\newtheorem{thm}{Theorem}[section]
\newtheorem{prop}[thm]{Proposition}
\newtheorem{cor}[thm]{Corollary}
\newtheorem{lemma}[thm]{Lemma}
\newtheorem{fact}[thm]{Fact}
\newtheorem*{th*}{Theorem}
\newtheorem*{prop*}{Proposition}
\newtheorem*{cor*}{Corollary}
\newtheorem*{lemma*}{Lemma} 
\newtheorem*{def*}{Definition}
\newtheorem*{rem*}{Remark}
\newtheorem*{fact*}{Fact}
\newcommand{\Hn}{\mathbb{H}^N}
\newcommand{\R}{\mathbb{R}}
\newcommand{\Z}{\mathbb{Z}}
\newcommand{\N}{\mathbb{N}}
\newcommand{\HH}{\mathcal{H}}
\newcommand{\GG}{\Omega}
\newcommand{\eps}{\varepsilon}
\newcommand{\E}{ \mathbb{E} }
\newcommand{\supp}{\text{supp }}
\newcommand{\tsigma}{\widetilde{\sigma}}
\newcommand{\GGb}{\overline{\GG}^B}
\newcommand{\BG}{X}
\newcommand{\x}{\overline{x}}
\newcommand{\y}{\overline{y}}
\newcommand{\LL}{\text{LL}}
\newcommand{\Pb}{\mathbb{P}}
\newcommand{\wconv}{\overset{dist}{\longrightarrow}}
\newcommand{\pconv}{\overset{prob}{\longrightarrow}}
\newcommand{\details}[1]{{\color{black}#1}}
\newcounter {subsubsubsection}[subsubsection]
\renewcommand\thesubsubsubsection{\thesubsubsection .\@alph\c@subsubsubsection}
\newcommand\subsubsubsection{\@startsection{subsubsubsection}{4}{\z@}%
                                     {-3.25ex\@plus -1ex \@minus -.2ex}%
                                     {1.5ex \@plus .2ex}%
                                     {\normalfont\large\bfseries}}
\newcommand*\l@subsubsubsection{\@dottedtocline{3}{10.0em}{4.1em}}
\newcommand*{\subsubsubsectionmark}[1]{}
\title{Winding of geodesic rays chosen by a harmonic measure}
\author{Timothée Bénard\thanks{
The author has received funding from the European Research
Council (ERC) under the European Union's Horizon 2020 research and
innovation programme (grant agreement No. 803711).}}
\date{}
\begin{document}

\maketitle

{
\abstract{We prove limit theorems for the homological winding of geodesic rays distributed via a harmonic  measure on a Gromov hyperbolic space. We obtain applications  to the inverse problem for the harmonic measure, and  winding statistics for Patterson-Sullivan measures. }}

\bigskip
\large

\tableofcontents
\bigskip


\section{Introduction} \label{Sec-intro}

\subsection{The inverse problem for the harmonic measure}

Let $\Omega$ be a proper geodesic hyperbolic metric space endowed with a basepoint $o$, and $\Gamma$ be a finitely generated subgroup of isometries of $\Omega$. Given a non-elementary probability measure $\mu$ on $\Gamma$, it is known that for $\mu^{\otimes \N^*}$-almost every $b=(b_{i})_{i\geq 1}\in \Gamma^{\N^*}$, the sample path $(b_{1}\dots b_{n}.o)_{n\geq 0}$ on $\Omega$ converges to a point $\xi_{b}$ in the Gromov boundary $\partial \Omega$. The distribution $\nu$ of the exit point $\xi_{b}$ is called the \emph{harmonic measure} or the \emph{Furstenberg measure} of generator $\mu$. It is the only probability measure on $\partial \Omega$ satisfying the stationarity equation
$$\mu*\nu=\nu$$
The properties of regularity of $\nu$ are very much studied \cite{Guivarch90, Li18} \cite{HochmanSolomyak17, Ledrappier83, LedrappierLessa21} \cite{Bourgain12, BQ18}, and can be used to  describe the asymptotic properties of $\mu$-sample paths on $\Omega$ \cite{BQRW, GuivarchLepage16, BQ16-CLThyp, Li18}.

The motivation for this text is to address the following inverse problem for the harmonic measure:

\bigskip
\noindent \emph{How much information on $\mu$ can be read in (the measure class of) its harmonic measure $\nu$ on the boundary?} 

\bigskip
We will  shed some light on this question by examining the homological winding of $\nu$-typical geodesic rays on $\Gamma\backslash \Omega$. As the statement of the main result requires a rather substantial introduction, we start with a few remarkable corollaries. The main result, as well as standard terminology concerning $\mu$, are  presented  in \Cref{Sec-main-result}. 

\bigskip

\noindent{\bf (I) Expectation and covariance of $\mu_{ab}$}. In the context of random walks on hyperbolic groups, we see that $\nu$ informs on the half lines spanned by the expectation and the covariance of $\mu$ in the abelianization.

     \begin{cor} \label{cor-grhyp}
Let $(\Gamma, d_{\Gamma})$ be a word hyperbolic group equipped with a word metric, $\mu$ a non-elementary measure on $\Gamma$, and $\nu$ its harmonic measure on the Gromov boudary $\partial \Gamma$.  Write $H_{\Gamma}$ the torsion-free part of the abelianization of $\Gamma$, and $\mu_{ab}$ the projection of $\mu$ to $H_{\Gamma}$.  
\begin{itemize}
\item If $\mu$ has finite first moment, then the measure class of $\nu$ determines 
$$\lambda_{\mu}^{-1}\E(\mu_{ab})  $$
where $\lambda_{\mu}>0$ is the rate of escape of the $\mu$-walk on $(\Gamma, d_{\Gamma})$. 

\item If $\mu$ has finite second moment and centered projection to $H_{\Gamma}$, then the measure class of $\nu$ determines 
$$\lambda_{\mu}^{-1}Cov(\mu_{ab})  $$
\end{itemize}
\end{cor}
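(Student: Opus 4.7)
The plan is to specialize the paper's main theorem (stated in \Cref{Sec-main-result}) to $\Omega=\Gamma$ equipped with the word metric $d_\Gamma$, $\Gamma$ acting on itself by left multiplication, and the abelianization $\pi_{ab}:\Gamma\to H_\Gamma$ playing the role of the winding cocycle. I expect the main theorem to provide, for a $\nu$-typical boundary point $\xi$, a LLN (and under the centering plus finite second moment assumption, a CLT) for the homological winding $\mathrm{wind}_L(\xi)$ of the geodesic ray $[o,\xi)$ up to length $L$:
$$\frac{1}{L}\,\mathrm{wind}_L(\xi)\xrightarrow[L\to\infty]{\mathrm{a.s.}}\vec v(\nu),\qquad \frac{1}{\sqrt L}\,\mathrm{wind}_L(\xi)\xrightarrow[L\to\infty]{\mathrm{law}}\mathcal N\bigl(0,\Sigma(\nu)\bigr),$$
with drift $\vec v(\nu)\in H_\Gamma\otimes\R$ and covariance $\Sigma(\nu)$ both $\nu$-a.s.\ constant, hence depending only on the measure class of $\nu$.

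The second step is to identify these invariants with the claimed expressions. For this I would invoke the standard tracking of a $\mu$-sample path by the geodesic toward its exit point: for $\mu^{\otimes \N^*}$-a.e.\ $b=(b_i)$ one has $d_\Gamma(b_1\cdots b_n\cdot o,\gamma_{\xi_b}(\lambda_\mu n))=o(n)$, and in fact $O(\log n)$ for word-hyperbolic groups under suitable moments. Since $\pi_{ab}$ is Lipschitz from $(\Gamma,d_\Gamma)$ to $H_\Gamma\otimes\R$, the winding of $[o,\xi_b)$ at length $L$ differs from $\pi_{ab}(b_1\cdots b_{n(L)})$ by a sublinear amount in $H_\Gamma\otimes\R$, where $n(L):=\lfloor L/\lambda_\mu\rfloor$.

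The sum $\pi_{ab}(b_1\cdots b_n)=\sum_{i=1}^n\pi_{ab}(b_i)$ is an i.i.d.\ random walk on the abelian group $H_\Gamma$ with step distribution $\mu_{ab}$. The classical LLN and (under the centering and second moment hypothesis) CLT give
$$\frac{\pi_{ab}(b_1\cdots b_n)}{n}\xrightarrow[n\to\infty]{\mathrm{a.s.}}\E(\mu_{ab}),\qquad \frac{\pi_{ab}(b_1\cdots b_n)}{\sqrt n}\xrightarrow[n\to\infty]{\mathrm{law}}\mathcal N\bigl(0,\mathrm{Cov}(\mu_{ab})\bigr).$$
Substituting $n=n(L)\sim L/\lambda_\mu$ and comparing with the conclusions of the main theorem forces $\vec v(\nu)=\lambda_\mu^{-1}\E(\mu_{ab})$ and $\Sigma(\nu)=\lambda_\mu^{-1}\mathrm{Cov}(\mu_{ab})$. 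Since the left-hand sides depend only on the measure class of $\nu$, the corollary follows.

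The main obstacle I anticipate is the CLT step, where the random-walk CLT at scale $\sqrt n$ must be transferred to an intrinsic CLT for geodesic windings at scale $\sqrt L$: this requires the tracking error in $H_\Gamma\otimes\R$ to be genuinely $o(\sqrt L)$. For word-hyperbolic groups this follows from the $O(\log n)$ deviation of sample paths from geodesics combined with the Lipschitz property of $\pi_{ab}$; however, formulating the main theorem so that it extracts bona fide measure-class invariants (rather than $\nu$-dependent quantities) from the CLT is the delicate point, and presumably absorbs the bulk of the work in the main body of the paper.
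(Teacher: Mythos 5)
Your overall plan -- specialize the main theorem with $\Omega=\Gamma$ and $i$ the (Lipschitz-extended) abelianization map, then identify the resulting constants via comparison with the i.i.d.\ random walk $\pi_{ab}(b_1\cdots b_n)=\sum_i\pi_{ab}(b_i)$ -- is indeed the paper's, and the first bullet (the LLN and $e_\nu=\lambda_\mu^{-1}\E(\mu_{ab})$) goes through exactly as you describe. The covariance statement, however, has two genuine gaps, both of which the paper addresses explicitly.

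First, you claim $d_\Gamma(b_1\cdots b_n\cdot o,\,\gamma_{\xi_b}(\lambda_\mu n))=O(\log n)$. This is false: the geodesic-tracking estimates (\Cref{geod-track}) compare $b_1\cdots b_n\cdot o$ with $\gamma_{\xi_b}(t_{b,n})$, where $t_{b,n}=d_\Gamma(o,b_1\cdots b_n\cdot o)$, and $|t_{b,n}-\lambda_\mu n|$ fluctuates at scale $\sqrt n$, not $\log n$. Hence the error between $\pi_{ab}(b_1\cdots b_n)$ and $\mathrm{wind}_{\lambda_\mu n}(\xi_b)$ is a priori $O(\sqrt n)$, which is \emph{not} negligible at the CLT scale $\sqrt n$. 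This is precisely the obstruction the paper flags in its strategy section, and it is resolved by replacing the deterministic time $n$ with the exit-time stopping time $\tau_n$ (so one compares with $\pi_{ab}(b_1\cdots b_{\tau_n})$), and invoking an Anscombe-type CLT along stopping times (\Cref{general-CLT+}). Your argument as written would not deliver a CLT for geodesic windings at the correct normalization.

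Second, you write that the CLT covariance is ``$\nu$-a.s.\ constant, hence depending only on the measure class of $\nu$.'' But the CLT is a distributional statement about the family of laws $\bigl(\tfrac{i\circ r_\xi(t)-te_\nu}{\sqrt t}\bigr)_{t}$ under $\nu$; it does not produce an almost-sure invariant, and weak convergence under $\nu$ does not automatically persist under an equivalent measure $\nu'$. The paper instead deduces measure-class invariance of $A_\nu$ from the LIL, which is a genuine pointwise ($\nu$-a.e.) statement: the accumulation set $A_\nu^{1/2}(\overline B(0,1))$ is $\nu$-a.s.\ constant, hence unchanged if one replaces $\nu$ by an equivalent harmonic measure, and the ellipsoid determines the matrix. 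You acknowledge a ``delicate point'' at the end, but locate it in the wrong place -- the fix is to use the LIL, not to refine the tracking.
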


In the above statement, $\E(\mu_{ab}), Cov(\mu_{ab})$ are respectively the expectation and the covariance of $\mu_{ab}$ in the real vector space spanned by the free abelian group $H_{\Gamma}$. These quantities would not make sense on the torsion part of $\Gamma/[\Gamma, \Gamma]$. 

It is remarkable that these formulas  link objects which have very different origins. The rate of escape $\lambda_{\mu}$ arises from  hyperbolicity, and depends strongly on the  word distance $d_{\Gamma}$ on $\Gamma$. In contrast,  $\mu_{ab}$ is independent from the metric and lives in a commutative setting.  

\Cref{cor-grhyp} is a direct consequence of \Cref{laws-projected} below and the formulas for the drift and the limit covariance.

 \bigskip
 \noindent{\bf (II) Non-centered walks are singular at infinity}.  
 Assume $\Omega$ is the real hyperbolic space $\Hn$ of dimension $N\geq 2$, and $\Gamma$ is  a  convex cocompact non-elementary discrete subgroup of orientation preserving isometries. An established question is to understand the properties of measures $\mu$ for which $\nu$ is in the class of Patterson-Sullivan on $\partial \Hn$. Note that such measures do exist by a result of  Connell-Muchnik \cite{ConnellMuchnik07}, and can even be chosen with finite exponential moment by results of Lalley and Li \cite{Lalley86, LiNaudPan21}. An active conjecture, commonly attributed to Kaimanovich-Le Prince, states that $\mu$ cannot be finitely supported \cite{KaimanovichLePrince11, Kosenko21,  KosenkoTiozzo20}. We prove here that $\mu$ must be centered. 
 
\begin{cor}\label{non-centered-sing}
  Let $\Gamma\leq Isom^+(\Hn)$ be a  convex cocompact discrete subgroup,  $\mu$  a non-elementary probability measure on $\Gamma$ with finite first moment in a word metric. If the harmonic measure of $\mu$ is in the class of Patterson-Sullivan, then the projection of $\mu$ to the torsion-free part of $\Gamma/[\Gamma, \Gamma]$ is centered. 
     \end{cor}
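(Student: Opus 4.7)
The plan is to apply the first point of Corollary~\ref{cor-grhyp} to reduce the problem to showing that the vector $v := \lambda_\mu^{-1} \E(\mu_{ab}) \in H_\Gamma \otimes \R$ vanishes whenever $\nu$ lies in the Patterson-Sullivan measure class. Since by that corollary $v$ depends only on the measure class of $\nu$, it is enough to compute $v$ when $\nu$ equals (a fixed representative of) the Patterson-Sullivan measure $\nu_{PS}$. The main theorem of the paper, anticipated in Section~\ref{Sec-main-result}, realizes $v$ as the $\nu$-almost sure asymptotic speed at which a random geodesic ray $[o,\xi)$ winds around $\Gamma\backslash\Hn$ in the abelianization; it therefore suffices to show that $\nu_{PS}$-typical rays have vanishing winding speed.

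For this I would pass to the Bowen-Margulis-Sullivan measure $m_{BMS}$ on $T^1(\Gamma\backslash\Hn)$. Because $\Gamma$ is convex cocompact, $m_{BMS}$ has finite mass, is ergodic under the geodesic flow, and is preserved by the flip involution $w\mapsto -w$. The infinitesimal winding cocycle, whose integral along an orbit records its net displacement in $H_\Gamma\otimes\R$, is odd under this flip. Hence Birkhoff's ergodic theorem applied to the geodesic flow forces a constant almost-sure winding speed, equal to the $m_{BMS}$-integral of this cocycle, which vanishes by antisymmetry.

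It then remains to transfer this vanishing from $m_{BMS}$-typical tangent vectors on $T^1(\Gamma\backslash\Hn)$ to rays $[o,\xi)$ for $\nu_{PS}$-typical $\xi$. Using the disintegration of $m_{BMS}$ along strong stable leaves into measures locally in the class of $\nu_{PS}$, together with Sullivan's shadow lemma, one sees that the two almost-sure limits coincide. The winding speed of $\nu_{PS}$-typical rays is therefore zero, which forces $\E(\mu_{ab}) = 0$. This last transfer is the main technical point, although standard in the convex cocompact setting; the conceptual content of the argument is really the flip symmetry of $m_{BMS}$ combined with Corollary~\ref{cor-grhyp}.
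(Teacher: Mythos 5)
Your proof is correct and takes the same route as the paper: identify $\lambda_\mu^{-1}\E(\mu_{ab})=e_\nu$ via the LLN of \Cref{laws-projected}, then show $e_{\nu_o}=0$ by applying Birkhoff's theorem to the flip-invariant, ergodic BMS measure (\Cref{zero-drift}). The transfer from $m_{BMS}$-typical vectors to $\nu_o$-typical endpoints is in fact lighter than you suggest: the $v^+$-marginal of $\widetilde{m}_{BMS}$ is equivalent to $\nu_o$ directly from the Hopf-coordinate product formula, with no need for a stable-leaf disintegration or the shadow lemma.
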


\Cref{non-centered-sing} is a direct consequence of \Cref{laws-projected} and \Cref{zero-drift} below.

\bigskip
 \noindent{\bf (III)  Walks with cusps are singular at infinity}.
Furstenberg's discretization of the Brownian motion  \cite{Furstenberg71, Furstenberg73} implies that any (possibly non-uniform) lattice of $PSL_{2}(\R)$ is the support of a probability measure $\mu$ whose harmonic measure is Lebesgue on $\partial \mathbb{H}^2\simeq S^1$. The proof gives $\mu$ with finite first moment in the metric induced by $PSL_{2}(\R)$.  It came thus as a surprise when Guivarc'h-Le Jan  \cite{GuiLeJ90, GuiLeJ93} proved twenty years later that $\mu$ cannot have finite first moment in the \emph{word metric} of $\Gamma$ when $\Gamma$ is the congruence subgroup $\Gamma(2)\leq PSL_{2}(\Z)$. The result was extended step by step to any lattice  $\Gamma\leq PSL_{2}(\R)$ containing a parabolic element \cite{DKN09, GadreMaherTiozzo15}. Further generalisations were made in  \cite{RandeckerTiozzo21, GekhtmanTiozzo20}, notably the case where $\Gamma\leq PSL_{2}(\R)$ is  any discrete finitely generated subgroup with a parabolic element, which was solved under the additional assumptions that $\mu$ generates $\Gamma$ as a semigroup and has superexponential moment \cite{GekhtmanTiozzo20}. We show those two  assumptions are unnecessary. 
 
  \begin{cor} \label{singularity}
 Let $\Gamma\subseteq PSL_{2}(\R)$ be a finitely generated discrete subgroup containing a parabolic element. Let $\mu$ be  a non-elementary  probability measure on $\Gamma$ with finite first moment in a word metric. Then the Furstenberg measure of $\mu$ on $\partial \mathbb{H}^2$ is singular with the class of Patterson-Sullivan. 
\end{cor}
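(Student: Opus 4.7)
The strategy is to argue by contradiction: assuming that $\nu$ lies in the Patterson-Sullivan measure class, I aim to contradict the finite first moment of $\mu$ in a word metric of $\Gamma$. The ergodicity of $\Gamma$ on $\partial \mathbb{H}^{2}$ and the $\Gamma$-quasi-invariance of both $\nu$ and the PS class reduce this to assuming that $\nu$ is equivalent to a representative $\lambda$ of the PS class, so that $\nu$-typical and PS-typical geodesic rays coincide.

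A key preliminary step is a reduction to a finite-index subgroup: since the torsion-free abelianization $H_{\Gamma}$ may be trivial (for instance, $H_{PSL_{2}(\mathbb{Z})} = 0$), I replace $\Gamma$ by a finite-index normal subgroup $\Gamma_{0}$ such that some conjugate $p_{0}$ of the parabolic $p$ projects to a non-zero class $\bar{p}_{0} \in H_{\Gamma_{0}}$; such a $\Gamma_{0}$ always exists for finitely generated discrete subgroups of $PSL_{2}(\mathbb{R})$ containing a parabolic, by residual finiteness combined with the known structure of their abelianizations. The induced walk $\mu_{0}$ on $\Gamma_{0}$ retains finite first moment in a word metric (by Kac's lemma), remains non-elementary, and has Furstenberg measure $\nu_{0}$ in the same class as $\nu$, hence in the PS class of $\Gamma_{0}$.

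Applying \Cref{laws-projected} to $(\Gamma_{0}, \mu_{0})$ then yields a limit law for the homological winding along $\nu_{0}$-typical geodesic rays under the finite first moment assumption, with explicit parameters expressed in terms of $\mu_{0}$. Testing this limit law against a linear functional $\varphi$ dual to $\bar{p}_{0}$ produces a precise constraint on the distribution of the $\bar{p}_{0}$-component of the winding of a PS-typical ray, since $\nu_{0}$ is equivalent to PS. The contradiction then comes from the explicit asymptotic behavior of this $\bar{p}_{0}$-winding under Patterson-Sullivan, which is governed by cusp excursions: the parabolic $p_{0}$ translates by a fixed amount along the horocycle at $\xi_{p_{0}}$, so a geodesic excursion of hyperbolic depth $d$ contributes a $\bar{p}_{0}$-winding of order $e^{d}$; combined with Sullivan's logarithm law for the depth of cusp excursions, this produces a stable-law type (Guivarc'h-Le Jan flavor) behavior for the $\bar{p}_{0}$-winding along PS-typical rays, incompatible with the limit law of \Cref{laws-projected} under the finite first moment hypothesis.

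The main obstacle I anticipate is the precise matching between the limit law produced by \Cref{laws-projected} and the heavy-tailed behavior of the $\bar{p}_{0}$-winding under PS near the parabolic fixed point: one needs a sharp enough estimate on the $\bar{p}_{0}$-winding accumulated by a single cusp excursion to rule out compatibility with a bounded-first-moment limit law, rather than merely with bounded variance. A secondary technicality lies in the finite-index reduction, where one must verify that the induced walk $\mu_{0}$ inherits all the required properties (finite first moment in a word metric of $\Gamma_{0}$, non-elementarity, and Furstenberg measure in the prescribed class) and that the PS classes of $\Gamma$ and $\Gamma_{0}$ indeed coincide on $\partial \mathbb{H}^{2}$.
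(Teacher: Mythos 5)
Your overall strategy matches the paper: pass to a finite-index subgroup so that the parabolic has non-trivial homology, define the homological winding map $i$ dual to the cusp, apply a limit law for $\nu$-typical rays, and contradict the heavy-tailed winding statistics of Patterson-Sullivan-typical rays around the cusp. The reduction step is also in the spirit of the paper's \Cref{singularity-reduction} (Selberg for torsion-freeness, possibly a $2$-cover to ensure $p\geq 2$ punctures so the cusp is non-trivial in homology, induction of $\mu$ to the finite-index subgroup with finite first moment preserved via a Wald/Kac argument).

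However there is a concrete gap in the middle step: you cite \Cref{laws-projected}, but that theorem requires $i$ to be \emph{quasi-Lipschitz}, which fails precisely in the cusped situation you are in. As the paper points out in Example 2 of \Cref{Sec-main-result}, if a closed $1$-form has non-zero period on a loop shrinkable into the cusp, the primitive $i$ is only \emph{locally bounded} and cannot be quasi-Lipschitz. The entire reason \Cref{laws-projected-weak} exists as a separate statement with the weaker ``locally bounded'' hypothesis (and correspondingly a weak LLN and CLT only) is to cover this case; the paper's proof applies the WLLN from \Cref{laws-projected-weak}. As stated, your invocation of \Cref{laws-projected} does not apply and the argument breaks. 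Relatedly, the paper does not attempt to rederive the PS-side statistics from Sullivan's logarithm law: it cites Enriquez--Franchi--Le Jan directly, who prove the winding of PS-typical rays satisfies a non-degenerate limit law at scale $t^{1/(2\delta-1)}$ with a two-sided stable distribution (here $\delta\in(1/2,1]$, so this scale is at least linear). The WLLN says $i\circ r_\xi(t) - te_\nu$ concentrates at scale $o(t)$ under $\nu$, while the stable limit says it spreads at scale $\geq t$ under PS; the incompatibility of these two facts gives mutual singularity directly, without framing the contradiction around finiteness of the first moment of $\mu$. Your proposal would become correct if you swap \Cref{laws-projected} for \Cref{laws-projected-weak} (using the WLLN) and replace the sketchy logarithm-law derivation with the Enriquez--Franchi--Le Jan stable CLT.
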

 
The proof of \Cref{singularity} is given in \Cref{Sec-sing}. It relies on \Cref{laws-projected-weak} and a result of Enriquez-Franchi-Le Jan \cite{EnriquezFranchiLeJan01}.

 \subsection{Homological winding of $\nu$-typical rays}
\label{Sec-main-result}

We address the inverse problem for the harmonic measure by reading information about $\mu$ in  the winding statistics of $\nu$-typical geodesic rays on $\Gamma\backslash \Omega$. More precisely, for every $\xi\in \partial \Omega$, call $r_{\xi}:\R^+\rightarrow \Omega$ a geodesic ray   from $o$ to $\xi$ on $\Omega$. Letting $\xi$ vary with law $\nu$,  we obtain a family of random segments $(r_{\xi}([0,t]))_{t\in \R^+}$ indexed by the time parameter. We prove  limit laws for the winding  of $(r_{\xi}([0,t]))_{t\in \R^+}$ on $\Gamma \backslash \Omega$.  In those limit theorems, natural numerical quantities appear (drift, limit covariance) and have explicit formulas involving $\mu$. Those formulas only depend on $\nu$ so they yield numerical invariants for all possible generators $\mu$ of $\nu$.  This strategy is motivated by the note of Guivarc'h-Le Jan  \cite{GuiLeJ90}. It is also related to Gadre-Maher-Tiozzo \cite{GadreMaherTiozzo15} and Randecker-Tiozzo \cite{RandeckerTiozzo21}, as we discuss below.

 \subsection*{Homological projection}

The winding number of a geodesic segment $[o,z]$ on $\Gamma \backslash \Omega$ will be formally incarnated by a vector $i(z)$ where $i$ is a $\Gamma$-equivariant map from $\Omega$ to a vector space. Examples below justify the relation with homology and the term winding.  

 \begin{def*}
 A  measurable map  $i: \Omega\rightarrow \R^d$ ($d\geq 1$) is   \emph{$\Gamma$-equivariant} if there exists a  morphism $\pi: \Gamma\rightarrow \R^d$ satisfying  for all $\gamma\in \Gamma$, $x\in \Omega$, 
 $$i(\gamma.x)=\pi(\gamma)+i(x) $$
 \end{def*}
 \noindent Note that $\pi$ is unique.  

\bigskip

We  consider two natural  levels of regularity on $i$:

\begin{itemize}
\item
We say $i$ is \emph{locally bounded} if for every  ball $B$ of $\Omega$, we have $\sup_{B} \|i\|<\infty$.  
 \item
 We say $i$ is \emph{quasi-Lipschitz} if there exists $R>0$ such that for all $x,y\in \Omega$, we have  $$\|i(x)-i(y)\|\leq R\,d_{\Omega}(x,y)+R $$
 \end{itemize}

\noindent{\bf Examples}. 
\bigskip

1. (\emph{Abelianization of hyperbolic group}).  Let $\Gamma$ be a word hyperbolic group, and $\pi : \Gamma\rightarrow \R^d$ any group morphism. Given any Cayley graph $\Omega$ of $\Gamma$, we can extend $\pi$ into $i:\Omega\rightarrow \R^d$, say by \details{affine} interpolation. The regularity of $i$ is always Lipschitz.

\bigskip
2. (\emph{Homological winding on Gromov hyperbolic manifolds}).  Let $\Omega$ be a simply connected complete Riemannian manifold which is Gromov hyperbolic,  and $\Gamma$  a subgroup of isometries of $\Omega$. Given a basepoint $o\in \Omega$ and a family ${\omega}_{1}, \dots, {\omega}_{d}$ of $\Gamma$-invariant closed  $1$-forms on $\Omega$, we can  define a 
$\Gamma$-equivariant map $i:\Omega\rightarrow \R^d$ by setting
$$i(z)=\left(\int_{[o,z]}{\omega}_{1}\,,\, \dots \,,\,  \int_{[o,z]}{\omega}_{d}\right)$$
where $[o,z]$ stands for any smooth path from $o$ to $z$. 

Observe that  $i$ is smooth,  whence locally bounded. It can be quasi-Lipschitz, for instance if  $\sup_{x\in \Omega, i\leq d} \|\omega_{i}\|_{x}<\infty$. However, if $\Gamma\backslash \Omega=M$ is a manifold and some $\omega_{i}$ has non zero integral on a loop $l$ of $M$ that may be shrinked arbitrarily by homotopy, then $i$ cannot be quasi-Lipschitz. \details{Indeed, first note that $\omega_{i}$ has the same integral $c\neq 0$ on all the homotopic deformations of $l$ in $M$, then for any $\eps>0$, choose such a deformation of length at most $\eps$, lift the path following this loop $\lfloor \eps^{-1}\rfloor$ times  into a path in $\Omega$ with endpoints $z, z'$  satisfying $d(z, z') \leq 1$, and observe however that $\|i(z)-i(z')\| \geq \lfloor \eps^{-1}\rfloor c \neq 0$ can be made arbitrarily large by choosing $\eps$ small, thus causing $i$ to be non quasi-Lipschitz.} For instance, this occurs if $M$  admits a cusp containing a loop with non trivial homology, as in the hyperbolic $3$-horned  sphere (but not the once-punctured torus). 

When $M=\Gamma\backslash \Omega$ is a manifold, the vector $i(z)$ is related to the \emph{homological winding} of the path $[o,z]$ projected to $M$. Indeed, assume that $\omega_{1}, \dots, \omega_{d}$ project to a basis of $H^1(M, \R)$. This yields a representative $\omega$ for any cohomology class in $H^1(M,\R)$, that we see as a $\Gamma$-invariant closed $1$-form on $\Omega$. Using that the integral pairing $H^1(M, \R)\times H_{1}(M, \R)\rightarrow \R $ is non-degenerate (De Rham Theorem), we then have 
$$i : \Omega \rightarrow H_{1}(M, \R), \,\,z\mapsto \int_{[o,z]} $$
\details{Here we use that we specified representatives for a basis of $H^{1}(M, \R)$ because the integral of an element of $H^{1}(M, \R)$ is not well defined on a arbitrary path in $M$ or $\Omega$.} A priori, $i$ depends on the choice of basepoint $o$ and on the basis representatives $\omega_{i}$. However, any other choice will lead to a $\Gamma$-equivariant map $i'$ sharing the same  morphism $\pi : \Gamma\rightarrow H_{1}(M, \R)$. In this sense, the construction is canonical. The theorems below will not distinguish  $i$ from $i'$.

\bigskip
 \noindent{\bf Historical comments}.  The homological winding for the geodesic flow on a negatively curved manifold has been thoroughly investigated from the point of view of \emph{invariant measures} (say Liouville measure or Bowen-Margulis-Sullivan  measures). For the compact case, see \cite{Sinai60, Ratner73CLT, DenkerPhilipp84}. In presence of cusps, see   \cite{GuiLeJ93, LeJan94}.  Homological winding is also related to the problem of counting closed geodesics as in \cite{KatsudaSunada90}. A more precise form of winding, involving $1$-forms which are only closed in a neighborhood of the cusps, is studied in \cite{EnriquezLeJan97, EnriquezFranchiLeJan01, Franchi99, BabillotPeigne06}. 
 
Homological winding for rays chosen with a \emph{Furstenberg measure} has not been much studied. It is hinted at in the seminal note of Guivarc'h-Le Jan \cite{GuiLeJ90}. It is also related to the works of Gadre-Maher-Tiozzo \cite{GadreMaherTiozzo15} and Randecker-Tiozzo \cite{RandeckerTiozzo21} who consider a typical geodesic ray on a hyperbolic manifold and show that the non-oriented winding around a given cusp satisfies a law of large numbers with finite mean. In this paper, we establish a wide range of limit theorems for the homological winding of Furstenberg-typical geodesic rays. Contrary to \cite{GadreMaherTiozzo15, RandeckerTiozzo21} which rely on Kingman's subbaditive ergodic theorem, we compare directly  geodesic rays with an abelian random walk (or  martingale in the non-centered case) in the homology group. This new approach allows a  precise control of the winding statistics and to free oneself from explicit geometrical computations, yielding with no extra cost the case of an arbitrary hyperbolic geodesic metric space. This  level of generality, notably describing abelianized geodesic rays in the context of a word hyperbolic group, does not seem to have prior history.


 \subsection*{Statistics of $\nu$-typical rays}

We now present the main result of the paper, describing  the winding statistics of geodesic rays on $\Gamma\backslash \Omega$ chosen by a Furstenberg measure $\nu$ of generator $\mu$ supported by $\Gamma$. Several conditions  on $\mu$ will come up:

 \begin{itemize}
  \item  $\mu$ is \emph{non-elementary} if the semigroup generated by its support  contains two hyperbolic isometries with disjoint sets of fixed points.

 \item  $\mu$ has  \emph{finite $p$-th moment}  if  $\sum_{\gamma\in \Gamma} d_{\Gamma}(e, \gamma)^p \mu(\gamma)<\infty$ where $d_{\Gamma}$ is any word distance on $\Gamma$.
 
 \item $\mu$ has \emph{finite exponential moment} if  $\exists s>0$ such that $\sum_{\gamma \in \Gamma}e^{s d_{\Gamma}(e, \gamma)}\mu(\gamma)<\infty$. 
 \end{itemize}

We divide our result in two statements, assuming  different levels of  regularity for the projection.

\bigskip
\Cref{laws-projected} assumes  $i$ to be quasi-Lipschitz. This situation occurs if $i$ is the projection of a hyperbolic group to its abelianization, or the homological projection for some convex cocompact hyperbolic manifold. We prove a law of large numbers (LLN), and a central limit theorem (CLT), a law of the iterated logarithm (LIL), a principle a large deviations (PLD), and a gambler's ruin estimate (GR).

\begin{thm}[Limit laws for  geodesic rays I] \label{laws-projected}  Let $\Omega$ be a proper  geodesic hyperbolic metric space \details{with a basepoint},  and $\Gamma$  a finitely generated subgroup of isometries of $\Omega$.  Fix a \emph{quasi-Lipschitz} $\Gamma$-equivariant map  $i: \Omega\rightarrow \R^d$. Let $\nu$ be a Furstenberg measure on $\partial \Omega$ for some  non-elementary probability measure  $\mu$ on $\Gamma$.
 
\begin{itemize}
\item \emph{(LLN)} If $\mu$ has  finite first moment, then there is a vector $e_{\nu}\in \R^d$ such that for $\nu$-almost every $\xi \in \partial \Gamma$, 
$$\frac{  i \circ r_{\xi}(t)}{t} \underset{t\to+\infty}{\longrightarrow} e_{\nu}$$

\item \emph{(CLT)}  If $\mu$ has  finite second  moment, then as $\xi$ varies with law $\nu$, we have the convergence in distribution, 
\details{$$\frac{i \circ r_{\xi}(t) - t e_{\nu}}{\sqrt{t}} \underset{t\to+\infty}{\wconv}  \mathscr{N}(0, A_{\nu}) $$
}for some  positive semidefinite symmetric  matrix $ A_{\nu} \in M_{d}(\R)$.

\item \emph{(LIL)} If $\mu$ has  finite second  moment, then for $\nu$-almost every $\xi \in \partial \Gamma$, 
$$\left\{\text{ Accumulation points of } \left(\frac{i\circ r_{\xi}(t) - te_{\nu}}{\sqrt{2t \log \log t}}\right)_{t>2}  \right\} = A^{1/2}_{\nu}(\overline{B}(0,1)) $$
where $\overline{B}(0,1)=\{t \in \R^d, \sum_{i}t_{i}^2\leq 1\}$.

\item \emph{(PLD)}  If $\mu$ has  finite exponential  moment, then for all $\alpha>0$, there exists $D, \delta>0$ such that  
$$\nu \{\xi : \|i \circ r_{\xi}(t)- te_{\nu} \|\geq \alpha t \}\leq De^{-\delta t} $$

\item \emph{(GR)}   If $\mu$ has  finite exponential  moment, $d=1$, and $A_{\nu}>0$, then for any $k,l>0$ and any $(k_{s})_{s> 0},(l_{s})_{s> 0} \in \R^{\R_{>0}}$ such that $k_{s}\sim sk$, $l_{s}\sim sl$, we have 
$$\nu \left\{\xi : (i\circ r_{\xi}(t) -t e_{\nu})_{t\geq 0} \text{ leaves  $[-k_{s}, l_{s}]$ via $(l_{s},+\infty)$}   \right\} \,\underset{s\to+\infty}{\longrightarrow}\, \frac{k}{k+l}$$
\end{itemize}
\end{thm}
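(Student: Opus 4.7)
The plan is to couple the $\nu$-distributed boundary point $\xi$ with a sample path $(x_n)_{n \geq 0}=(b_1 \cdots b_n . o)_{n\geq 0}$ of the $\mu$-walk whose exit point is $\xi$, and to transfer limit theorems from the abelianized walk
\[
S_n := \pi(b_1 \cdots b_n)=\sum_{k=1}^n \pi(b_k) \in \R^d
\]
to the continuous process $t\mapsto i \circ r_\xi(t)$. Because $\pi : \Gamma \to \R^d$ is a group morphism and $b_1, b_2, \ldots$ are i.i.d., the steps $(\pi(b_k))_k$ are themselves i.i.d.\ with law $\pi_*\mu$. The $\Gamma$-equivariance of $i$ gives $i(x_n)=S_n+i(o)$, so any question on $(i(x_n))_n$ reduces to a classical question about i.i.d.\ sums in $\R^d$. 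The required moment on $\pi_*\mu$ follows automatically from the corresponding word-metric moment on $\mu$, since $\pi$ is Lipschitz from $(\Gamma, d_\Gamma)$ to $\R^d$.

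\textbf{From walk to geodesic ray.} The bridge is a tracking estimate: under a moment condition on $\mu$, the position $x_n$ lies close, at an appropriate scale, to $r_\xi(\lambda_\mu n)$, where $\lambda_\mu > 0$ is the drift of the $\mu$-walk on $(\Omega, d_\Omega)$. The quasi-Lipschitz assumption on $i$ then yields
\[
\|i(x_n) - i \circ r_\xi(\lambda_\mu n)\| \leq R\, d_\Omega\bigl(x_n, r_\xi(\lambda_\mu n)\bigr) + R,
\]
converting a metric tracking bound into a comparison of the two processes of interest. Since $r_\xi$ has unit speed, the quasi-Lipschitz bound also gives $\|i \circ r_\xi(t)-i \circ r_\xi(t')\|\leq R|t-t'|+R$, which handles interpolation between the discrete times $\lambda_\mu n$ and general $t \geq 0$.

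\textbf{The five statements.} Each item pairs a tracking estimate at the appropriate scale with a classical limit law for i.i.d.\ sums. For the LLN, I would use sublinear tracking $d_\Omega(x_n, r_\xi(\lambda_\mu n))=o(n)$ almost surely together with Kolmogorov's LLN, obtaining $e_\nu := \E(\pi_*\mu)/\lambda_\mu$. For the CLT, I would use tracking at scale $o(\sqrt{n})$ in probability (available under finite second moment from deviation inequalities \`a la Mathieu-Sisto) and the multivariate CLT for $(S_n - n\E(\pi_*\mu))/\sqrt{n}$, reading off $A_\nu = \mathrm{cov}(\pi_*\mu)/\lambda_\mu$. For the LIL, I would combine almost sure tracking at scale $o(\sqrt{n\log\log n})$ with Hartman-Wintner's multivariate LIL. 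For the PLD, finite exponential moment yields exponential concentration of the tracking error (\`a la Gou\"ezel), which together with Cram\'er's theorem gives the desired exponential upper bound. For the GR, the same exponential tracking reduces the exit problem to the classical gambler's ruin asymptotic for a one-dimensional random walk with drift exiting the linearly growing interval $[-k_s, l_s]$.

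\textbf{Main obstacle.} The hard part will be obtaining tracking estimates at exactly the right scale and tail, and controlling the joint law of $\xi$ and the sample path after conditioning on $\xi_b=\xi$. The almost sure tracking needed for the LIL is the most delicate; and for the GR one additionally needs to check that the tracking error is negligible compared to the linearly growing thresholds $k_s, l_s$, and that the exit location behaves as for the underlying one-dimensional walk $(S_n)$ conditioned to produce the exit point $\xi$. Once the tracking estimates are in place, every item is a reduction to a well-known limit theorem for i.i.d.\ sums in $\R^d$.
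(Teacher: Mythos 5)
There is a genuine gap in your CLT/LIL/GR arguments, and it is precisely the subtlety the paper points out. Your comparison chain reads $i\circ r_\xi(\lambda_\mu n)\approx i(x_n)=S_n+i(o)$, but the tracking estimates only give $i(x_n)\approx i\circ r_\xi(t_{b,n})$ with $t_{b,n}=d_\Omega(o,x_n)$. The random time $t_{b,n}$ fluctuates around $n\lambda_\mu$ at scale $\sqrt{n}$ --- exactly the CLT scale. Since $i\circ r_\xi$ has drift $e_\nu$, the discrepancy $i\circ r_\xi(n\lambda_\mu)-i\circ r_\xi(t_{b,n})\approx -(t_{b,n}-n\lambda_\mu)\,e_\nu$ is an $O(\sqrt{n})$ contribution that survives after dividing by $\sqrt{n}$, and it is \emph{correlated} with $S_n$. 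Consequently, your claimed formula $A_\nu=\mathrm{Cov}(\mu_{ab})/\lambda_\mu$ is wrong whenever $e_\nu\neq 0$. The paper's formula $\lambda_\mu A_\nu=\lim_n \tfrac{1}{n}\int(\pi(\gamma)-\sigma(\gamma^{-1},x)e_\nu)\,{}^{t}(\cdots)\,d\mu^{*n}$ carries the cross-term via the Busemann cocycle $\sigma$. Lemma \ref{exampleAnu} gives an explicit counterexample: for $\Gamma=F_2$ and $\mu=\tfrac{1}{3}(\delta_u+\delta_{uv}+\delta_{uv^{-1}})$, one has $\mathrm{rank}\,\mathrm{Cov}(\mu_{ab})=1$ but $\mathrm{rank}\,A_\nu=2$, so no rescaling of $\mathrm{Cov}(\mu_{ab})$ can equal $A_\nu$.

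To repair the argument, the paper does not compare at deterministic times but at the stopping times $\tau_s=\inf\{k:t_{b,k}\geq s\lambda_\mu\}$, so that $S_{\tau_s}$ approximates $i\circ r_\xi(s\lambda_\mu)$ with bounded error via the deviation inequality (Theorem \ref{deviation}) together with the overshoot control (Proposition \ref{control-tau}). Because $\tau_s$ is itself random at scale $\sqrt{s}$, one is no longer summing i.i.d.\ increments to a fixed index: one must prove a CLT/LIL/GR for a cocycle evaluated at a tight family of stopping times, which is what Appendix \Cref{Sec-general-laws} (in particular \Cref{general-CLT+}, \Cref{general-LIL}, \Cref{general-GR}) supplies, working with the martingale built from $\tilde\sigma(g,x)=\pi(g)+\sigma_0(g,x)e_\nu$ rather than with an i.i.d.\ walk. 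Your LLN and PLD items are unaffected: at linear scale the time fluctuation $|t_{b,n}-n\lambda_\mu|=o(n)$ (resp.\ exponentially small in probability) is negligible, and indeed the paper's LLN proof proceeds along your lines, while the PLD proof explicitly bounds the time-fluctuation term via the large-deviation estimate for $d_\Omega(o,x_n)$. But the second-order statements genuinely require the stopping-time/martingale machinery in the non-centered case, and your proposal does not contain the idea that resolves this.
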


 \bigskip
 \noindent{\bf Remarks}. 
 \details{1) We recall that the notation $r_{\xi}$ refers to a choice of geodesic ray from the basepoint of $\Omega$ to $\xi\in \partial \Omega$. The theorem does not depend on the choice geodesics rays as any two geodesic rays on $\Omega$ with same starting point and same exit point in the Gromov boundary remain at distance bounded by a constant depending only on $\Omega$.}
 
 2)  In the second statement, we wrote  $\mathscr{N}(0, A_{\nu})$ for the (possibly degenerate) centered gaussian measure on $\R^d$ of covariance matrix $ A_{\nu}$.

 3)  The LLN and the LIL imply that the mean $e_{\nu}$ and the covariance matrix $ A_{\nu}$ only depend on the measure class of $\nu$. We will also give explicit formulas in terms of $\mu$.

 \bigskip

  \Cref{laws-projected-weak}  below is  more general, for  it only assumes  $i$ to be  locally bounded.  As observed earlier in Example 2., it is particularly relevant when $i$ is the  homological projection to a hyperbolic manifold with a cusp. We prove a weak law of large numbers (WLLN), and a  central limit theorem (CLT) encapsulating that of   \Cref{laws-projected}.

\begin{thm}[limit laws for  geodesic rays II] \label{laws-projected-weak} Let $\Omega$ be a proper geodesic hyperbolic metric space \details{with a basepoint},  and $\Gamma$  a finitely generated subgroup of isometries of $\Omega$.  Fix a \emph{locally bounded} $\Gamma$-equivariant map  $i: \Omega\rightarrow \R^d$. Let $\nu$ be a Furstenberg measure on $\partial \Omega$ for some  non-elementary probability measure  $\mu$ on $\Gamma$.\begin{itemize}
\item \emph{(WLLN)} If $\mu$ has  finite first moment,  then as $\xi$ varies with law $\nu$, we have the convergence in probability 
\details{$$\frac{  i \circ r_{\xi}(t)}{t} \underset{t\to+\infty}{\pconv} e_{\nu} $$
}for some  vector $e_{\nu}\in \R^d$.

\item \emph{(CLT)}  If $\mu$ has  finite second  moment, then as $\xi$ varies with law $\nu$, we have the convergence in distribution, 
\details{$$\frac{i \circ r_{\xi}(t) - t e_{\nu}}{\sqrt{t}} \underset{t\to+\infty}{\wconv}  \mathscr{N}(0, A_{\nu}) $$
}for some  positive semidefinite symmetric  matrix $ A_{\nu} \in M_{d}(\R)$.
\end{itemize}
 \end{thm}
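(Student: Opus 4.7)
The plan is to couple $\nu$ with its generating walk via the exit map $b\mapsto \xi_b$, which sends $\mu^{\otimes \N^*}$ to $\nu$, and to compare $i\circ r_{\xi_b}(t)$ with the discrete random walk trace $Z_n := i(\gamma_n\cdot o)$ where $\gamma_n = b_1\cdots b_n$. By $\Gamma$-equivariance of $i$,
\[
Z_n = i(o) + \pi(\gamma_n) = i(o) + \sum_{k=1}^n \pi(b_k)
\]
is a genuine i.i.d.\ random walk in $\R^d$ with step distribution $\pi_*\mu$. Since $\pi$ is Lipschitz on $(\Gamma,d_\Gamma)$, finite first (resp.\ second) moments of $\mu$ in a word metric transfer to $\pi_*\mu$, and the classical $\R^d$-valued LLN and CLT apply to $(Z_n)$, with asymptotic mean $\E[\pi_*\mu]$ and covariance $\mathrm{Cov}(\pi_*\mu)$. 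These produce the candidates $e_\nu$ and $A_\nu$ after normalization by the drift $\lambda_\mu>0$ of the walk on $\Omega$.

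To transfer to continuous time, one aligns $n$ with $t$ via Kaimanovich's sublinear tracking: under finite first moment and non-elementarity, there exist $t_n\sim n\lambda_\mu$ with $d_\Omega(\gamma_n\cdot o,\, r_{\xi_b}(t_n))=o(n)$ almost surely. In the quasi-Lipschitz case of \Cref{laws-projected} this directly yields $\|Z_n - i\circ r_{\xi_b}(t_n)\| = o(n)$ and the LLN; sharper tracking under a second moment gives $o(\sqrt{n})$ and the CLT. The present \Cref{laws-projected-weak} only assumes local boundedness of $i$, which fails in cusp regions where $i$ winds rapidly, so sublinear tracking in $\Omega$ says nothing about $\|Z_n - i\circ r_{\xi_b}(t_n)\|$.

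To overcome this, one exploits that only convergence in probability and in distribution is claimed. Fix a ball $B\subset\Omega$ around $o$ and consider the times $n_k$ at which $\gamma_{n_k}^{-1} r_{\xi_b}(t_{n_k})\in B$, i.e.\ where the ray, translated back to the basepoint frame, lies in a compact zone on which $i$ is uniformly bounded. At these ``return times'' $\|Z_{n_k} - i\circ r_{\xi_b}(t_{n_k})\|$ is uniformly bounded thanks to $\Gamma$-equivariance. Between consecutive returns, the $i$-oscillation of $r_{\xi_b}$ is the $\pi$-image of the ambient group displacement plus a bounded residual, which can be controlled by moments of $\mu$. Ergodicity of the shift on $(\Gamma^{\N^*},\mu^{\otimes \N^*})$ then yields the WLLN by an averaging argument, and a martingale CLT applied to $(Z_n)$ at return times, combined with a random time change, gives the CLT. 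The main obstacle is establishing positive density of return times $\nu$-almost surely and verifying that the in-between residuals vanish in $L^1$ at scale $t$ for the WLLN and in $L^2$ at scale $\sqrt{t}$ for the CLT without geometric hypotheses on $\Omega$; the author's strategy of direct comparison with an abelian walk suggests this reduces entirely to moment bounds on $\mu$ in a word metric, which is the technical heart of the argument.
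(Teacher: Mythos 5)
Your reduction to an i.i.d.\ walk $Z_n=i(o)+\pi(b_1\cdots b_n)$ is the right starting point, and you correctly diagnose that the obstruction in the locally-bounded case is that sublinear tracking in $\Omega$ does not control $\|Z_n-i\circ r_{\xi_b}(t_n)\|$. But your proposed fix — pick random ``return times'' $n_k$ at which $d(\gamma_{n_k}\cdot o,\,r_{\xi_b}(t_{n_k}))\le R$, then bridge between them by an averaging/time-change argument — does not close the gap, and your chosen covariance matrix is incorrect.

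The first problem is the bridging. The theorem is about the deterministic time parameter $t$, and you need to evaluate $i\circ r_{\xi_b}$ at $t$, not at the random $t_{n_k}$. Between consecutive return times the ray can penetrate deeply into a region where $i$ is unbounded (this is precisely the cusp phenomenon that motivates the locally-bounded hypothesis), so there is no way to bound the in-between $i$-oscillation from a moment condition on $\mu$. That this cannot be repaired is also signalled by the statement itself: the locally-bounded case only claims a \emph{weak} LLN, not an a.s.\ one. The paper's approach sidesteps bridging altogether by using, for each fixed $n$, the stopping time $\tau_n(b)=\inf\{k : t_{b,k}\ge n\lambda_\mu\}$, so that the walk is sampled exactly where it crosses the sphere of radius $n\lambda_\mu$. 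Two ingredients make this work, neither of which appears in your argument: a deviation inequality for $d(\gamma_{\tau}\cdot o,\,r_{\xi_b}(t_{b,\tau}))$ that is \emph{uniform over all stopping times $\tau$} (\Cref{deviation}, a genuinely new statement proved via the Gromov product and the non-atomicity of $\nu$), and an overshoot bound $t_{b,\tau_n}-n\lambda_\mu=O_\Pbb(1)$ (\Cref{control-tau}, proved via the Busemann cocycle replacement $\sigma$ from \Cref{cocycle-approx}). Together they pin $\gamma_{\tau_n}\cdot o$ within bounded distance of $r_{\xi_b}(n\lambda_\mu)$ with uniformly high probability, and then local boundedness plus $\Gamma$-equivariance gives $\|\pi(\gamma_{\tau_n})-i\circ r_{\xi_b}(n\lambda_\mu)\|=O_\Pbb(1)$. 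After this reduction the WLLN is immediate from $\tau_n\sim n$.

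The second problem is your candidate covariance. You take $A_\nu$ to be (a normalization of) $\mathrm{Cov}(\pi_*\mu)$, but the paper explicitly warns that $A_\nu$ is \emph{not} proportional to $\mathrm{Cov}(\mu_{ab})$ when $\mu_{ab}$ is not centered, and exhibits a counterexample (\Cref{exampleAnu}) where $\mathrm{Cov}(\mu_{ab})$ is degenerate but $A_\nu$ is not. The reason is exactly the random sampling time $\tau_n$: it fluctuates at scale $\sqrt n$ and is correlated with the walk, so evaluating an additive functional at $\tau_n$ shifts the limiting covariance by the drift direction $e_\nu$. The paper handles this by passing to the zero-drift cocycle $\tsigma(g,x)=\pi(g)+\sigma_0(g,x)e_\nu$ and proving a martingale CLT with stopping times (\Cref{general-CLT+}), arriving at the corrected formula $\lambda_\mu A_\nu = \lim \frac1n\int (\pi(\gamma)+\sigma(\gamma,x)e_\nu)\,{^t(\pi(\gamma)+\sigma(\gamma,x)e_\nu)}\,d\widecheck\mu^{*n}$. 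A naive ``random time change'' on $Z_n$ alone would give the wrong Gaussian.
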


\bigskip

 \bigskip

\noindent{\bf  Mean and  covariance matrix}. The mean $e_{\nu}$ and the covariance matrix $A_{\nu}$ appearing in Theorems \ref{laws-projected},  \ref{laws-projected-weak} have  explicit formulas in terms of the measure $\mu$. As $e_{\nu}, A_{\nu}$ only depend on $\nu$, \emph{those formulas give numerical invariants on all possible $\mu$ such that $\mu*\nu=\nu$}. 

To express $e_{\nu}, A_{\nu}$ we need  introduce the \emph{rate of escape} $\lambda_{\mu}$ of the  $\mu$-sample paths on $\Omega$:
$$\lambda_{\mu}=\lim_{n\to +\infty} \frac{d_{\Omega}(o, b_{1}\dots b_{n}o)}{n} $$
 where $d_{\Omega}$ is the distance on $\Omega$, and $(b_{i})_{n\geq 1}\in \Gamma^{\N^*}$ is $\mu^{\otimes \N^*}$-typical. 
 \details{An application of Kingman's subbaditive ergodic theorem to the probability space $(\Gamma^{\N^*}, \mu^{\otimes\N^*})$ endowed with the shift shows that if $\mu$ has finite first moment, then $\lambda_{\mu}$ is well defined and the convergence also holds in $L^1(\Gamma^{\N^*}, \mu^{\otimes\N^*})$. If $\mu$ is also non-elementary, then $\lambda_{\mu}$ is non-zero, see for instance \cite[Proposition 3.3]{BQ16-CLThyp}.}

\bigskip
Letting $\mu_{ab}=\pi_{\star}\mu$, we  prove  
$$ e_{\nu}  =\lambda_{\mu}^{-1}\E(\mu_{ab}) $$

\bigskip

To describe  $A_{\nu}$, we introduce the Busemann boundary   $X$ of $(\Omega,d_{\Omega})$, the Busemann cocycle $\sigma : \Gamma\times X \rightarrow \R$. We prove in \details{\Cref{Sec-proof-CLT}} that uniformly in $x\in X$, 
\begin{equation} \label{formule-covariance}
\lambda_{\mu}A_{\nu}= \lim_{n\to+\infty}\frac{1}{n}\int_{\Gamma} (   \pi(\gamma) -\sigma(\gamma^{-1},x) e_{\nu}   )\, {^t(\pi(\gamma)-\sigma(\gamma^{-1},x) e_{\nu} ) }\,d\mu^{*n}(\gamma)
\end{equation}
 where $v\mapsto {^tv}$ refers to the transposition map.  
 In particular, we get the complement information that 
 
 \begin{itemize} 
 \item \emph{$A_{\nu}$ coincides with the covariance matrix $\lambda_{\mu}^{-1}Cov(\mu_{ab})$   if $\mu_{ab}$ is centered (but not in general).}
 \item \emph{$A_{\nu}$ is non-degenerate if  $\langle \supp \mu_{ab} \rangle_{\text{Vect}}= \R^d$ and the restriction  of the stable length function $l:\Gamma\rightarrow \R^+, \gamma\mapsto \lim_{n\to+\infty} \frac{1}{n}d_{\Omega}(o,\gamma^n.o)$ to the semigroup generated by the support of $\mu$  does not factor via $\pi$ (\Cref{ND-criterion})
 }
 \end{itemize}

\bigskip

The connection between the statistics of   the winding  $i\circ r_{\xi}$ and that of the $\mu_{ab}$-random walk on $\R^d$ arises from the strategy of the proof, which is to approximate $\nu$-typical geodesic rays on $\Omega$ by $\mu$-sample paths then to project them to $\R^d$. 
It came thus as a surprise, while working on this paper, to find that $A_{\nu}$ may not be proportional to $Cov(\mu_{ab})$. Even worse: \emph{it is possible that $Cov(\mu_{ab})$ has rank $d-1$ while $A_{\nu}$ has rank $d$}. This is the case if $\Gamma$ is the free group $F_{2}=\langle u,v\rangle$,   $\pi : F_{2} \rightarrow \Z^2, u, v \mapsto (1,0), (0,1)$, and $\mu=\frac{1}{3}(\delta_{u}+\delta_{uv}+\delta_{uv^{-1}})$,  (see \Cref{exampleAnu}). More explanations on this peculiarity are given in the strategy of proof below.

\subsection{Winding of Patterson-Sullivan measures}
Applying Theorems \ref{laws-projected}, \ref{laws-projected-weak} to the context of Example 2 above, we obtain \emph{winding statistics for the geodesic flow directed by a Furstenberg measure on a  general  manifold $M$ with Gromov hyperbolic universal cover}. The particular case of Patterson-Sullivan measures for  convex cocompact discrete subgroups yields the following. Recall that $\Hn$  is the $N$-dimensional real hyperbolic space.

\begin{cor}[Homological winding for PS-measures] \label{laws-hyperbolic} 
Let $\Gamma \leq Isom^+(\Hn)$  be a convex cocompact torsion-free non-elementary discrete subgroup, and $m$ be any probability measure on $\partial \Hn$ that is absolutely continuous with respect to the class of Patterson-Sullivan of $\Gamma$. 

Then the homological projection of $m$-typical geodesic rays to $H_{1}(\Gamma\backslash \Hn, \R)$ has sublinear escape, satisfies  a central limit theorem with non-degenerate  covariance matrix, the law of the iterated logarithm, and the gambler's ruin estimate (for any $1$-dimensional projection). It also satisfies the  principle of large deviations if $m$ has $L^{1+\eps}$-Radon Nikodym derivatives with respect to Patterson-Sullivan measures.
\end{cor}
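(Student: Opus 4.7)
My plan is to apply Theorem \ref{laws-projected} to a carefully chosen Furstenberg measure $\nu$ in the Patterson--Sullivan class of $\Gamma$, and then to transfer each conclusion to the given initial measure $m$.

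\textbf{Setup.} By Connell--Muchnik, combined with the moment results of Lalley and Li cited above, pick a non-elementary probability measure $\mu$ on $\Gamma$ with finite exponential moment, generating $\Gamma$ as a semigroup, and whose harmonic measure $\nu$ lies in the Patterson--Sullivan class; in particular $m \ll \nu$. The homological projection $i:\Hn\to H_1(\Gamma\backslash\Hn,\R)$ is quasi-Lipschitz: since $\Gamma$ is convex cocompact and $M=\Gamma\backslash\Hn$ has no cusps, a basis of $H^1(M,\R)$ is represented by $\Gamma$-invariant closed $1$-forms $\omega_1,\dots,\omega_d$ on $\Hn$ with $\sup_x\|\omega_i\|_x<\infty$ (the obstruction of Example~2 is the cuspidal homology, which is absent here).

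\textbf{Consequences of Theorem \ref{laws-projected} for $\nu$.} Theorem \ref{laws-projected} yields the LLN, CLT, LIL, PLD and GR for $\nu$-typical rays, with drift $e_\nu=\lambda_\mu^{-1}\E(\mu_{ab})$ and covariance $A_\nu$. Since $\nu$ is in the PS class, \Cref{non-centered-sing} forces $\mu_{ab}$ to be centered, whence $e_\nu=0$ and the LLN becomes sublinear escape. In this centered case the explicit formula for $A_\nu$ collapses to $A_\nu=\lambda_\mu^{-1}\mathrm{Cov}(\mu_{ab})$, which is non-degenerate because $\supp\mu_{ab}$ spans $H_1(M,\R)$ (as $\mu$ generates $\Gamma$ as a semigroup, so $\pi(\supp\mu)$ generates the torsion-free abelianization, which is a lattice in $\R^d$).

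\textbf{Transfer from $\nu$ to $m$.} Sublinear escape and the LIL are $\nu$-almost sure, hence hold $m$-almost surely since $m\ll\nu$. For the PLD, if $f=dm/d\nu\in L^{1+\eps}(\nu)$, H\"older's inequality gives $m(A_t)\leq\|f\|_{L^{1+\eps}}\nu(A_t)^{\eps/(1+\eps)}$, so exponential decay persists. For the CLT and the gambler's ruin estimate, observe that the proof of Theorem \ref{laws-projected} couples $\nu$-typical rays with $\mu$-sample paths, reducing the CLT/GR for the winding $i\circ r_\xi$ to the corresponding statements for the abelianized random walk $S_n=\pi(b_1\dots b_n)$. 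The latter are i.i.d.\ sums, so the convergence $S_n/\sqrt{n}\to\mathscr{N}(0,\mathrm{Cov}(\mu_{ab}))$ is stable with respect to any $L^1$ random variable on the path space (approximation by cylinder functions plus independence of the increments); consequently, writing $X_t:=i\circ r_\xi(t)/\sqrt{t}$, the $\nu$-CLT is stable: for every $f\in L^1(\nu)$,
$$\E_\nu\bigl[f(\xi)\,\mathbf{1}_{X_t\in A}\bigr]\underset{t\to+\infty}{\longrightarrow}\E_\nu[f]\cdot\gamma(A),\qquad \gamma=\mathscr{N}(0,A_\nu).$$
Taking $f=dm/d\nu$ yields the CLT under $m$, and the same scheme handles GR.

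\textbf{Main obstacle.} The delicate point is the CLT/GR transfer from $\nu$ to $m$: one must verify that the proof of Theorem \ref{laws-projected} actually produces stable convergence with respect to $\nu$, which rests on the automatic stability of the i.i.d.\ CLT and on the fact that the tracking of $\nu$-typical rays by $\mu$-sample paths is precise enough (error $o(\sqrt{t})$) for the stability to survive in the limit. The remaining steps---non-degeneracy of $A_\nu$, the H\"older estimate for the PLD, and the transfer of almost-sure statements---are routine.
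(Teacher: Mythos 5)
Your proposal has two gaps, one of which is essentially circular, and the route you take from $\nu$ to $m$ differs genuinely from the paper's.

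\textbf{Circularity in the zero-drift step.} You deduce $e_\nu=0$ by invoking \Cref{non-centered-sing}. But as the introduction states, \Cref{non-centered-sing} is proven as a consequence of \Cref{laws-projected} together with \Cref{zero-drift}, and \Cref{zero-drift} is the lemma $e_{\nu_o}=0$, established \emph{inside} the proof of \Cref{laws-hyperbolic} in Section 5.1. So the corollary you cite already presupposes the very statement you need. The actual content of that lemma --- that $\nu_o$-typical rays have zero homological drift --- requires a genuine argument: one introduces the winding cocycle $\sigma(v,t)$ on $T^1M$, applies Birkhoff's ergodic theorem to the Bowen--Margulis--Sullivan measure, and uses the $\phi_1$- and flip-invariance of $m_{BMS}$ to show the integral $\int \sigma(\cdot,1)\,dm_{BMS}$ vanishes. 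This computation is the main nontrivial step of the LLN part of \Cref{laws-hyperbolic} and cannot be bypassed by citing \Cref{non-centered-sing}.

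\textbf{Different route for the $\nu\to m$ transfer of CLT and GR.} You propose stable convergence: the abelianized random walk CLT is stable (classical), the coupling error is $o(\sqrt t)$, hence the CLT for $i\circ r_\xi(t)$ is stable, hence integrating against $f=dm/d\nu\in L^1(\nu)$ yields the CLT under $m$. The paper proceeds differently: it uses the decomposition lemma (\Cref{reconstruction}), showing that any smooth positive density on $\Lambda_\Gamma$ can be written $\sum_\gamma c_\gamma e^{-\delta B_\xi(\gamma o,o)}$ with $c_\gamma\geq 0$, so that $\varphi\,\nu_o$ is a convex combination of the $\nu_{\gamma.o}$. Each $\nu_{\gamma.o}$ is Furstenberg for $\mu_{\gamma.o}$ (Fact \ref{fact-FL}) with the \emph{same} drift and covariance by $\Gamma$-equivariance, so the CLT and GR pass to the convex combination, then to general $m$ by $L^1$ approximation. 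Your stability route is plausible for the one-time CLT, since the harmonic density $f(\xi_b)$ is $\mathcal B_\infty$-measurable and the martingale CLT with stopping time converges stably. But GR is a statement about the \emph{entire trajectory} $(i\circ r_\xi(t)-te_\nu)_{t>0}$, not a marginal at a single time: transferring it under $m\ll\nu$ by stability requires a \emph{functional} stable invariance principle, together with control of the stopping-time coupling at the process level. You gesture at this with ``the same scheme handles GR'' but give no argument; this is a real gap. The paper's convex-decomposition argument handles CLT and GR uniformly because it never changes the driving walk.

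The remaining parts of your proposal --- quasi-Lipschitz regularity of $i$ in the convex cocompact case, non-degeneracy of $A_\nu$ from centeredness plus the fact that $\pi(\supp\mu)$ spans $H_1(M,\R)$, transfer of the almost-sure LLN/LIL under $m\ll\nu$, and the H\"older transfer of the PLD --- match the paper and are correct.
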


The proof is given in \Cref{Sec-laws-hyperbolic}. It combines  \Cref{laws-projected}, the fact that  PS-measures arise as Furstenberg measures for walks with finite exponential moment \cite{LiNaudPan21}, and -to establish sublinear escape- the fact that PS-measures represent the geodesic projection of the Bowen-Margulis-Sullivan measure,  which is  symmetric and flow-invariant on $\Gamma\backslash \Hn$.

 The CLT (for $m$ general), the (multi-dimensional) LIL, the GR, appear to be new, even in the most simple case of real hyperbolic surfaces, i.e. when $N=2$.

The  LLN,  the PLD, and the CLT when $m$ is a Patterson-Sullivan measure, can be obtained by combining results already present in the literature. The seminal case of the CLT on a compact hyperbolic manifold follows from  Sinai's paper \cite{Sinai60}. For the general case, the argument consists in coding the geodesic flow by a suspension flow above a  shift of finite type \cite{Bowen73, ConstLafThom20}, then  apply techniques of symbolic dynamics \cite{Ratner73CLT, DenkerPhilipp84, Waddington96}. Our approach is different, we are directly able to simulate random geodesic rays projected in homology by a random walk on $H_{1}(\Gamma\backslash \Hn, \R)$ with i.i.d. increments, and apply classical results for additive Markov chains.  

We also signal to the reader only interested in  \Cref{laws-hyperbolic} that the proof of this  result, once isolated, is much shorter than the present paper. Indeed, it does not use the appendix whose purpose is to deal  with the non-centered case in Theorems \ref{laws-projected}, \ref{laws-projected-weak}.

\subsection{Strategy and organization of the paper}

The guiding principle to prove Theorems \ref{laws-projected},  \ref{laws-projected-weak} is simple. Let $(B,\beta)=(\Gamma^{\N^*},\mu^{\otimes \N^*})$ denote the space of instructions. Typical  $b\in B$ give a sample path $b_{1}\dots b_{n}.o$ on $\Omega$ that stays close to  the geodesic ray $r_{\xi_{b}}$, whose distribution coincides with $\nu$ (for $b$ varying with law $\beta$).  Hence, we aim to infer the behavior of $i\circ r_{\xi_{b}}(n \lambda_{\mu})$ by comparison with that of $\pi(b_{1}\dots b_{n})$. Unfortunately, $\pi(b_{1}\dots b_{n})$ only approximates $i\circ r_{\xi_{b}}(t_{b,n})$ where $t_{b,n}=d_{\Omega}(o,b_{1}\dots b_{n}.o)$  varies at scale $\sqrt{n}$ around $n\lambda_{\mu}$. This approximation is not precise enough to prove second-order limit theorems. For this reason, we introduce the stopping times $\tau_{n}(b)$ at which $(b_{1}\dots b_{i}.o)_{i\geq 1}$ leaves the ball of radius $n\lambda_{\mu}$, and we study how $\pi(b_{1}\dots b_{\tau_{n}(b)})$ approximates $i\circ r_{\xi_{b}}(n\lambda_{\mu})$. Then we reduce the limit theorems for  $i\circ r_{\xi_{b}}(n\lambda_{\mu})$ to  limit theorems for the  random variables $\pi(b_{1}\dots b_{\tau_{n}(b)})$, and we  obtain the latter  through a careful analysis of the stopping times $\tau_{n}$. The  aforementioned reduction  also requires some work, given that both sequences of variables ultimately do not yield the same covariance matrix. This discrepancy occurs in the non-centered case $\E(\pi_{\star}\mu)\neq 0$, where we also need take into acount the recentering parameter. This forces to deal with martingales instead of an i.i.d. additive Markov chain on $\R^d$.  The complications brought up by this recentering can be observed in the formula for $A_{\nu}$. 

\bigskip

\noindent The paper is organized as follows. 

\smallskip

\Cref{Sec-deviation} explains how $\nu$-typical geodesic rays on $\Omega$ may be simulated with a small error by the $\mu$-sample paths on $\Omega$. More precisely, we recall various results of geodesic tracking from \cite{Tiozzo15, Benard21-asympt, Choi21-CLT}, and establish a uniform deviation inequality conditionally to a stopping time. 

\Cref{Sec-Norm&Busemann} recalls from \cite{BQ16-CLThyp} how  the distance function can be replaced by a cocycle in order to study  $\mu$-sample paths on $\Omega$. As a first application, we  estimate the time of exit of sample paths from a ball,  as well as the overshoot. 
 
\Cref{Sec-laws-projected} is dedicated to the proof of Theorems \ref{laws-projected}, \ref{laws-projected-weak}. It  relies on the two previous sections and on \Cref{Sec-general-laws}. We also comment on the limit covariance matrix $A_{\nu}$. 

\Cref{Sec-winding-hyperbolic} starts by recalling the notion of Patterson-Sullivan measures associated to a geometrically finite discrete subgroup of isometries of $\Hn$. We then prove winding statistics for PS-measures in the convex cocompact case (\Cref{laws-hyperbolic}), and establish singularity with harmonic measures in the presence of cusps (\Cref{singularity}). 
 
 \Cref{Sec-questions} records a few open questions leading to natural continuations of our study. 
 
\Cref{Sec-general-laws}  establishes limit laws for martingales arising from cocycles. The interest is that our setting is very general, encapsulating that of \cite{BQRW, BQ16-CLThyp, deAcosta83}, and deals with optimal\footnote{at least for the CLT, LIL, PLD.} moment assumptions. We also obtain a stronger version of the CLT, in which the time variable $n$ is replaced by a family of stopping times $(\tau_{n})$ approximating $n$ at scale $\sqrt{n}$.

\bigskip
\noindent{\bf Remark}. We believe the assumption of properness on $\Omega$ is not essential in the paper, and  could be  replaced by only assuming separability. Without properness, the Gromov boundary does not identify with  classes of geodesic rays, but with classes of $(1,Q)$-quasi geodesic rays, where  $Q>0$ is some fixed constant determined by the hyperbolicity constant of $\Omega$ \cite[Remark 2.16]{KapovichBenakli02}. Hence, we  rather need define $r_{\xi}$ as a $(1,Q)$-quasi geodesic ray from $o$ to $\xi$. To adapt the proof, we point out that we only use the properness assumption on $\Omega$ is  \Cref{Sec-Norm&Busemann}, while refering to Benoist-Quint's paper \cite{BQ16-CLThyp} to introduce the horofunction boundary, the Busemann cocycle, and its properties. We believe \cite{BQ16-CLThyp} could be adapted almost verbatim to remove the properness assumption, using the general theory of horofunction compactification developed by Maher and Tiozzo in \cite{MaherTiozzo18}. Asking that  $\Omega$ be separable would guarantee as in  \cite{MaherTiozzo18} that this compactification is indeed a metric space. Not to burden the article, we do not  pursue in this direction.

\bigskip

\noindent{\bf Acknowledgements}. I am thankful to P\'eter Varj\'u  for useful comments on a preliminary version of this text. I am also grateful to the anonymous referee for his meticulous reading and valuable remarks.

\section{Geodesic tracking of sample paths} \label{Sec-geod-app} \label{Sec-deviation}

We  explain how the distribution of $\nu$-typical geodesic rays on $\Omega$ can be approximated  by that of $\mu$-sample paths. 

\bigskip

We keep the notations ($\Omega$, $d_{\Omega}$, $o$, \details{$(r_{\xi})_{\xi\in \partial \Omega}$}, $\Gamma$) of \Cref{Sec-intro}, let $\mu$ be a non-elementary probability measure on $\Gamma$ and $\nu$ its Furstenberg measure on $\partial \Omega$.  We set $(B,\beta)=(\Gamma^{\N^*}, \mu^{\otimes \N^*})$ and recall that for $\beta$-typical $b\in B$, the sample path $(b_{1}\dots b_{n}.o)_{n\geq 0}$ on $\Omega$ converges to some point in the Gromov boundary $\xi_{b}\in \partial \Omega$, whose distribution under $\beta$ coincides with $\nu$. The corresponding geodesic ray $r_{\xi_{b}}$ is said to be \emph{asymptotic} to the sample path $(b_{1}\dots b_{n}.o)_{n\geq 0}$.   We also write $t_{b,n}=d_{\Omega}(o, b_{1}\dots b_{n}.o)$. 

\bigskip

We start with a very general result stating that $\mu$-sample paths on $\Omega$ have a small chance to deviate from their asymptotic geodesic ray, regardless of the strategy deciding the time of observation. A strategy is incarnated by a stopping time on $(B, \beta)$, implicitely refering  to   the filtration $(\mathcal{B}_{k})_{k\geq 1}$ where $\mathcal{B}_{k}$ is the sub $\sigma$-algebra of $B$ generated by $b_{1}, \dots, b_{k}$.  

\begin{thm}[Deviation inequality with a stopping time] \label{deviation} \details{Let $\mu$ be a non-elementary probability measure on $\Gamma$ and $\nu$ its Furstenberg measure on $\partial \Omega$. }
There exists a  family $(\eps_{R})_{R\geq0}\in (0, 1]^{\R^+}$ such that $\eps_{R}\to 0$ as $R\to+\infty$, and   
for every $\beta$-a.e. finite stopping time $\tau : B\rightarrow \N^*\cup\{\infty\}$, every  $R>0$, 
$$\beta\{ b: d_{\Omega}\big(b_{1}\dots b_{\tau(b)}.o , \,r_{\xi_{b}}(t_{b,\tau(b)})\big)>R\}\leq \eps_{R} $$
Moreover, if $\mu$ has finite exponential moment, then one can choose  $\eps_{R}=D e^{-\delta R} $ for some constants $D, \delta>0$.
 
\end{thm}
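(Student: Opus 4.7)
I would prove the theorem by combining a hyperbolic-geometric estimate with the strong Markov property of the i.i.d.\ sequence $b$, thereby reducing the uniform deviation bound to a regularity statement for the Furstenberg measure $\nu$ on $\partial\Omega$. This approach bypasses any need for almost-sure simultaneous control of the sample path: the Markov property at the stopping time $\tau$ is exactly what makes the final bound uniform in $\tau$.

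\emph{Hyperbolic reduction.} In a Gromov-hyperbolic space with hyperbolicity constant $C_\Omega$, the thin-triangle inequality applied to triangles with vertices $o$, $x$, $r_\xi(s)$ (and passage to the limit $s\to\infty$) yields the purely geometric bound
\[
d_{\Omega}\bigl(x,\,r_\xi(d_\Omega(o,x))\bigr)\ \leq\ 2\,(o\,|\,\xi)_x + C_1
\]
valid for all $x\in\Omega$ and $\xi\in\partial\Omega$, with a constant $C_1=C_1(C_\Omega)$ and $(o\,|\,\xi)_x$ the Gromov product at $x$. Writing $\gamma_\tau:=b_1\cdots b_\tau$ and using $\Gamma$-equivariance of Gromov products, this specializes to
\[
d_\Omega\bigl(b_1\cdots b_\tau.o,\, r_{\xi_b}(t_{b,\tau})\bigr)\ \leq\ 2\,(\gamma_\tau^{-1}o\,\big|\,\gamma_\tau^{-1}\xi_b)_o + C_1.
\]

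\emph{Markov property.} Since $\tau$ is $\beta$-a.s.\ finite, the strong Markov property for the i.i.d.\ sequence ensures that the shifted sequence $(b_{\tau+k})_{k\geq 1}$ is $\beta$-distributed and independent of $\mathcal{B}_\tau$. The limit $\eta_b:=\lim_{n\to\infty} b_{\tau+1}\cdots b_n.o$ is therefore $\beta$-a.s.\ well-defined; by continuity of $\gamma_\tau$ on $\partial\Omega$ it equals $\gamma_\tau^{-1}\xi_b$, and conditionally on $\mathcal{B}_\tau$ it has distribution $\nu$. Since $\gamma_\tau^{-1}o$ is $\mathcal{B}_\tau$-measurable, integrating the previous inequality against $\beta$ and applying Fubini yields
\[
\beta\bigl\{d_\Omega(b_1\cdots b_\tau.o,\, r_{\xi_b}(t_{b,\tau}))>R\bigr\}\ \leq\ \sup_{x\in\Omega}\nu\Bigl\{\eta\in\partial\Omega:\ (x\,|\,\eta)_o>\tfrac{R-C_1}{2}\Bigr\},
\]
a quantity independent of $\tau$.

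\emph{Regularity of $\nu$ and conclusion.} It remains to show that $\phi(R):=\sup_{x\in\Omega}\nu\{\eta:(x\,|\,\eta)_o>R\}$ tends to $0$ as $R\to\infty$, and that $\phi(R)\leq De^{-\delta R}$ for some $D,\delta>0$ under finite exponential moment. For non-elementary $\mu$ this is a uniform non-concentration (\emph{shadow-decay}) property for Furstenberg measures on Gromov-hyperbolic boundaries; the exponential form reduces to H\"{o}lder regularity of $\nu$ under a finite exponential moment, which is part of the geodesic-tracking results cited at the opening of \Cref{Sec-deviation}. Setting $\eps_R:=\phi((R-C_1)/2)$ then finishes the proof. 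The main obstacle is this regularity input on $\nu$---especially its exponential-decay form; everything else is a routine synthesis of hyperbolic thin-triangle geometry with the strong Markov property.
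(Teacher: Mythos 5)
Your argument is essentially the paper's: you reduce the deviation to a Gromov product $(\gamma_\tau^{-1}o\,|\,\gamma_\tau^{-1}\xi_b)_o$ via thin-triangle geometry (the paper phrases this through coarse equivariance of geodesic rays $F(x,\xi)$, which is the same estimate), then use the strong Markov property at $\tau$ to decouple $\gamma_\tau^{-1}o$ from the shifted boundary point $\xi_{T^\tau b}\sim\nu$, and finally invoke shadow decay from non-atomicity (resp.\ H\"{o}lder regularity) of $\nu$. The steps, including the citation for regularity, match the paper's proof.
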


\noindent{\bf Remarks.}
1) For $\tau=n$ constant, this result follows from  \cite[Proposition 2.11]{BoulMathSertSisto21} and the remark that $d(b_{1}\dots b_{n}, r_{\xi_{b}}(t_{b,n})) \leq  2d(b_{1}\dots b_{n}, r_{\xi_{b}}(\R^+))$. We explained earlier in the strategy of proof why we need the stronger version above.

2) If $\mu$ has finite $p$-th moment where $p>1$, the proof of \Cref{deviation} combined with the $\log^{p-1}$-regularity of $\nu$ \cite[Proposition 4.2]{BQ16-CLThyp} allows to choose $\eps_{R}$ so that $\int_{R>0}\eps_{R}R^{p-2} dR<\infty$.

\bigskip

\begin{proof}[Proof of \Cref{deviation}]
For $x\in \Omega$, $\xi \in \partial \Omega$, we let $F(x, \xi)$ be the image of any geodesic ray from $x$ to $\xi$. By hyperbolicity, the family $(F(x,\xi))_{x\in \Omega, \xi \in \partial \Omega}$ is coarsely $\Gamma$-equivariant, meaning that for  $ g\in \Gamma$, the distance between 
$g.F(x,\xi)$ and   $F(gx, g\xi)$ is bounded by $O(\delta_{\Omega})$ where $\delta_{\Omega}$ is a hyperbolicity constant of $\Omega$. 

\details{
Observe that for any $s\in \R^+$,
\begin{align*}
d_{\Omega}(b_{1}\dots b_{\tau(b)}.o, \, r_{\xi_{b}}(t_{b,\tau(b)})) & \leq d_{\Omega}(b_{1}\dots b_{\tau(b)}.o, \, r_{\xi_{b}}(s)) +d_{\Omega}(r_{\xi_{b}}(s), \, r_{\xi_{b}}(t_{b, \tau(b)})) \\
& = d_{\Omega}(b_{1}\dots b_{\tau(b)}.o, \, r_{\xi_{b}}(s)) +|s-t_{b, \tau(b)}|\\
 &\leq 2d_{\Omega}(b_{1}\dots b_{\tau(b)}.o, \, r_{\xi_{b}}(s))
\end{align*}
where the last line uses that  $| d_{\Omega}(x,z)-d_{\Omega}(y,z)|\leq d_{\Omega}(x,y)$ for all $x,y,z\in \Omega$.
}

 Taking the infimum over $s\in \R^+$, we deduce
\begin{align*}
\frac{1}{2}d_{\Omega}(b_{1}\dots b_{\tau(b)}.o, \, r_{\xi_{b}}(t_{b,\tau(b)})) 
&\leq  d_{\Omega}(b_{1}\dots b_{\tau(b)}.o, F(o, \xi_{b})) + O(\delta_{\Omega})\\
&=d_{\Omega}(o, F(b^{-1}_{\tau(b)}\dots b^{-1}_{1}.o, \xi_{T^{\tau(b)}b})) + O(\delta_{\Omega})\\ 
&\leq (b_{\tau(b)}^{-1}\dots b_{1}^{-1} .o\,|\, \xi_{T^{\tau(b)}b})_{o} + O(\delta_{\Omega})
\end{align*}
where $T: B\rightarrow B, (b_{i})_{i\geq1}\mapsto (b_{i+1})_{i\geq1}$ is the one-sided shift, $(.\,|\,.)_{o}$ denotes the Gromov product based at the origin $o$. \details{For the last inequality we use the fact that for $x,y\in \Omega$, $(x|y)_{o}\geq d_{\Omega}(o, [x,y]) - O(\delta_{\Omega})$ where $[x,y]$ is the geodesic segment from $x$ to $y$ (see for instance \cite[Chapter 2 \S 1.4]{Ghys83}) and pass to the limit to allow $y\in \Omega \cup \partial \Omega$.} 

In particular, using that $T^{\tau(b)}b$ varies with law $\beta$ when $b$ varies with law $\beta$, and does so independently from $b_{1},\dots b_{\tau(b)}$, we get

\begin{align*}
\beta\{ b : d_{\Omega}(b_{1}\dots b_{\tau(b)}.o ,\, r_{\xi_{b}}(t_{b,\tau(b)}))>2R\}& \leq \beta^{\otimes 2}\{ (b,a): (b_{\tau(b)}^{-1}\dots b_{1}^{-1}.o \,|\, \xi_{a})_{o}>R-O(\delta_{\Omega})\}\\
&\leq \sup_{x\in \Omega} \beta \{b : (x \,|\, \xi_{b})_{o}>R-O(\delta_{\Omega})\}
\end{align*}

Given $x\in \Omega$, $\xi, \eta\in \partial \Omega$, the definition of the hyperbolicity of $\Omega$ in terms of Gromov product \details{\cite[Chapter 2 \S 1.3]{Ghys83}} gives 
$$
 \left.
    \begin{array}{ll}
        (x| \xi)_{o}\geq R & \\
        (x|\eta)_{o}\geq R &
    \end{array}
\right \} \implies (\xi|\eta)_{o}\geq R -O(\delta_{\Omega})
$$ 
\details{Endow $\partial \Omega$ with a Gromov metric $d_{\partial_{\Omega}}$ \cite[Chapter 7 \S 3]{Ghys83}. It satisfies that for some small enough $\eps>0$, one has $d_{\partial_{\Omega}}(\xi, \eta)\leq \eps^{-1}e^{-\eps (\xi|\eta)_{o}}$.  In particular, the set $\{\xi \in \partial \Omega:  (x \,|\, \xi)_{o}>R-O(\delta_{\Omega})\}$ is included in a ball of $\partial \Omega$ whose radius is exponentially small in $R$.}  The fact that $\nu$ has no atom, and is even H\"older regular when $\mu$ has finite exponential moment \cite[Corollary 2.13]{BoulMathSertSisto21}, finishes the proof. 

\end{proof}

\bigskip

We now recall various bounds for the distance separating a fixed typical sample path on $\Omega$ and its asymptotic geodesic ray. 
\begin{thm}[Geodesic tracking] \label{geod-track} 
 \details{Let $\mu$ be a non-elementary probability measure on $\Gamma$ and $\nu$ its Furstenberg measure on $\partial \Omega$. }
\begin{itemize}

\item For every $\eps>0$, there exists  $R>0$ such that for $\beta$-almost every $b\in B$, 
$$\liminf_{n\to+\infty} \, \frac{1}{n}\, \sharp\{i\leq n \,:\,d_{\Omega}(b_{1}\dots b_{i}.o, r_{\xi_{b}}(t_{b,i}))\leq R\} \geq 1- \eps$$

\item Assume $\mu$ has finite $p$-th moment where $p>0$. Then for $\beta$-almost every $b\in B$, 
$$d_{\Omega}(b_{1}\dots b_{n}.o, r_{\xi_{b}}(t_{b,n}))=o(n^{\frac{1}{2p}})$$

\item
Assume $\mu$ has finite exponential moment. There exists a constant $D>0$ such that for $\beta$-almost every $b\in B$, for large $n\geq0$, 
$$d_{\Omega}(b_{1}\dots b_{n}.o , r_{\xi_{b}}(t_{b,n}))\leq D\log n$$
\end{itemize}

\end{thm}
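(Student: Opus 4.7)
All three estimates are deduced from Theorem~\ref{deviation} applied with the constant stopping time $\tau \equiv n$ (or $\tau \equiv i$), combined with standard conversions from single-time tail bounds to almost-sure asymptotic statements.

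For the third item (exponential moment), Theorem~\ref{deviation} yields $\beta(d_\Omega(b_1\cdots b_n.o, r_{\xi_b}(t_{b,n})) > R) \leq D e^{-\delta R}$. Evaluated at $R_n := C\log n$ with $C\delta > 1$ this is summable in~$n$, so the Borel--Cantelli lemma gives the $D \log n$ bound almost surely for all large~$n$.

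For the second item ($p$-th moment), I would invoke the refinement from Remark~2 after Theorem~\ref{deviation}: under this hypothesis $\eps_R$ may be chosen so that $\int_0^\infty \eps_R R^{p-2}\,dR < \infty$. Applying Borel--Cantelli along the dyadic times $n_k = 2^k$ at scale $R = \eps\, n_k^{1/(2p)}$ controls the deviation at the $n_k$ (the integrability condition implies $\sum_k \eps_{\eps\, n_k^{1/(2p)}} < \infty$), and a bridging argument based on the moment control of the oscillation $d_\Omega(g_{n_k}.o, g_n.o)$ for $n \in [n_k, n_{k+1}]$, together with the corresponding control on the ray parameter variation $|t_{b,n}-t_{b,n_k}|$, extends the bound to all $n$ and yields $o(n^{1/(2p)})$.

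For the first item, Theorem~\ref{deviation} with $\tau \equiv i$ yields $\beta(E_i^R) \leq \eps_R$ uniformly in $i$, where $E_i^R := \{d_\Omega(b_1\cdots b_i.o, r_{\xi_b}(t_{b,i})) > R\}$. Fubini then gives $\mathbb{E}_\beta[n^{-1}\sum_{i \leq n} \mathbf{1}_{E_i^R}] \leq \eps_R$. To pass to almost-sure control of the $\limsup$, I would translate the event $E_i^R$ by $(b_1\cdots b_{i-1})^{-1}$ and use hyperbolic fellow-travelling between rays with common endpoint $\xi_{T^{i-1}b}$ but different nearby basepoints: up to an additive $O(\delta_\Omega)$ loss in $R$ and an exceptional set controlled by the Gromov-product estimate from the proof of Theorem~\ref{deviation}, the indicator $\mathbf{1}_{E_i^R}$ becomes the $i$-th shift of a single measurable function $\phi_R$ on $(B,\beta)$ with $\int \phi_R\,d\beta \leq \eps_{R - O(\delta_\Omega)}$. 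Birkhoff's ergodic theorem for the Bernoulli shift on $(B,\beta)$ then gives $\limsup_n n^{-1}\sum_{i\leq n} \mathbf{1}_{E_i^R} \leq \eps_{R - O(\delta_\Omega)}$ almost surely, and choosing $R$ so that $\eps_{R - O(\delta_\Omega)} \leq \eps$ concludes. The main obstacle is precisely this stationarization step: the rays $r_{\xi_b}$ are anchored at the fixed basepoint $o$ while the Bernoulli shift displaces the basepoint of the walk, so reducing the comparison to a Birkhoff sum requires the hyperbolic fellow-travelling of rays with common endpoint but different starting points, at the cost of an additive slack on the deviation radius~$R$.
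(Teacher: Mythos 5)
Your treatment of the third item matches the paper exactly: \Cref{deviation} with the constant stopping time $\tau\equiv n$ followed by Borel--Cantelli at scale $R_n=C\log n$. For the first two items, however, the paper gives no self-contained argument at all --- it cites \cite{Benard21-asympt} (Theorem~A~bis) for the first and \cite{Choi21-CLT} (Theorem~D) for the second --- and your attempts to supply proofs both contain genuine gaps.

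For the second item, the remark you invoke after \Cref{deviation} is stated under the hypothesis $p>1$ (indeed the integral $\int_{R>0}\eps_R R^{p-2}\,dR$ has no content for $p\leq 1$), whereas the theorem asserts the estimate for every $p>0$; the range $0<p\leq 1$ is therefore not covered by your scheme. More seriously, the bridging step over dyadic blocks does not close. For $n\in(n_k,2n_k]$ the walk $b_1\cdots b_n.o$ typically moves away from $b_1\cdots b_{n_k}.o$ by a distance of order $n_k\,\mathbb{E}_\mu[d_\Omega(o,g.o)]\asymp n_k$, which swamps the target scale $o(n_k^{1/(2p)})$. A pointwise bound on $d_\Omega(b_1\cdots b_{n_k}.o,r_{\xi_b}(t_{b,n_k}))$ gives no control on $\sup_{n\in(n_k,2n_k]}d_\Omega(b_1\cdots b_n.o,r_{\xi_b}(t_{b,n}))$; one needs a maximal inequality (or a subdivision so fine that the Borel--Cantelli sum no longer converges), not interpolation from a single time.

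For the first item, the stationarization step is where the argument breaks. After conjugating by $(b_1\cdots b_{i-1})^{-1}$ the two basepoints $o$ and $(b_1\cdots b_{i-1})^{-1}.o$ are \emph{not} nearby --- their distance is $t_{b,i-1}\sim i\lambda_\mu\to\infty$. Two asymptotic rays to $\xi_{T^{i-1}b}$ from these two points fellow-travel only past the parameter $(o\,|\,\xi_{T^{i-1}b})_{(b_1\cdots b_{i-1})^{-1}.o}=(b_1\cdots b_{i-1}.o\,|\,\xi_b)_o$, which up to $O(\delta_\Omega)$ equals $t_{b,i-1}$ minus the deviation $d_\Omega(b_1\cdots b_{i-1}.o,r_{\xi_b}(t_{b,i-1}))$ --- the very quantity you are trying to control. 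Thus the discrepancy between ${\bf 1}_{E_i^R}(b)$ and a candidate $\phi_R(T^{i-1}b)$ is governed not by an additive $O(\delta_\Omega)$ loss but by the past-dependent (and unbounded) deviation at time $i-1$, so the indicator does not reduce to a Birkhoff sum of a fixed function. This circularity is precisely why the cited reference handles the first item by a subadditive ergodic argument rather than a direct Birkhoff stationarization.
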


\bigskip

By way of proof, we make a few comments. 

\bigskip
\noindent{\bf Remarks}.
1) Notice that the first bound does not require any moment condition on $\mu$. A similar result in the context of random walks on homogeneous spaces of arbitrary rank appears in \cite[Theorem A bis]{Benard21-asympt} and the proof easily extends to our setting.  We will not use this estimate, but we recall it for completeness.

2) For $\mu$ with finite first moment,  Tiozzo establishes sublinear escape \cite{Tiozzo15}.  For any $p>0$, the stronger estimates above is proven by Choi in \cite[Theorem D]{Choi21-CLT}. 

3) The  third bound assuming finite exponential moment  follows directly from  \Cref{deviation} and the Borel-Cantelli lemma.

\section{The norm and the Busemann cocycle} 

\label{Sec-Norm&Busemann}

The proof of Theorems \ref{laws-projected-weak},  \ref{laws-projected} requires a fine understanding of the distance $t_{b,n}$. However, it is not so handy to manipulate. We show here that $t_{b,n}$ can be  replaced by a cocycle, up to a bounded error. The latter is much easier to deal with.  A similar analysis is  carried out by Benoist-Quint in \cite[Sec. 2.3, 3.2, 4.3]{BQ16-CLThyp}. As a first application, we infer estimates for the exit time and overshoot of $t_{b,n}$ from a bounded interval.

We let $(\Omega, d_{\Omega}, o, \Gamma)$ be as in \Cref{Sec-intro}, and fix $\mu$ a non-elementary probability measure on $\Gamma$.

\subsection{Busemann cocycle} \label{Sec-bus-coc}

Every $z\in \Omega$ defines a so-called horofunction 
$$h_{z}: \Omega \rightarrow \R, \,y\mapsto d_{\Omega}(z,y)-d_{\Omega}(z,o)$$
and the map $\Omega \rightarrow C^0(\Omega, \R)$ is   injective continuous when $C^0(\Omega, \R)$ is given the (metrizable) compact-open topology. The \emph{Busemann compactification} $\GGb$ of $\GG$ is the closure of $\{h_{z}, z\in \GG\}$ in $C^0(\GG, \R)$. The injection $z\mapsto h_{z}$ is an embedding that identifies $\GG$ with a dense open set of $\GGb$. Moreover,  as every horofunction is $1$-Lipschitz and vanishes at the  basepoint,  $\GGb$ must be compact.  We call \emph{Busemann boundary} of $\GG$ the set $X= \GGb\smallsetminus \GG$. The $\Gamma$-action on $\GG$ extends continuously to the boundary, via the formula: $\forall x\in \GGb$, $z\in \Omega$, 
$$h_{\gamma.x}(z)=h_{x}(\gamma^{-1}z)- h_{x}(\gamma^{-1}.o) $$ 

\bigskip
\noindent{\bf Remark.} At the contrary of the Gromov boundary $\partial \Omega$, the Busemann boundary $X$ is not invariant by quasi-isometry.  For instance, let $\Gamma=F_{2}\times \Z/2\Z$ where $F_{2}$ is the group freely generated by two elements $\{u,v\}$. For the systems of generators $S_{1}= \{e, u^{\pm 1}, v^{\pm 1}\}\times \{0,1\}$, $S_{2}= \{e, u^{\pm 1}, v^{\pm 1}\}\times \{1\}$, the Busemann boundaries of the associated Cayley graphs are\footnote{To see this, call $\GG_{1}, \GG_{2}, \mathcal{H}$ the Cayley graphs of $(\Gamma, S_{1})$, $(\Gamma, S_{2})$,  $(F_{2}, \{u^{\pm1}, v^{\pm1}\})$, and $\partial_{B}\GG_{1}, \partial_{B}\GG_{2}, \partial_{B}\mathcal{H}$ their Busemann boundaries. They come with  natural projections $\GG_{i}\rightarrow \mathcal{H}, x\mapsto \overline{x}$. On the one hand, we   have $h^{\GG_{1}}_{x}(y)=  h^{\mathcal{H}}_{\x}(\y)$ as long as $d_{\mathcal{H}}(\x,\y)\geq 1$, so $\partial_{B} \GG_{1} =\partial_{B} \HH= \partial F_{2}$. However, for every $p\in F_{2}$, the horofunctions $h^{\GG_{2}}_{(p,0)}$ and $h^{\GG_{2}}_{(p,1)}$ do not have the same parity on $\Gamma$, hence the distance between them is bounded below positively independently of $p$. This implies that any sequence converging to the Busemann boundary eventually remains in one leaf $F_{2}\times \{*\}$, and the  equality $\partial \GG_{2} =\partial F_{2}\times \{0,1\}$ follows.
} respectively $\partial F_{2}$, $\partial F_{2}\times \{0,1\}$.

However, the identity map $\GG\rightarrow \GG$ extends continuously to a surjective $\Gamma$-equivariant map $X \rightarrow \partial \GG$.

\bigskip
We define the \emph{Busemann cocycle} $\sigma : \Gamma\times \GGb \rightarrow \R, (\gamma,x)\mapsto h_{x}(\gamma^{-1}.o)$. The term cocycle means that $\sigma$ satisfies the relation $\sigma(\gamma_{2} \gamma_{1},x)=\sigma(\gamma_{2},\gamma_{1}x)+\sigma(\gamma_{1},x)$ for $\gamma_{1},\gamma_{2}\in \Gamma$, $x\in \GGb$. It is also useful to observe that for all $\gamma\in \Gamma$, 
\begin{align}\label{major-coc}
\sup_{x\in \GGb}|\sigma(\gamma,x)|\leq d_{\Omega}(o,\gamma.o)
\end{align}
as  the $h_{x}$ are $1$-Lipschitz and vanish at $o$. In particular, if $\mu$ has finite $k$-th moment, then the family of variables $(\sigma(. ,x))_{x\in  \GGb }$ is uniformly bounded in $L^k(\Gamma, \mu)$. The next proposition claims that the cocycle $\sigma$  gives a good approximation of the distance to $o$ along the $\mu$-trajectories. It assumes no moment condition  on $\mu$.

\begin{prop}[Cocycle approximation] \label{cocycle-approx}
For every $\eps>0$, there exists $R>0$ such that for all $x\in X$, 
$$\beta \left(b \,:\, \sup_{n\geq 1} |\sigma(b_{n}^{-1}\dots b_{1}^{-1}, x)- t_{b,n} |  \leq R   \right) >1- \eps$$
\end{prop}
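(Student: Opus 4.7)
The plan is to translate the claim into an upper bound on a Gromov product, and then control a $(\mathcal{B}_{n})$-stopping time by combining the Gromov inequality with the deviation estimate of \Cref{deviation}.

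First, starting from the exact identity $d_{\Omega}(z,y)-d_{\Omega}(z,o)=d_{\Omega}(o,y)-2(z\,|\,y)_{o}$ and passing to the limit $z\to x$ inside $\GGb$, one obtains $h_{x}(y)=d_{\Omega}(o,y)-2(\bar x\,|\,y)_{o}$ up to an additive $O(\delta)$ error, where $\bar x\in\partial \Omega$ is the image of $x$ under the canonical surjection $X\to\partial \Omega$ and $\delta$ is the hyperbolicity constant. Writing $z_{n}=b_{1}\cdots b_{n}.o$, we have $\sigma(b_{n}^{-1}\cdots b_{1}^{-1},x)=h_{x}(z_{n})$, so the proposition reduces to proving that for every $\varepsilon>0$ there exists $R'>0$ with
$$\beta\!\left(\sup_{n\geq 1}(\bar x\,|\,z_{n})_{o}\leq R'\right)>1-\varepsilon\quad\text{uniformly in }\bar x\in\partial \Omega.$$

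Next, I would introduce the stopping time $\tau_{x}(b)=\inf\{n\geq 1:(\bar x\,|\,z_{n})_{o}>3R'\}$ (with $\tau_{x}=\infty$ when no such $n$ exists), which is adapted to $(\mathcal{B}_{n})$ since $\bar x$ is fixed, and split the event $\{\tau_{x}<\infty\}$ according to the value of $(\bar x\,|\,\xi_{b})_{o}$. On $\{(\bar x\,|\,\xi_{b})_{o}>R'\}$ the probability is at most $\nu\{\eta:(\bar x\,|\,\eta)_{o}>R'\}$, which tends to $0$ uniformly in $\bar x$ as $R'\to\infty$ by atomlessness of $\nu$ and compactness of $\partial \Omega$ (standard finite-cover argument). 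On the complement event the Gromov inequality $(\bar x\,|\,\xi_{b})_{o}\geq \min\bigl((\bar x\,|\,z_{\tau})_{o},(z_{\tau}\,|\,\xi_{b})_{o}\bigr)-\delta$ combined with $(\bar x\,|\,z_{\tau})_{o}>3R'$ forces $(z_{\tau}\,|\,\xi_{b})_{o}\leq R'+\delta$; together with $t_{b,\tau}\geq (\bar x\,|\,z_{\tau})_{o}>3R'$ and the standard $\delta$-hyperbolic identity $d_{\Omega}(z_{\tau},r_{\xi_{b}}(t_{b,\tau}))=2\bigl(t_{b,\tau}-(z_{\tau}\,|\,\xi_{b})_{o}\bigr)+O(\delta)$, this yields $d_{\Omega}(z_{\tau},r_{\xi_{b}}(t_{b,\tau}))\geq 4R'-O(\delta)$. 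Modifying $\tau_{x}$ into an a.s.\ finite stopping time (e.g.\ setting it to $1$ on $\{\tau_{x}=\infty\}$), \Cref{deviation} bounds the probability of this second piece by $\varepsilon_{4R'-O(\delta)}$, which tends to $0$ with $R'$. Choosing $R'$ so that both contributions fall below $\varepsilon/2$ gives $\beta(\tau_{x}<\infty)<\varepsilon$ uniformly in $\bar x$, whence the proposition with $R=6R'+O(\delta)$.

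The main obstacle is that the seemingly more natural stopping time $\inf\{n:d_{\Omega}(z_{n},r_{\xi_{b}}(t_{b,n}))>R\}$ is \emph{not} adapted to $(\mathcal{B}_{n})$, since $\xi_{b}$ depends on the entire future of the walk, so \Cref{deviation} cannot be applied to it directly. The trick is to threshold against the fixed reference $\bar x$ instead, and let the Gromov inequality convert an excursion of the walk toward $\bar x$ into a large deviation from the asymptotic ray toward $\xi_{b}$, exactly the input \Cref{deviation} is designed to handle. Uniformity in $x$ then comes for free, since both the atomlessness of $\nu$ and the constant $\varepsilon_{R}$ in the deviation inequality are independent of $x$.
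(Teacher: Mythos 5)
Your argument is a genuine alternative to the paper's proof, which simply invokes \cite[Proposition~3.2]{BQ16-CLThyp} applied to $(\Omega,d_\Omega)$ and $\widecheck\mu$. Your reduction to $\sup_{n\ge 1}(\bar x\,|\,z_n)_o$ via the exact identity $h_x(y)=d_\Omega(o,y)-2\lim_{z\to x}(z|y)_o$ is correct, as is the Gromov-inequality splitting: on $\{(\bar x\,|\,\xi_b)_o>R'\}$ you use atomlessness of $\nu$ and compactness of $\partial\Omega$ (the same ingredient that powers the proof of \Cref{deviation}), and on the complement the estimate $(\bar x\,|\,z_\tau)_o>3R'$, $(\bar x\,|\,\xi_b)_o\le R'$ forces $(z_\tau\,|\,\xi_b)_o\le R'+\delta$ and hence $d_\Omega(z_\tau,r_{\xi_b}(t_{b,\tau}))\ge 4R'-O(\delta)$, which is exactly the kind of input \Cref{deviation} is tailored to. The route is pleasantly self-contained — it derives \Cref{cocycle-approx} from \Cref{deviation} rather than outsourcing it — and the observation that one must threshold against the fixed reference $\bar x$ (rather than against the unadapted distance to the ray toward $\xi_b$) is precisely the right insight.

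There is one genuine flaw in the execution: $\tau_x=\inf\{n\ge 1:(\bar x\,|\,z_n)_o>3R'\}$ is $\beta$-a.s.\ \emph{infinite} (that is the whole point), and the modification you propose, ``set $\tau_x=1$ on $\{\tau_x=\infty\}$'', does \emph{not} produce a stopping time: $\{\tau_x=\infty\}=\bigcap_m\{\tau_x>m\}$ is not $\mathcal B_1$-measurable, so the modified time fails the definition and \Cref{deviation} cannot be applied to it. The fix is standard but should be stated: apply \Cref{deviation} to the bounded stopping times $\tau_x\wedge n$, which gives
$$\beta\bigl(\tau_x\le n,\ (\bar x\,|\,\xi_b)_o\le R'\bigr)\ \le\ \beta\bigl(d_\Omega(z_{\tau_x\wedge n},\,r_{\xi_b}(t_{b,\tau_x\wedge n}))\ge 4R'-O(\delta)\bigr)\ \le\ \eps_{4R'-O(\delta)}$$
uniformly in $n$ and $x$, and then let $n\to\infty$ on the left by monotone convergence. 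With this correction your argument closes, and the uniformity in $x$ holds for exactly the reason you state.
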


\begin{proof} This is  \cite[Proposition 3.2]{BQ16-CLThyp} applied to $(\Omega, d_{\Omega})$ and the image \details{$\widecheck{\mu}$} of $\mu$ by the inverse map $\gamma\mapsto \gamma^{-1}$.
\end{proof}

When $\mu$ has a finite second moment, the restriction of $\sigma$ to the boundary is known to be cohomologous to a cocycle with constant $\widecheck{\mu}$-drift \cite[Proposition 4.6]{BQ16-CLThyp}: there exist a cocycle $\sigma_{0} : \Gamma \times X \rightarrow \R$ and a \emph{bounded}  function $\psi : X \rightarrow \R$ such that 
  \begin{itemize}
     \item $\forall \gamma\in \Gamma$, $\forall x\in X$,  $$\sigma(\gamma,x)=\sigma_{0}(\gamma,x)+\psi(\gamma x)-\psi(x)$$

\item $ \forall x\in X$, $$\sum_{\gamma\in \Gamma} \sigma_{0}(\gamma^{-1},x) \mu(\gamma)=\lambda_{\mu}$$
  \end{itemize}

It is worth noting that the definition of $\psi$ is explicit: letting $\nu_{X}$ be any $\mu$-stationary probability measure on $X$, we can set 
$$ \psi(x)=\int_{X} (x \,|\,y)_{o}\,d\nu_{X}(y)$$  
where $(.\,|.)_{o}$ refers to the Gromov product extended to $X$ (see \cite[Section 2.3]{BQ16-CLThyp}).

\bigskip
\noindent{\bf Remark}. 
It will be important for us to know that $\psi$ is not only bounded, but \emph{continuous}. This property is not mentioned in \cite{BQ16-CLThyp}. However it can be deduced from the above formula, the fact that  $(.\,|.)_{o}$ is continuous, and Inequality (4.10) in \cite{BQ16-CLThyp}. Indeed the latter implies that the sub-integral $\sup_{x\in X}\int_{ \{(x \,|\,.)_{o}\geq R\}} (x \,|\,y)_{o}\,d\nu_{X}(y)=o(1)$ as $R\to +\infty$.

\subsection{Exit estimates for the norm} \label{Sec-stopping-times} \label{Sec-exit-norm}

Given $s>0$, set 
$$\tau_{s} : B\rightarrow \N\cup \{\infty\}, \,b\mapsto \inf \{k \geq 1,\, t_{b,k}\geq s \lambda_{\mu}\}.$$ 

As a first application of the cocycle approximation from \Cref{Sec-bus-coc}, we show  that  the overshoot $t_{b,\tau_{s}(b)}-s\lambda_{\mu}$ is essentially  bounded, and that $\tau_{s}$ lands roughly in a window of size $\sqrt{s}$ around $s$.

\begin{prop} \label{control-tau}
\begin{enumerate}

\item  \emph{(Overshoot control)} Assume $\mu$ has finite first moment. Then 
$$\sup_{s>0}\beta( b\,:\, t_{b,\tau_{s}(b)} >s\lambda_{\mu} +R ) \underset{R\to+\infty}{\longrightarrow} 0$$

\item  \emph{(Time control)} Assume $\mu$ has finite second moment. We then have: \break $\forall \eps>0$, $\exists R>0$, $\forall s> 1$, 
$$\beta( b\,:\,  |\tau_{s}(b) -s| > R\sqrt{s} ) \leq \eps$$

\end{enumerate}
\end{prop}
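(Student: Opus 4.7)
Fix $x\in X$ and write $\sigma_n(b)=\sigma(b_n^{-1}\cdots b_1^{-1},x)$. The cohomological reduction recalled in \Cref{Sec-bus-coc} decomposes
$$\sigma_n(b) \;=\; n\lambda_\mu + M_n(b) + \psi(b_n^{-1}\cdots b_1^{-1}x) - \psi(x),$$
where $M_n(b)=\sigma_0(b_n^{-1}\cdots b_1^{-1},x) - n\lambda_\mu$ is a $(\mathcal{B}_n)$-martingale with increments $\xi_k$ satisfying $|\xi_k|\leq d_\Omega(o,b_k.o)+2\|\psi\|_\infty+\lambda_\mu$ by (\ref{major-coc}), and $\psi$ is bounded. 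By \Cref{cocycle-approx}, for any $\eps>0$ I fix $R_0>0$ such that the event $\mathcal{E}_{R_0}=\{b:\sup_n|t_{b,n}-n\lambda_\mu-M_n(b)|\leq R_0\}$ has $\beta$-mass at least $1-\eps/3$; on $\mathcal{E}_{R_0}$, the distance $t_{b,n}$ coincides with $n\lambda_\mu+M_n$ up to a bounded error uniformly in $n$.

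For part (2), the finite second moment of $\mu$ bounds $\xi_k$ uniformly in $L^2(\beta)$: conditionally, $\E[\xi_k^2\mid\mathcal{B}_{k-1}]\leq\int(d_\Omega(o,b.o)+2\|\psi\|_\infty+\lambda_\mu)^2\,d\mu(b)<\infty$ independently of the past, so $\E M_n^2\leq Cn$ for some $C>0$. Doob's $L^2$ maximal inequality yields $\beta(\max_{n\leq 2s}|M_n|>R\sqrt{s})\leq 8C/R^2$, uniformly in $s$. Choosing $R$ with $8C/R^2<\eps/3$ and intersecting with $\mathcal{E}_{R_0}$ gives an event of $\beta$-mass $\geq 1-\eps$ on which $t_{b,n}$ stays within an $O(R\sqrt s)$-tube around $n\lambda_\mu$ on $\{0,\dots,2s\}$; this forces $\tau_s$ to lie in $[s-R''\sqrt s,\,s+R''\sqrt s]$ for some $R''=R''(R,\lambda_\mu)$. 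For $1<s\leq s_0$ bounded, $\tau_s$ is tight and the conclusion is trivial after absorbing constants into $R''$.

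For part (1), the triangle inequality applied to $o$, $b_1\cdots b_{\tau_s-1}.o$, $b_1\cdots b_{\tau_s}.o$ gives
$$t_{b,\tau_s(b)}-s\lambda_\mu \;\leq\; t_{b,\tau_s(b)}-t_{b,\tau_s(b)-1} \;\leq\; d_\Omega(o,b_{\tau_s(b)}.o)\;=:\;W_{\tau_s(b)},$$
reducing the problem to uniform-in-$s$ tightness of $W_{\tau_s(b)}$. The difficulty is that $\tau_s$ depends on $b_{\tau_s}$, so the law of $b_{\tau_s}$ is size-biased and the naive bound $\sum_k\beta(\tau_s\geq k)\beta(W_1>R)=\E[\tau_s]\,\beta(W_1>R)=O(s\,\beta(W_1>R))$ is useless. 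I would split $\{\tau_s=k,\,W_k>R\}$ geometrically according to the position $t_{b,k-1}$: if $t_{b,k-1}\leq s\lambda_\mu/2$, the triangle inequality forces $W_k\geq s\lambda_\mu/2$, so the total contribution is bounded by $\E[\tau_s]\cdot\mu\{W_1\geq s\lambda_\mu/2\}=O(s\cdot\mu\{W_1\geq s\lambda_\mu/2\})$, which tends to $0$ as $s\to\infty$ under the first moment hypothesis since $s\,\mu\{W_1>s\}\leq 2\E[W_1\mathbf{1}_{W_1>s/2}]\to 0$; if $t_{b,k-1}\in(s\lambda_\mu/2,s\lambda_\mu)$, one uses the martingale representation to control the expected annulus occupation time uniformly in $s$ and combines with the $\mathcal{B}_{k-1}$-conditional bound $\beta(W_k>R\mid\mathcal{B}_{k-1})=\beta(W_1>R)$ to obtain a contribution $\leq C\beta(W_1>R)$ independent of $s$. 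Transferring back through $\mathcal{E}_{R_0}$ concludes.

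The main obstacle is precisely the overshoot bound in part (1): one must bypass the size-biasing at $\tau_s$ without paying the $\E[\tau_s]$ overhead, and this is what the above dichotomy (deep-interior versus outer-annulus) accomplishes by exploiting the positive drift $\lambda_\mu>0$. An alternative approach, if one prefers a conceptual route, is to invoke the Markov renewal theorem for the Markov additive process $(Y_n,\sigma_n)$ with $Y_n=b_n^{-1}\cdots b_1^{-1}x$ on $X\times\R$, using that $Y_n$ admits a $\mu$-stationary probability measure on $X$ by non-elementarity and the reduced cocycle $\sigma_0$ has constant conditional drift $\lambda_\mu$; this directly yields tightness of $\sigma_{\tau_s}(b)-s\lambda_\mu$ and then of the overshoot in $t_{b,\tau_s(b)}-s\lambda_\mu$ via $\mathcal{E}_{R_0}$.
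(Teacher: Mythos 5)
Your Part (2) is correct and is in fact a cleaner route than the paper's. The paper passes through the appendix Proposition~\ref{general-tau} (time control), which uses the uniform CLT for $S_n - n$ at the fixed times $q_s=\lfloor s\pm R\sqrt s\rfloor$ plus a conditional overshoot argument. You instead observe directly that $M_n=\sigma_0(b_n^{-1}\cdots b_1^{-1},x)-n\lambda_\mu$ is a martingale with uniformly $L^2$-bounded increments and apply Doob's maximal inequality. This is both more elementary and tighter, and correctly transfers back through $\mathcal{E}_{R_0}$ and the boundedness of $\psi$.

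Your Part (1) has a genuine gap in the ``outer annulus'' case. The reduction $t_{b,\tau_s}-s\lambda_\mu\leq W_{\tau_s}:=d_\Omega(o,b_{\tau_s}.o)$ is fine, and your Case 1 ($t_{b,k-1}\leq s\lambda_\mu/2$, forcing $W_k\geq s\lambda_\mu/2$) is correctly bounded by $\E[\tau_s]\,\mu(W_1\geq\max(R,s\lambda_\mu/2))\leq (2C/\lambda_\mu)\,\E[W_1\mathbf 1_{W_1>R}]$, uniformly in $s$. But in Case 2 you assert that the expected occupation time of the annulus $(s\lambda_\mu/2,\,s\lambda_\mu)$ before $\tau_s$ is bounded uniformly in $s$. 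This is false: that annulus has width $s\lambda_\mu/2$ growing linearly in $s$, and for a Markov-additive process of positive drift $\lambda_\mu$ the expected time to traverse an interval of width $L$ is of order $L/\lambda_\mu\sim s/2$. So your Case 2 bound is of order $s\,\beta(W_1>R)$, which does not vanish as $R\to\infty$ uniformly in $s$. No fixed or linearly-growing threshold repairs this dichotomy: with a fixed-width annulus Case 2 is controlled but Case 1 blows up, and vice versa.

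The correct elementary fix is to let the threshold depend on the last jump itself. Writing $\mu_W$ for the law of $W_1$, one bounds
\[
\beta(W_{\tau_s}>R)\;\leq\;\int_{R}^{\infty}\Bigl(\sum_{k\geq1}\beta\bigl(\tau_s\geq k,\ t_{b,k-1}\in[\,s\lambda_\mu-w,\ s\lambda_\mu)\bigr)\Bigr)\,d\mu_W(w),
\]
and one then needs the renewal-type estimate that the expected occupation time of an interval of width $w$ just below the target level is $\leq C(w+1)$ \emph{uniformly in $s$}; this gives $\beta(W_{\tau_s}>R)\lesssim \E[W_1\mathbf 1_{W_1>R}]$. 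That uniform renewal bound is precisely what is nontrivial; the paper proves it (in the guise of the cocycle $\sigma$) in \Cref{Sec-general-laws} via the induction of Lemma~\ref{taus-to-tau1}, which reduces the overshoot at level $s$ to a convergent sum of unit-level overshoots, and Lemma~\ref{stoch-dom1}, which controls the unit-level overshoot via Foster's criterion $\E_x[\tau_1]\leq1$ together with the first-moment assumption. Your ``alternative'' appeal to a Markov renewal theorem would indeed yield the conclusion, but it is not a one-line invocation: one must verify its hypotheses (an ergodic stationary measure, a non-arithmetic/aperiodicity condition for the additive component, uniform positivity of the drift on $X$), and the paper deliberately avoids this by the elementary induction. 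So the architecture of your proof of (1) is sound up to the reduction to $W_{\tau_s}$, but the occupation-time claim in Case 2 must be replaced by the integrated estimate above or by the paper's induction.
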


\begin{proof}
Fix $x\in X$. According to the cocycle approximation from \Cref{cocycle-approx}, we do not loose generality  by replacing $t_{b,n}$ by $\sigma(b^{-1}_{n}\dots b^{-1}_{1}, x)$, both in the definition of $\tau_{s}$ and in the  estimates of the proposition. The time estimate  for $\sigma(b^{-1}_{n}\dots b^{-1}_{1}, x)$ follows from  \Cref{general-tau} applied to $\lambda_{\mu}^{-1}\sigma_{0}$. The overshoot control  also follows from  \Cref{general-tau} after replacing $\mu$ by a fixed convolution power $\mu^{*n_{0}}$ to reduce to the case where the drift  has a  positive lower bound uniform on $X$. \details{Indeed \Cref{general-tau} is applicable for $\mu^{*n_{0}}$ if  $n_{0}$ is large:  for any $n_{0}\geq 1$, the family $(\sigma(.,x))_{x\in X}$ is uniformly bounded in $L^1(\Gamma, \mu^{*n_{0}})$ and for large $n_{0}$,  for all $x\in X$
$$\int_{\Gamma} \sigma(\gamma^{-1}, x) \,d\mu^{*n_{0}}(\gamma) \geq \frac{1}{2} n_{0}\lambda_{\mu}$$
due to \Cref{cocycle-approx}, the convergence  $n^{-1}t_{b,n}\rightarrow \lambda_{\mu}$ in $L^1(\Gamma^{\N^*}, \mu^{\otimes \N^*})$, and the domination $\sup_{x\in X} |\sigma(\gamma, x)|\leq d_{\Omega}(o,\gamma.o)$. }
\end{proof}

\section{Limit laws for  geodesic rays} \label{Sec-laws-projected}

This section is dedicated to the proof of \Cref{laws-projected} and \Cref{laws-projected-weak}. Several parts of the proof can be extended to general limit theorems for cocycles. Those are postponed to \Cref{Sec-general-laws}. 

\subsection{Law of large numbers}

We establish the law of large numbers  for  $\nu$-typical geodesic rays on $\Omega$ projected to $\R^d$. We argue under various  conditions of regularity on the projection $i$.

\bigskip
For  $i$  quasi-Lipschitz, we prove the strong law of large numbers in \Cref{laws-projected}. The proof uses the sublinear geodesic tracking from \Cref{geod-track}.
\begin{proof}[Proof of \Cref{laws-projected} (LLN)]

By \Cref{geod-track} and the quasi-Lipschitz assumption on $i$, we have that for $\beta$-almost every $b\in B$, $n\geq 2$, 
$$\|i (b_{1}\dots b_{n}.o)-i\circ r_{\xi_{b}}(t_{b,n})\| =o(  n)  $$
or in other terms
$$\|\pi (b_{1}\dots b_{n})-i\circ r_{\xi_{b}}(t_{b,n})\| =o(  n)  $$
as $i (b_{1}\dots b_{n}.o)=\pi(b_{1}\dots b_{n})+i(o)$.
\details{Note that the assumption of finite first moment on $\mu$ for a word metric implies that its projection $\pi_{\star}\mu=\mu_{ab}$ has a finite first moment as  a measure on $\R^d$.} By the law of large numbers for the $\mu_{ab}$-walk on $\R^d$ and the definition of the rate of escape $\lambda_{\mu}$, we  know that 
$$\pi (b_{1}\dots b_{n})=n \E(\mu_{ab}) +o(n)\,\,\,\,\,\,\,\,\,\,\,\,\,\,\,\,\,\,\,\,\,\,\,\, t_{b,n} =n\lambda_{\mu} +o(n)  $$
Combining these estimates, we get
$$\|n \E(\mu_{ab})-i\circ r_{\xi_{b}}(n\lambda_{\mu})\| = o(n)  $$
which implies that $i\circ r_{\xi_{b}}(n\lambda_{\mu}) = n\lambda_{\mu} e_{\nu} +o(n)$. The result follows for continuous times $t\in \R^+$ as $i$ is quasi-Lipschitz. 

\end{proof}

For $i$ locally bounded, we prove the weak law of large numbers in \Cref{laws-projected-weak}. The proof relies on the deviation inequality with stopping time from \Cref{deviation} and on the overshoot control in \Cref{control-tau}. 

\begin{proof}[Proof of \Cref{laws-projected-weak} (WLLN)]
For $n\geq 0$, recall the stopping time
$$\tau_{n} : B\rightarrow \N\cup \{\infty\}, \,b\mapsto \inf \{k \geq 1,\, t_{b,k}\geq n \lambda_{\mu}\}.$$ 
The deviation estimate  from \Cref{deviation} guarantees that for every $\eps>0$, there exists $R>0$ such that for all $n\geq 0$, 
$$\beta\{b\,:\, d(b_{1}\dots b_{\tau_{n}(b)}.o, r_{\xi_{b}}(t_{b,\tau_{n}(b)}))\leq R  \}\geq 1-\eps $$
Using \Cref{control-tau}, we may increase $R$ to also have for every $n\geq 0$, 
 $$\beta\{b\,:\, d(b_{1}\dots b_{\tau_{n}(b)}.o, r_{\xi_{b}}(n\lambda_{\mu}))\leq R  \}\geq 1-\eps $$

Using that $i$ is locally bounded and $\Gamma$-equivariant, we can see that those $b$ satisfy  $\|\pi(b_{1}\dots b_{\tau_{n}(b)})-i\circ r_{\xi_{b}}(n\lambda_{\mu})\|\leq C_{R}$ for some constant $C_{R}$ only depending on $(i,R)$.  
Hence, it is enough to show the almost-sure convergence
\begin{equation}
 \frac{\pi(b_{1}\dots b_{\tau_{n}(b)})}{n}\to \lambda_{\mu}e_{\nu}=\E(\mu_{ab}) \label{WLLN}
 \end{equation}
The equivalence $t_{b,n}\sim n\lambda_{\mu}$ implies $\tau_{n}\sim n$. Hence \eqref{WLLN} follows from the law of large numbers for $\mu_{ab}$ on $\R^d$. 
\end{proof}

\subsection{Central limit theorem} \label{Sec-proof-CLT}

We prove the central limit theorem from \Cref{laws-projected-weak}.

\begin{proof}[Proof of \Cref{laws-projected-weak} (CLT)]

We want to show that, for  $b$ varying with law $\beta$,  the family of random variables 
\begin{align} \label{CLT1}
\frac{i \circ r_{\xi_{b}}(s \lambda_{\mu}) - s \lambda_{\mu}e_{\nu}}{\sqrt{s}} 
\end{align}
converges to a (possibly degenerate) Gaussian distribution as $s\to+\infty$. By \Cref{deviation} and \Cref{control-tau} (overshoot control), for any $\eps>0$, there exists $R>0$ such that for any $s>0$, 
$$\beta(b\,:\,d(b_{1}\dots b_{\tau_{s}(b)}.o, r_{\xi_{b}}(s\lambda_{\mu}))  \leq R) \geq 1-\eps$$
Using that $i$ is locally bounded and $\Gamma$-equivariant, we deduce that, up to increasing $R$,  for any $s>0$, 
$$\beta(b\,:\,|\pi(b_{1}\dots b_{\tau_{s}(b)}) -i\circ r_{\xi_{b}}(s\lambda_{\mu})|   \leq R) \geq 1-\eps$$
Hence,   the CLT for $\nu$-typical geodesic rays  in  \eqref{CLT1} amounts to that of 
\begin{align} \label{CLT2}
\frac{\pi(b_{1}\dots b_{\tau_{s}(b)}) - s \lambda_{\mu}e_{\nu}}{\sqrt{s}} 
\end{align}
Fix $x\in \BG$. By \Cref{control-tau} (overshoot control) and \Cref{cocycle-approx},  the family of variables 
$$\frac{s\lambda_{\mu} - \sigma(b^{-1}_{\tau_{s}(b)} \dots b^{-1}_{1}, x )}{\sqrt{s}}$$
  converges to zero in probability as $s\to+\infty$. 
  Hence we just need to check the CLT for 
\begin{align} \label{CLT2}
\frac{\pi(b_{1}\dots b_{\tau_{s}(b)}) -  \sigma(b^{-1}_{\tau_{s}(b)} \dots b^{-1}_{1}, x )e_{\nu}}{\sqrt{s}}.
\end{align}
We introduce the cocycle $\tsigma : \Gamma\times \BG\rightarrow \R^d$ defined by 
$$\tsigma(g,x)= \pi(g)+\sigma_{0}(g,x)e_{\nu} $$
 This cocycle satisfies $\|\sigma(g,x)\|\ll |g|'$ for $|g|'=\|\pi(g)\|+d(o,g.o)$ and has constant zero drift for $\widecheck{\mu}$. It is also continous in the $\BG$-variable because $\sigma$ is continous and the function $\psi$ relating $\sigma$ and $\sigma_{0}$ is continuous. Finally \Cref{cocycle-approx} confirms that $\sigma$ is $(\widecheck{\mu}, \BG)$-controlled. These observations ensure we may apply our CLT with stopping time from the appendix (\Cref{general-CLT+}) to  $\tsigma$.  It tells us that   
 \begin{align*} 
\frac{i \circ r_{\xi_{b}}(s \lambda_{\mu}) - s \lambda_{\mu}e_{\nu}}{\sqrt{s}}  \underset{s\to+\infty}{\wconv} \mathscr{N}(0, \lambda_{\mu}A_{\nu})
\end{align*}
where $\lambda_{\mu} A_{\nu}$ is the limit, uniformly in $x\in \BG$:
 \begin{align*} 
\lambda_{\mu} A_{\nu}= \lim \frac{1}{n} \int_{G} \tsigma(g,x) \,{^t \tsigma(g,x)} \,d\widecheck{\mu}^{*n}(g)
 \end{align*}
Using the cocycle relation and that $\tsigma$ has zero drift for $\widecheck{\mu}$, we can replace $\sigma_{0}$  in the above limit  by the original Busemann cocycle $\sigma$:
 \begin{align*} 
\lambda_{\mu}  A_{\nu}= \lim \frac{1}{n} \int_{G} (\pi(g)+\sigma(g,x)e_{\nu})\,\, {^t(\pi(g)+\sigma(g,x)e_{\nu})} \,\,d\widecheck{\mu}^{*n}(g)
 \end{align*}
 This limit is still uniform in $x\in \BG$. 

   \end{proof}

  \subsection{The covariance matrix}
  
 We comment on the covariance matrix $A_{\nu}$  of the limit distribution in the CLT. 
  
  \bigskip
  
  We start with   a  criterion of non-degeneracy for $A_{\nu}$.  Recall that the \emph{stable length} of an element $\gamma\in \Gamma$ is the quantity $l(\gamma)=\lim_{n\to +\infty} \frac{1}{n}d_{\Omega}(o,\gamma^n.o)$. It is well defined by subadditivity. One can see it as an analog of the logarithm of the spectral radius for matrices in $SL_{d}(\R)$. 
  
  \begin{lemma}[Non-degeneracy criterion] \label{ND-criterion}
   If $\det(A_{\nu})= 0 $, then   
  $\langle \supp \mu_{ab} \rangle_{\text{Vect}}$ is a \emph{proper} subspace of $\R^d$ or there exists $\varphi \in (\R^d)^*$ such that $\varphi \circ \pi(g)=l(g)$ on   the semigroup generated by the support of $\mu$. 
  \end{lemma}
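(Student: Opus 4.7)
The plan is to translate the degeneracy $\det A_\nu=0$ into a coboundary equation for a carefully chosen real cocycle over $X$, and then extract the desired identity $\varphi\circ\pi=l$ by iterating that equation on powers of elements of the semigroup generated by $\supp\mu$.

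\emph{Step 1 (a centered real cocycle with vanishing variance).} Since $A_\nu$ is symmetric positive semidefinite, $\det A_\nu=0$ is equivalent to the existence of $\varphi\in(\R^d)^*\setminus\{0\}$ with $\varphi^T A_\nu\,\varphi=0$. Set $\psi:=\varphi\circ\pi:\Gamma\to\R$ (a homomorphism) and $c:=\varphi(e_\nu)\in\R$. Pairing the explicit formula for $A_\nu$ recalled in \Cref{Sec-proof-CLT} with $\varphi$ gives, uniformly in $x\in X$,
\[ \lambda_\mu\,\varphi^T A_\nu\,\varphi \;=\; \lim_{n\to+\infty}\frac{1}{n}\int_\Gamma \bigl(\psi(g)+c\,\sigma(g,x)\bigr)^2\,d\widecheck{\mu}^{*n}(g). \]
The integrand $F(g,x):=\psi(g)+c\,\sigma(g,x)$ is a $\Gamma$-cocycle on $X$, differing from the zero-drift cocycle $\varphi\circ\tsigma=\psi+c\sigma_0$ used in the CLT proof only by a bounded $x$-coboundary; in particular $F$ is $\widecheck{\mu}$-centered, and the hypothesis $\varphi^T A_\nu\varphi=0$ reads as \emph{zero asymptotic $\widecheck{\mu}$-variance of $F$}.

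\emph{Step 2 (coboundary).} I invoke the non-degeneracy criterion proved in \Cref{Sec-general-laws}: a centered real cocycle with zero asymptotic variance is cohomologous to zero via a bounded continuous function. This yields $\eta:X\to\R$ bounded continuous with
\[ \psi(g)+c\,\sigma(g,x)\;=\;\eta(g.x)-\eta(x) \]
for every $x\in X$ and every $g\in\supp\widecheck{\mu}=(\supp\mu)^{-1}$; the cocycle property extends the identity to every $g$ in the semigroup generated by $(\supp\mu)^{-1}$.

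\emph{Step 3 (iteration and geometric limit).} Let $h$ belong to the semigroup $\Gamma_\mu$ generated by $\supp\mu$. Applying the coboundary equation to $g=h^{-n}$ and using $\psi(h^{-n})=-n\psi(h)$, then dividing by $n$, gives
\[ -\psi(h)+c\,\frac{\sigma(h^{-n},x)}{n}\;=\;\frac{\eta(h^{-n}.x)-\eta(x)}{n}\;\xrightarrow[n\to\infty]{}\;0, \]
since $\eta$ is bounded. By definition $\sigma(h^{-n},x)=h_x(h^n.o)$. If $h$ is hyperbolic with attracting fixed point $\xi_+\in\partial\Omega$, then $h^n.o\to\xi_+$ at linear speed $l(h)$, and a Gromov-product computation on $\Omega$ yields $h_x(h^n.o)/n\to l(h)$ for every $x\in X$ whose projection to $\partial\Omega$ differs from $\xi_+$; non-atomicity of $\nu$ (ensured by non-elementariness of $\mu$) makes such $x$ exist. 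If $h$ is non-hyperbolic, then $l(h)=0$ and $|h_x(h^n.o)|\le d_\Omega(o,h^n.o)=o(n)$, so the limit $l(h)$ still holds. Altogether, $\psi(h)=c\,l(h)$ for every $h\in\Gamma_\mu$.

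\emph{Step 4 (dichotomy and main obstacle).} If $c=\varphi(e_\nu)=0$, then $\psi\equiv 0$ on $\Gamma_\mu$, whence $\varphi$ vanishes on $\pi(\Gamma_\mu)\supseteq\supp\mu_{ab}$; since $\varphi\neq 0$, this forces $\langle\supp\mu_{ab}\rangle_{\text{Vect}}\subsetneq\R^d$. If $c\neq 0$, then $\varphi':=\varphi/c\in(\R^d)^*$ satisfies $\varphi'\circ\pi(g)=l(g)$ on $\Gamma_\mu$. Either way, one of the two alternatives of the lemma holds. The main hurdle is Step 2: one needs the appendix to deliver a coboundary with enough regularity (boundedness and continuity) for the pointwise iteration in Step 3 to succeed on the deterministic orbit $\{h^{-n}.x\}_n$; the remainder is a routine unpacking of the Busemann cocycle in a Gromov hyperbolic space.
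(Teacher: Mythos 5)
Your Steps 1, 3, and 4 follow the paper's route closely, and the final conclusion is correct. The problem lies in Step 2. You cite a ``non-degeneracy criterion proved in \Cref{Sec-general-laws}'' to the effect that a centered real cocycle with zero asymptotic variance is cohomologous to zero via a \emph{bounded continuous} function. No such statement appears in the appendix, which contains only the CLT, the LIL, the PLD, the gambler's ruin, and exit estimates; moreover the lemma you are proving \emph{is} the ``non-degeneracy criterion,'' so the reference would be circular. The gap is substantive, not merely bibliographic: deducing a \emph{bounded continuous} coboundary from vanishing asymptotic variance is a genuine regularity assertion, and it is not what the paper establishes or needs.

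What the paper does instead is more direct. By Remark 1) following \Cref{general-CLT} (``unique covariance''), the limit covariance matrix is not merely an asymptotic average but satisfies the exact identity
$$\lambda_\mu A_\nu=\frac{1}{n}\int_{\Gamma\times X}\bigl(\pi(\gamma)+\sigma_0(\gamma,x)e_\nu\bigr)\,{}^t\bigl(\pi(\gamma)+\sigma_0(\gamma,x)e_\nu\bigr)\,d\widecheck{\mu}^{*n}(\gamma)\,d\nu_X(x)$$
for every $n\ge1$ and every $\widecheck{\mu}$-stationary $\nu_X$. Pairing with $\varphi$ and using positivity of the integrand, $\varphi A_\nu{}^t\varphi=0$ forces $\varphi\circ\pi(\gamma)+\sigma_0(\gamma,x)\varphi(e_\nu)=0$ for $\nu_X$-a.e.\ $x$ and \emph{every} $\gamma$ in the semigroup generated by $\supp\widecheck{\mu}$ --- a pointwise (a.e.) identity, obtained with no coboundary argument. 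The bounded continuous cocycle transfer you want then comes \emph{for free} from the pre-established relation $\sigma=\sigma_0+\psi(g\cdot)-\psi$, where $\psi$ was built explicitly from the Gromov product in \Cref{Sec-bus-coc}. Note also that the paper only gets the identity for $\nu_X$-a.e.\ $x$; it then uses non-atomicity of the projection of $\nu_X$ to $\partial\Omega$ to pick two points $x,y$ with distinct boundary projections and the estimate $\max(\sigma(g,x),\sigma(g,y))\ge d_\Omega(o,g.o)-C$, which is a cleaner route to the limit $\tfrac{1}{n}\sigma(\gamma^n,x)\to l(\gamma)$ than your hyperbolic/non-hyperbolic case split (and sidesteps the issue that your fixed $x$ must both lie in a full-measure set and project away from $\xi_+(h)$). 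If you replace Step 2 with the unique-covariance argument, the rest of your proof goes through and matches the paper.
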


  \begin{proof}
  Assume $\det(A_{\nu})= 0 $ and $\langle \supp \mu_{ab} \rangle_{\text{Vect}} =\R^d$. By the first hypothesis, there exists a non zero linear form $\varphi$ such that $\varphi A_{\nu}=0$. By Remark 2) following \Cref{general-CLT}, we can rewrite for every $n\geq 1$, any $\widecheck{\mu}$-stationary measure $\nu_{\BG}$ on $\BG$:
  $$\lambda_{\mu}A_{\nu}= \frac{1}{n}\int_{\Gamma\times \BG} (\pi(\gamma)+\sigma_{0}(\gamma,x)e_{\nu}) \,{^t}(\pi(\gamma)+\sigma_{0}(\gamma,x)e_{\nu})\,d{\widecheck{\mu}}^{*n}(\gamma)d\nu_{\BG}(x).$$
Hence,  $\varphi A_{\nu} {^t\varphi}=0$ reads as 
 $$\int_{\Gamma \times \BG} \left(\varphi\circ   \pi(\gamma)+\sigma_{0}(\gamma,x) \varphi(e_{\nu})   \right)^2 d\widecheck{\mu}^{*n}(\gamma) \,d\nu_{\BG}(x)=0.  $$
Call $\Gamma_{\widecheck{\mu}}\subseteq \Gamma$ the semigroup  generated by the support of $\widecheck{\mu}$. We deduce that for $\nu_{\BG}$-almost every $x\in \BG$, for all $\gamma\in \Gamma_{\widecheck{\mu}}$, we have  
$$\varphi\circ   \pi(\gamma) =-\sigma_{0}(\gamma,x) \varphi(e_{\nu}).  $$
As  $\pi(\Gamma_{\widecheck{\mu}})$ spans $\R^d$ by assumption and $\varphi\neq 0$, we must have $\varphi(e_{\nu})\neq 0$, so we may assume $\varphi(e_{\nu})=1$. Using that the difference  $|\sigma-\sigma_{0}|$ is bounded, we infer that  for  $\nu_{\BG}$-almost every $x$, for $\gamma\in \Gamma_{\widecheck{\mu}}$, $n\geq 1$, 
 $$\varphi \circ \pi(\gamma)=\frac{1}{n}\varphi \circ \pi(\gamma^n)= -\frac{1}{n}\sigma_{0}(\gamma^n,x)=-\frac{1}{n}\sigma(\gamma^n,x)+o(1).$$ 
 \details{As the projection of $\nu_{\BG}$  to the Gromov boundary $\partial \Omega$ is the Furstenberg measure of $\mu$ and the latter has no atom, there exist in particular $x, y\in \BG$ with distinct projections on $\partial \Omega$ such that for all $\gamma\in \Gamma_{\widecheck{\mu}}$, $n\geq 1$, 
 $$\varphi \circ \pi(\gamma)= -\frac{1}{n} \max [\sigma(\gamma^n,x), \sigma(\gamma^n,y)]+o(1).$$ 
 However, given such $x,y$, there exists $C>0$ such that 
 $$ d_{\Omega}(o,g.o)\geq \max(\sigma(g,x), \sigma(g,y)) \geq d_{\Omega}(o,g.o)- C$$ 
 for all $g$ \cite[(4.3)]{BQ16-CLThyp}. Letting $n$ go to infinity, we get $\varphi \circ \pi(\gamma)=-l(\gamma)$.
As $l(\gamma^{-1})=l(\gamma)$ and $-\varphi \circ \pi(\gamma)=\varphi \circ \pi(\gamma^{-1})$, the result follows. 
}

  \end{proof}

  We also give an example showing that the rank of $A_{\nu}$ may be strictly greater  than the  rank of the covariance matrix of $\mu_{ab}$.  
  
  \begin{lemma} \label{exampleAnu}
 Let  $\Gamma$ be the free group with two generators $u,v$, write $\Omega$ the Cayley graph associated to $\{u,v\}$, and let $i: \Omega\rightarrow \R^2$ continuous $\Gamma$-equivariant such that $i(u)=(1,0)$, $i(v)=(0,1)$. For  $\mu=\frac{1}{3}(\delta_{u}+\delta_{uv}+\delta_{uv^{-1}})$ we have
   $$\det(Cov(\mu_{ab}))=0 \neq \det(A_{\nu})$$ 
  \end{lemma}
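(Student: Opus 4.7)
The plan is to handle the two inequalities independently: the vanishing of $\det Cov(\mu_{ab})$ is an immediate computation from the definition of $\mu$, while the nonvanishing of $\det A_\nu$ falls out of the non-degeneracy criterion \Cref{ND-criterion} just established.

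The first step is nearly a one-liner. The pushforward $\mu_{ab}=\pi_\star\mu$ is the uniform measure on $\{(1,0),(1,1),(1,-1)\}\subset \R^2$, whose first coordinate is almost surely equal to $1$. Hence this coordinate has zero variance and is uncorrelated with the second, so $Cov(\mu_{ab})$ has a vanishing diagonal entry and a zero determinant.

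The second step is the substantive part, though still short. I would invoke \Cref{ND-criterion} and verify its two sufficient conditions for non-degeneracy. First, $\pi(\supp\mu)$ spans $\R^2$, since $(1,1)$ and $(1,-1)$ are linearly independent. Second, I need to exclude a linear form $\varphi\in(\R^2)^*$ with $\varphi\circ\pi(g)=l(g)$ for every $g$ in the semigroup generated by $\supp\mu=\{u,uv,uv^{-1}\}$. In a free group acting on its tree Cayley graph, the stable length coincides with the cyclically reduced word length, so $l(u)=1$ and $l(uv)=l(uv^{-1})=2$. Testing the putative $\varphi$ on these three elements gives $\varphi(1,0)=1$, $\varphi(1,1)=2$, $\varphi(1,-1)=2$; the last two equations force $\varphi(0,1)=1$ and $\varphi(0,1)=-1$ simultaneously, a contradiction. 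Hence $\det A_\nu\neq 0$.

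The only preliminary worth a sanity check is that $\mu$ is non-elementary, so that the CLT and \Cref{ND-criterion} apply: this holds because $u$ and $uv$ are loxodromic on the tree with distinct translation axes, hence have disjoint pairs of fixed points on $\partial F_2$. I do not expect any genuine obstacle, as the example is engineered precisely so that the degeneracy of $\mu_{ab}$---whose support sits on the affine line $\{x_1=1\}$---is broken by the stable length, which takes genuinely different values on the three support elements and therefore cannot be affine in $\pi$.
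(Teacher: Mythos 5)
Your proof is correct and follows essentially the same route as the paper: compute $\mu_{ab}$ and note its support lies on the affine line $\{x_1=1\}$ to get $\det Cov(\mu_{ab})=0$, then apply \Cref{ND-criterion} and derive a contradiction from the stable-length equations $\varphi(1,0)=1$, $\varphi(1,1)=2$, $\varphi(1,-1)=2$. You are slightly more explicit than the paper in checking the ancillary hypotheses (that $\pi(\supp\mu)$ spans $\R^2$, that $\mu$ is non-elementary, and that stable length on a free group equals cyclically reduced word length), but the core argument is identical.
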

  
  \begin{proof}
 We have $\det(Cov(\mu_{ab}))=0$ because $\mu_{ab}$ is supported on an affine hyperplane of $\R^2$. Assume by contradiction that $\det(A_{\nu})=0$. By  \Cref{ND-criterion}, there exists $\varphi \in (\R^2)^*$ such that $\varphi \circ \pi(g)=l(g)$ on the semigroup generated by the support of $\mu$. Writing $e_{1}=(1,0)$ and $e_{2}=(0,1)$, we get  the equations 
 $$\varphi(e_{1}) =1, \,\,\,\,\varphi(e_{1}+e_{2}) =2, \,\,\,\,\varphi(e_{1}-e_{2}) =2.$$
 These are not compatible, whence the contradiction. 
 
  \end{proof}
  
  \subsection{Law of the iterated logarithm}

  We prove the law of the iterated logarithm  in  \Cref{laws-projected}. 
  
  \begin{proof}[Proof of \Cref{laws-projected} (LIL)]
  According to \Cref{geod-track} and the second moment assumption on $\mu$, one has for $\beta$-almost every $b\in B$, 
  $$d_{\Omega}(b_{1}\dots b_{n}.o,\, r_{\xi_{b}}(t_{b,n}))=o(\sqrt{n}) $$
 By the quasi-Lipschitz assumption on $i$, we infer
    $$  i\circ r_{\xi_{b}}(t_{b,n}) = \pi(b_{1}\dots b_{n}) +o(\sqrt{n}) $$

On the other hand, \Cref{cocycle-approx} yields that for a fixed $x\in X$, for $\beta$-almost every $b\in B$, we have 
$$t_{b,n}e_{\nu} = \sigma(b_{n}^{-1}\dots b_{1}^{-1}, x)e_{\nu} + O(1) $$

Putting these together, we get
  \begin{align*}
 \frac{i\circ r_{\xi_{b}}(t_{b,n})-t_{b,n} e_{\nu}}{\sqrt{2 t_{b,n} \log \log t_{b,n}}} =  \frac{-\tsigma(b^{-1}_{n}\dots b^{-1}_{1}, x)+o(\sqrt{n})}{(1+o(1))\sqrt{2\lambda_{\mu}n \log \log n}}
  \end{align*}
  where $\tsigma$ is the cocycle introduced in \Cref{Sec-proof-CLT}. 
 \details{We checked earlier in \Cref{Sec-proof-CLT} that $\tsigma$ satisfies the conditions of application of the general LIL for cocycles from \Cref{general-LIL}}. By this LIL, for $\beta$-almost every $b\in B$, 
    \begin{align}  \label{LIL-ray} 
  \left\{\text{ Accumulation points of } \left(\frac{i \circ r_{\xi_{b}}(t) - te_{\nu}}{\sqrt{2t \log \log t}}\right)_{t\in \{t_{b,n}, n\in \N \} } \right\} = A^{1/2}_{\nu}(\overline{B}(0,1)) 
    \end{align}

   We just need to check there are no other accumulation points. As $\mu$ has finite second  moment, an application of Borel-Cantelli Lemma yields for $\beta$-almost every $b\in B$, 
$$|t_{b,n+1}-t_{b,n}| =o(\sqrt{n})$$ 
In particular, for large $n$, 
$$|t_{b,n+1}-t_{b,n}| \leq  \sqrt{ t_{b,n}}$$
It follows that for $b$ typical, the intervals $$I_{n}(b)=[t_{b,n}-  \sqrt{ t_{b,n}}, \,\,t_{b,n}+ \sqrt{ t_{b,n}}]$$ 
cover a translate of $\R_{+}$. Moreover, the condition that $i$ is quasi-Lischitz implies that for $t$ varying inside  $I_{n}(b)$, the quantity $\frac{i\circ r_{\xi_{b}}(t)-te_{\nu}}{\sqrt{2t\log \log t}}$ is constant up to an error that tends to $0$ as $n$ goes to infinity. This implies that all the accumulation points of  $(\frac{i\circ r_{\xi_{b}}(t)-te_{\nu}}{\sqrt{2t\log \log t}})_{t>2}$ are attainable along a sequence of times  among the $t_{b,n}$'s.  The LIL now follows from  \eqref{LIL-ray}.   

  \end{proof}

\subsection{Principle of large deviations}

We prove the principle of large deviations in \Cref{laws-projected}.

\begin{proof}[Proof of \Cref{laws-projected} (PLD)]
As $i$ is quasi-Lipschitz, it is enough  to prove that, for every $\eps>0$, there exist $D, \delta>0$ such that for all $n\geq 0$, 
$$\beta \{b : \|i \circ r_{\xi_{b}}(n\lambda_{\mu})- n \lambda_{\mu}e_{\nu} \|\geq  n\eps \}\leq De^{-\delta n} $$
But we have the bound 
\begin{align*}
&\|i \circ r_{\xi_{b}}(n\lambda_{\mu})- n \lambda_{\mu}e_{\nu} \| \\
& \leq \|i \circ r_{\xi_{b}}(n\lambda_{\mu})-i \circ r_{\xi_{b}}(t_{b,n})\|+ \|i \circ r_{\xi_{b}}(t_{b,n})-i(b_{1}\dots b_{n}.o)\| +\|i(b_{1}\dots b_{n}.o)- n \lambda_{\mu}e_{\nu} \|\\
&\ll |n\lambda_{\mu} - t_{b,n}| + d(r_{\xi_{b}}(t_{b,n}), b_{1}\dots b_{n}.o)  +\|\pi(b_{1}\dots b_{n})- n \lambda_{\mu}e_{\nu} \|  +1
\end{align*}
It is then enough to prove a PLD for each of the terms on the right-hand side. The first term is treated by the PLD for the distance to the origin   \cite[Theorem 1.1]{BoulMathSertSisto21} , 
the second by the exponential deviation estimate in \Cref{deviation} for $\tau=n$, and the third by the classical PLD applied to $\mu_{ab}$. This concludes the proof. 

\end{proof}

  \subsection{Gambler's ruin}

We  prove the gambler's ruin estimate in \Cref{laws-projected}. In particular $d=1$. 

 \begin{proof}[Proof of \Cref{laws-projected} (GR)]
Let $\eps>0$. It is enough to show that 
\begin{align} \label{liminfGR}
\liminf_{s\to +\infty} \beta \left\{b : (\pi\circ r_{\xi_{b}}(t) -t e_{\nu})_{t>0} \text{ meets  $[l_{s}, +\infty)$ before $(-\infty, -k_{s}]$}   \right\} \, > \, \frac{k}{k+l} -\eps
\end{align}
Indeed, the lower bound for the $\limsup$ follows by symmetry, and one concludes letting $\eps$ go to $0^+$.

Let us establish \eqref{liminfGR}. Fix a (large)  parameter $C>1$ to be specified below, and $x\in \BG$. Write $M_{n}(b)=-\tsigma(b_{n}^{-1}\dots b^{-1}_{1}, x)=\pi(b_{1}\dots b_{n}) - \sigma_{0}(b^{-1}_{n}\dots b^{-1}_{1}, x)e_{\nu}$ the martingale associated to $x$.  For $s>C$, let $E_{s}$ be the set of elements $b\in B$ such that 
\begin{itemize}
\item $\forall j\in \N^*$, $|M_{j}(b) - (\pi(b_{1}\dots b_{j})-t_{b,j}e_{\nu}) | \leq C $
\item $\forall j\leq s^3$, $| \pi(b_{1}\dots b_{j})- i \circ r_{\xi_{b}}(t_{b,j})|\leq C \log s$
\item $\forall j\leq s^3$, $|t_{b,j+1}-t_{b,j}|\leq C \log s$
\end{itemize}
According to  \Cref{cocycle-approx}, \Cref{geod-track}, and the exponential moment assumption on $\mu$, we can choose $C=C(\eps)>1$ large enough so that for every $s>C$, one has $$\beta(E_{s})>1-\eps$$ 
 We can also assume that $i$ is $C$-quasi-Lipschtiz.
Let $F_{s}$ be the set of $b\in B$ such that $M_{n}(b)$ meets  $[l_{s} +2C\log s, +\infty)$ before $(-\infty, -k_{s}+3C^2\log s+3C]$ and during the time interval $[0, s^3]$. Lemmas \ref{exit}, \ref{general-GR} from the appendix \details{can be applied thanks to \Cref{cocycle-approx} and yield
$$ \beta(F_{s})  \underset{s\to +\infty}{\longrightarrow} \frac{k}{k+l}$$}

Finally we  check that for $s>C$, $b\in E_{s}\cap F_{s}$, the recentered projected geodesic ray  $(\pi\circ r_{\xi_{b}}(t) -t e_{\nu})_{t> 0}$ meets  $[l_{s}, +\infty)$ before $(-\infty, -k_{s}]$. Indeed, let $j_{0}\leq s^3$ minimal such that $M_{j_{0}}(b) \geq l_{s}+2C\log s$. Using the first two items in the definition of $E_{s}$, we get that $\pi\circ r_{\xi_{b}}(t_{b,j_{0}})-t_{b,j_{0}}e_{\nu}\geq l_{s}$. 

We also check that $(i\circ r_{\xi_{b}}(t) -t e_{\nu})_{t> 0}$ does not meet $(-\infty, -k_{s}]$ for $t<t_{b,j_{0}}$. Suppose by contradiction it is the case: $\exists t_{1}<t_{b,j_{0}}$, 
$$i\circ r_{\xi_{b}}(t_{1}) -t_{1} e_{\nu}<-k_{s}$$
  Let  $0\leq j_{1}< j_{0}$ such that $t_{b,j_{1}}\leq t <t_{b,j_{1}+1}$ and observe that by the last item in the definition of $E_{s}$, we have $|t-t_{b,j_{1}}|\leq C\log s$. Using that $i: \GG\rightarrow \R^d$ is $C$-quasi-Lipschitz, we get 
  $$i\circ r_{\xi_{b}}(t_{b,j_{1}}) -t_{b,j_{1}}e_{\nu} \leq -ks+ 2C^2\log s+2C$$
  Then first two items in the definition of $E_{s}$  yield $M_{j_{1}}(b) \leq -k_{s} +3C^2\log s +3C$. Hence a contradiction.

\end{proof}

\section{Winding and Patterson-Sullivan measures}

\label{Sec-winding-hyperbolic}

The goal of this section is to prove Corollaries \ref{laws-hyperbolic}, \ref{singularity}. We first recall basic notions of hyperbolic geometry. 
\bigskip

Let $\Hn$  be the the $N$-dimensional real hyperbolic space, $\Gamma \leq Isom^+(\Hn)$ a non-elementary  discrete subgroup of orientation preserving isometries. \details{If the orbits of $\Gamma$ in $\Hn$ are not too sparse, one can associate to $\Gamma$ a certain class of measures on the visual boundary $\partial \Hn\simeq \mathbb{S}^{n-1}$ called Patterson-Sullivan measures. We  recall their definition, for a more general exposition see \cite{Roblin03}. }

The discrete group $\Gamma$ comes with  a \emph{limit set} $\Lambda_{\Gamma}\subseteq \partial\Hn$ and a \emph{critical exponent} $\delta_{\Gamma}\in \R^+$ defined by 
$$\Lambda_{\Gamma}=\overline{\Gamma.z}\cap \partial\Hn  \,\,\,\,\,\,\,\,\,\,\,\,\,\,\,\,\,\,\,\,\,\,\,\,\,\,\,\,\,\,\,\,\,\,\,\,\delta_{\Gamma} =\lim_{R\to+\infty} \frac{1}{R}\log \sharp B_{\Hn}(z,R)\cap \Gamma.z $$ 
 where $z\in \Hn$ is any point, $\overline{\Gamma.z}$ is the closure of $\Gamma.z$ in $\overline{\Hn}=\Hn\cup \partial \Hn$,  and $B_{\Hn}(z,R)$ denotes the ball of radius $R$ centered at $z$. 
As $\Gamma$ is non-elementary, we must have \details{$\sharp \Lambda_{\Gamma}=+\infty $}, $\delta_{\Gamma}>0$. \details{The number $\delta_{\Gamma}$ also corresponds to the exponent of convergence of the Poincaré series of $\Gamma$, that is the infimum of the positive numbers $s\in \R^*_{+}$ such that for some (hence any) $x,y\in \Hn$, one has
$$\mathcal{P}_{\Gamma}(s, x,y):=\sum_{\gamma \in \Gamma}e^{-s \,d_{\Hn}(x, \gamma.y)}<\infty$$
The value of $\mathcal{P}_{\Gamma}(s,x,y)$ at $s=\delta_{\Gamma}$ may be finite or infinite (this does not depend on the choice of $x,y$), and $\Gamma$ is said to be of \emph{divergence type} if it is infinite.  
}

We  also define the \emph{Busemann function}  by 
 $$\beta_{x}(z_{1},z_{2})=\lim_{z'\in \Hn, z'\to x}d(z',z_{1})-d(z',z_{2})$$
 for $x\in \partial \Hn$, $z_{1},z_{2}\in \Hn$.

\begin{def*}[Patterson-Sullivan measures \cite{Patterson76, Sullivan79}] Let $\Gamma \leq Isom^+(\Hn)$ be a non-elementary  discrete subgroup of orientation preserving isometries.
 There exists a  family of  finite Borel  measures $(\nu_{z})_{z\in \Hn}$ on $\Lambda_{\Gamma}$ such that for all $z,z'\in \Hn$, $\gamma\in \Gamma$, 
$$\gamma_{\star} \nu_{z}=\nu_{\gamma.z} \,\,\,\,\,\,\text{  and   }\,\,\,\,\,\, d\nu_{z}(\xi)=e^{-\delta_{\Gamma} \beta_{\xi}(z,z') }d\nu_{z'}(\xi)$$
\details{Moreover, if $\Gamma$ is of divergence type, then this family is unique up to scalar multiplication.}
In this case, we call the measures $(\nu_{z})_{z\in \Hn}$ the \emph{Patterson-Sullivan measures} associated to $\Gamma$. 
\end{def*}

\noindent{\bf Remark}. For $\Gamma$ of divergent type, the PS measures of $\Gamma$ are all equivalent to one another. Hence, they define without ambiguity a measure class.

\bigskip
\details{Let $\text{hull}(\Lambda_{\Gamma})$ be the convex hull of $\Lambda_{\Gamma}$, in other terms $\text{hull}(\Lambda_{\Gamma})$  is\footnote{\details{This characterization is indeed equivalent due Caratheodory's theorem applied to the projective model of $\Hn$, in which geodesics are straight lines.}} the set of points in $\overline{\Hn}$ belonging to an ideal polyhedral $n$-simplex with all vertices in $\Lambda_{\Gamma}$. In particular, $\text{hull}(\Lambda_{\Gamma})$ is compact because $\Lambda_{\Gamma}$ is.} We say that $\Gamma$  is \emph{convex cocompact} if the action of $\Gamma$ on $C(\Gamma)=\text{hull}(\Lambda_{\Gamma})\cap \Hn$  is cocompact. It is equivalent to ask that $\Gamma$ is finitely generated and the orbit maps $\theta_{z} :\Gamma\rightarrow \Hn, \gamma \mapsto \gamma.z$ for $z\in \Hn$ are quasi-isometric embeddings \details{(for some/any word metric on $\Gamma$), see for instance \cite[Section 1.8]{Bourdon95}. } 

Convex cocompact subgroups \details{are of divergence type}, and one can realise their PS measures on the boundary $\partial \Hn$  as Furstenberg measures for a suitable probability measure on $\Gamma$:

\begin{fact}[\cite{LiNaudPan21}] \label{fact-FL}
Let $\Gamma \leq Isom^+(\Hn)$ be a \emph{convex cocompact} non-elementary  discrete subgroup of orientation preserving isometries.
For any $z\in \Hn$, there exists a probability measure $\mu_{z}$ on $\Gamma$ whose support generates $\Gamma$ as a group, which has  finite exponential moment,  and  
such that $\mu_{z}*\nu_{z}=\nu_{z}$. 
\end{fact}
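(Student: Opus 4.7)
My plan is to realize $\mu_z$ as the law of the first return word of a finite-state Markov chain that codes the boundary dynamics of $\Gamma$ on $\Lambda_\Gamma$. The three building blocks are: (i) a symbolic coding of $\Lambda_\Gamma$, (ii) identification of $\nu_z$ (up to class) with the boundary projection of a Gibbs measure on the shift, and (iii) a return-time construction turning the shift into an i.i.d.\ walk on $\Gamma$.

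Step 1. Convex cocompactness of $\Gamma$ gives a Cannon / Bowen-Series automatic structure: a subshift of finite type $(\Sigma, T)$ on a finite alphabet $\mathcal{A} \subset \Gamma$ (built from a symmetric generating set), together with a Lipschitz factor map $\pi : \Sigma \to \Lambda_\Gamma$ sending $(a_1, a_2, \dots)$ to $\lim_n a_1 \cdots a_n \cdot z$. The map $\pi$ is essentially injective, and $\Gamma$-equivariance of the construction matches the shift dynamics with the boundary action.

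Step 2. On $\Sigma$ take the H\"older potential $\phi(a) = -\delta\, \beta_{\pi(Ta)}(a_1 \cdot z, z)$ built from the Busemann cocycle, and let $m_\phi$ be its unique equilibrium state, produced by Ruelle-Perron-Frobenius on the subshift. Sullivan's identification of the Patterson-Sullivan class with the boundary projection of the Bowen-Margulis-Sullivan measure implies that $\pi_* m_\phi$ lies in the class of $\nu_z$. A standard renormalization of transition weights within the Gibbs cohomology class produces equality $\pi_* m_\phi = \nu_z$.

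Step 3. Realize $m_\phi$ as the stationary measure of an irreducible aperiodic Markov chain $(X_n)_{n \geq 1}$ on $\mathcal{A}$ with H\"older transition kernel. Fix a reference state $a_\star$ and let $\tau$ be the first return of $X$ to $a_\star$. Define $\mu_z$ on $\Gamma$ as the law of the group element $X_1 X_2 \cdots X_\tau$. Because the chain is finite-state and mixing, $\tau$ has exponential tails; since each $X_n$ has bounded word length, $\mu_z$ has finite exponential moment in any word metric of $\Gamma$. The induced i.i.d.\ walk $(Y_k)$ with $Y_k = X_1 \cdots X_{\tau_k}$ traces a subsequence of the original chain trajectory, so its Gromov limit on $\partial \Hn$ has law $\pi_* m_\phi = \nu_z$, proving stationarity. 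Generation of $\Gamma$ by $\mathrm{supp}\,\mu_z$ follows by adjoining, if necessary, finitely many excursion types from $a_\star$: irreducibility of the chain together with the non-elementary assumption ensures that the group closure of the returns is all of $\Gamma$.

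The main obstacle lies in Step 2: one must verify that the Busemann cocycle pulls back to a genuinely H\"older potential on $\Sigma$ (so that Ruelle-Perron-Frobenius and the spectral gap apply on a suitable function space), and that shadows of cylinders in $\Sigma$ have PS mass comparable to their Gibbs mass -- this is the step that tethers the thermodynamic formalism to the conformal density on $\Lambda_\Gamma$. Once these ingredients are in place, the return-time construction in Step 3 is routine and the exponential moment control is automatic.
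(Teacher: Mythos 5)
The paper does not prove this statement; it is cited as a Fact from \cite{LiNaudPan21}, whose mechanism is quite different from yours. That construction is a direct discretization in the spirit of Connell--Muchnik and Furstenberg--Lyons--Sullivan: one solves, with exponentially decaying coefficients $c_\gamma\geq 0$, the functional identity $\sum_{\gamma\in\Gamma} c_\gamma\, e^{-\delta\,\beta_\xi(\gamma z, z)}=1$ for all $\xi\in\Lambda_\Gamma$, and then sets $\mu_z(\gamma)=c_\gamma$; the conformality relation $d\nu_{\gamma z}/d\nu_z(\xi)=e^{-\delta\beta_\xi(\gamma z,z)}$ immediately gives $\mu_z*\nu_z=\nu_z$ \emph{as an equality of measures}, and the exponential decay of the $c_\gamma$ gives the exponential moment. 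This is exactly the mechanism \Cref{reconstruction} of the present paper invokes from the appendix of \cite{LiNaudPan21}. Your route through a Bowen--Series coding, a Gibbs state, and a return-time induction is a genuinely different strategy, closer in spirit to Lalley's work on Schottky groups, and it is worth comparing: the discretization proof gets exact stationarity essentially for free, whereas the symbolic approach would at best produce a walk whose harmonic measure lies in the Patterson--Sullivan \emph{class}.

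There is, however, a concrete gap in Step 3 that breaks the argument as written. Starting the chain at a fixed state $a_\star$ and inducing on returns to $a_\star$ gives an i.i.d.\ walk, but its boundary limit is $\pi(X_1,X_2,\dots)$ with $X_1$ drawn from the transitions out of $a_\star$, so the harmonic measure of this walk is supported on the finite union of shadows $\bigcup\{\pi([a]):a$ reachable from $a_\star$ in one step$\}$. This is a proper closed subset of $\Lambda_\Gamma$ (the shadows of the various symbols partition $\Lambda_\Gamma$ up to overlap, and there are at least two of them since $\Gamma$ is non-elementary), whereas $\nu_z$ has full support $\Lambda_\Gamma$. Hence the resulting measure cannot equal $\nu_z$, and no a posteriori addition of ``excursion types'' can repair this without destroying the i.i.d.\ structure and hence stationarity. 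Step 2 also overclaims: cohomologous changes of potential leave the equilibrium state $m_\phi$ unchanged, so ``renormalization within the Gibbs cohomology class'' cannot upgrade the comparability $\pi_*m_\phi \asymp \nu_z$ (the shadow lemma) into the equality $\pi_*m_\phi=\nu_z$. By contrast, the obstacle you flag as the main one --- H\"older regularity of the Busemann potential on the coding and the shadow lemma --- is standard for convex cocompact groups and is not where the difficulty lies.
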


\subsection{Winding on convex cocompact manifolds}

\label{Sec-laws-hyperbolic}

We now prove \Cref{laws-hyperbolic}. We assume $\Gamma$ non-elementary, torsion free,   discrete, convex cocompact, and we  fix $m$ a probability measure on the boundary that is absolutely continuous with respect to the class of Patterson-Sullivan. We aim to estimate the statisics of $m$-typical geodesic rays on $\Hn$ projected via the map $i : \Hn\rightarrow H_{1}(M, \R)$ where    $M=\Gamma\backslash \Hn$. We can assume without loss of generality that the basepoint $o$ is in $C(\Gamma)$.  As $C(\Gamma)$ has compact projection, the norms of the $1$-forms defining $i$ are bounded in $C(\Gamma)$, hence \emph{$i$ is Lipschitz} on $C(\Gamma)$. We are thus  in a position to apply \Cref{laws-projected} \details{with $\Omega=C(\Gamma)$}  if $m$ is a Furstenberg measure for some probability measure $\mu$ with moment conditions. Note this is not always the case, for instance if $m$ lacks regularity. Our strategy is to start with the case where $m=\nu_{o}$ is the PS measure based at $o$, then deduce the general result using the $\Gamma$-equivariance of the $\{\nu_{z}, z\in \Hn\}$ to approximate any $m$ by a combination of $\nu_{\gamma.o}$.  

\bigskip

We start by proving that $\nu_{o}$-typical geodesic rays have zero drift in $H_{1}(M, \R)$. Our argument relies on the dynamics of the geodesic flow for the Bowen-Margulis-Sullivan measure (denoted by $m_{BMS}$) that we now recall.

The BMS-measure can be defined as the unique  probability measure on $T^1M$ which is invariant by the geodesic flow and has maximal entropy \cite{OtalPeigne04}.  It is supported on the compact  subset $T^1M_{|K(\Gamma)}$ where $K(\Gamma)=\Gamma\backslash C(\Gamma)$ is the \emph{convex core}. 

The BMS-measure also has an explicit construction in terms of Patterson-Sullivan measures. Indeed, recall Hopf coordinates allow to identify $T^1\Hn$ with $ (\partial\Hn)^{(2)} \times \R$ where $^{(2)}$ refers to the direct product minus the diagonal. The identification is given by the map 
$$ v \mapsto (v^-=\lim_{-\infty} \phi_{t}v, \,v^+=\lim_{+\infty} \phi_{t}v,\, B_{v^+}(o, v_{*}))$$
where $\phi_{t}$ is the time $t$ geodesic flow, $v_{*}\in \Hn$ stands for the basepoint of $v$. We define a measure $\widetilde{m}_{BMS}$ on $T^1\Hn$ by setting in Hopf coordinates   
$$d\widetilde{m}_{BMS}(x, y, s )= e^{\delta_{\Gamma} B_{\xi}(o, v_{*})}e^{\delta_{\Gamma} B_{\eta}(o, v_{*})} d\nu_{o}(x)d\nu_{o}(y) ds $$
where $v$ is here the vector represented by $(x, y, s )$. The measure $\widetilde{m}_{BMS}$ is $\phi_{t}$-invariant  (use that $\phi_{t}$ acts by translation on the $\R$-coordinate) and $\Gamma$-invariant.  Hence, it  defines a  $\phi_{t}$-invariant  measure on $T^1M$, which is  known to be finite and $\phi_{t}$-ergodic as $\Gamma$  is geometrically finite \cite{Sullivan84}. Its normalization of mass $1$ is $m_{BMS}$.

\bigskip

We now prove the centered law of large numbers in \Cref{laws-hyperbolic}. 

\begin{lemma}[zero drift] \label{zero-drift}
We have $$e_{\nu_{o}}=0$$  
\end{lemma}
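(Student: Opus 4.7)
The plan is to apply Birkhoff's ergodic theorem to $m_{BMS}$ on $T^1 M$ against the observables coming from the defining $\Gamma$-invariant $1$-forms, exploit the flip symmetry $v \mapsto -v$ to get mean zero, and transfer the conclusion to $\nu_o$-typical rays from $o$. Fix one such $1$-form $\omega$ on $\Hn$, descend it to $M$, and view it as the continuous bounded function $v \mapsto \omega_{\pi v}(v)$ on the compact support $T^1 M_{|K(\Gamma)}$ of $m_{BMS}$; I argue componentwise in $H_1(M,\R)$.

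First I would verify $\int_{T^1 M} \omega\, dm_{BMS} = 0$. In Hopf coordinates, the flip $v \mapsto -v$ acts by $(x,y,s) \mapsto (y, x, c(x,y)-s)$, where the shift $c(x,y)$ arises because $B_x(o,\cdot) + B_y(o,\cdot)$ is constant along $\gamma_{x,y}$. This transformation preserves the density $e^{\delta B_x(o,v_*)}e^{\delta B_y(o,v_*)}\,d\nu_o(x)\,d\nu_o(y)\,ds$ of $\widetilde m_{BMS}$ (which is in fact independent of $s$ along each slice and symmetric in $x,y$), so it preserves $m_{BMS}$. The function $\omega$ is odd under the flip by linearity of the $1$-form, whence the integral vanishes.

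Next, by ergodicity of the geodesic flow on $(T^1 M, m_{BMS})$, Birkhoff gives an $\widetilde m_{BMS}$-conull set $\tilde G \subset T^1 \Hn$ on which $\tfrac{1}{T}\int_0^T \omega(\phi_t v)\,dt \to 0$. On the universal cover this integral equals $i(\gamma_v(T))-i(\gamma_v(0))$, with $\gamma_v$ the oriented geodesic line through $\pi_{\Hn}(v)$. Since the density of $\widetilde m_{BMS}$ is everywhere positive, $\widetilde m_{BMS}$ is mutually absolutely continuous with $d\nu_o \otimes d\nu_o \otimes ds$, and Fubini produces a $\nu_o \otimes \nu_o$-conull set of pairs $(x,y) \in \Lambda_\Gamma^2$ along which $i(\gamma_{x,y}(T))/T \to 0$.

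Finally, I would transfer this to rays from $o$. As $o$ is in the convex core and $\gamma_{x,y}$ (with $x, y \in \Lambda_\Gamma$) stays in the preimage of the compact convex core, the orthogonal foot of $o$ on $\gamma_{x,y}$ is at distance $O(1)$, so the ray $r_y$ shadows the forward tail of $\gamma_{x,y}$ at bounded distance. The $1$-forms being $\Gamma$-invariant and smooth on the cocompact convex hull make $i$ Lipschitz there, yielding $i(r_y(T)) = i(\gamma_{x,y}(T)) + O(1)$. Thus $i(r_y(T))/T \to 0$ for $\nu_o$-a.e. $y$, and the LLN from \Cref{laws-projected} identifies this limit with the drift $e_{\nu_o}$. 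The main subtle point is the measure-theoretic transfer from $\widetilde m_{BMS}$-a.s. statements about bi-infinite geodesics to $\nu_o$-a.s. statements about rays from $o$; convex cocompactness is used twice, both to guarantee ergodicity and compact support of $m_{BMS}$ and to compare rays with bi-infinite geodesics via bounded shadowing.
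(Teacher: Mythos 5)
Your proposal is correct and follows essentially the same route as the paper: both apply Birkhoff's ergodic theorem to $m_{BMS}$, kill the mean via the orientation-reversing flip symmetry, and then transfer the $\widetilde m_{BMS}$-almost-sure statement on bi-infinite geodesics to $\nu_o$-typical rays from $o$. The only cosmetic difference is that you integrate the $1$-form as a continuous-time observable on $T^1M$ while the paper packages the same computation as a unit-time cocycle $\sigma(v,1)$ and uses $\sigma(v,1)+\sigma(\eps(\phi_1 v),1)=0$; and your final shadowing constants $O(1)$ should be read as depending on the fixed pair $(x,y)$ rather than uniformly, which suffices for the almost-everywhere conclusion.
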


\begin{proof}
 Extend $i$ to $T^1\Hn$ by  precomposing it with the natural projection $T^1\Hn\rightarrow \Hn$. 
 Introduce the \emph{cocycle}
$$\sigma : T^1M\times \R \rightarrow H_{1}(M, \R), \,\,(v, t)\mapsto i(\phi_{t}\widetilde{v})-i(\widetilde{v})$$
where $\widetilde{v}\in T^1\Hn$ lifts $v$.  
Note that the $\Gamma$-equivariance of $i$ guarantees  that $\sigma$ is well defined. One can interpret $\sigma(v,t)$ has the winding of the path $[v, \phi_{t}v]$ on $M$. Also, $\sigma$ is a cocycle in the sense that 
$$\sigma(v, s+t)=\sigma(v, s)+\sigma(\phi_{s}v, t) $$
Moreover, $\sigma(v, t)$ is uniformly bounded on the compact set $T^1M_{|K(\Gamma)}\times [-1, 1]$. 

The ergodicity of $(m_{BMS}, \phi_{t})$  combined with Birkhoff Ergodic Theorem yields that for $m_{BMS}$-almost-every $v_{0}\in T^1M$, 
$$\frac{1}{n} \left(i(\phi_{n}v_{0}) -i(v_{0})\right) =\frac{1}{n}\sum_{k=0}^{n-1} \sigma(\phi_{k}v_{0}, a_{1}) \longrightarrow \int_{T^1M}\sigma(v, 1) dm_{BMS}(v)$$

The integral on the right must be zero. Indeed, the measure $m_{BMS}$ is invariant under $\phi_{1}$ and the involution map $T^1M\rightarrow T^1M, v\mapsto \eps(v)$ reversing  orientation, so 
\begin{align*}
\int_{T^1M}\sigma(v, 1) dm_{BMS}(v)= \frac{1}{2}\int_{T^1M}\underbrace{\sigma(v, 1)+\sigma(\eps(\phi_{1}v), 1)}_{=0} dm_{BMS}(v)=0
\end{align*} 
\details{where the equality $\sigma(v, 1)+\sigma(\eps(\phi_{1}v), 1)=0$ comes from the computation $\sigma(\eps(\phi_{1}v), 1)=i(\phi_{1}\eps(\phi_{1}\widetilde{v}))-i(\eps(\phi_{1}\widetilde{v}))=i(\widetilde{v})-i(\phi_{1}\widetilde{v})=-\sigma(v, 1)$.}

It follows that for $\widetilde{m}_{BMS}$-almost every vector $\widetilde{v}\in T^1\Hn$, the path $[o, \phi_{t}\widetilde{v}]$ has sublinear drift in $H_{1}(M, \R)$.  As the projection of $\widetilde{m}_{BMS}$ to the boundary $\partial \Hn$ via $\widetilde{v}\mapsto \lim_{t\to+\infty}\phi_{t}\widetilde{v}$ is equivalent to $\nu_{o}$, we get nullity of the drift for $\nu_{o}$-typical directions, as required.

\end{proof}

We also claim that the Gaussian central limit theorem for $\nu_{o}$-typical rays is non-degenerate.

\begin{lemma}[definite covariance matrix]
We have $$\det(A_{\nu_{o}})\neq 0$$ 
\end{lemma}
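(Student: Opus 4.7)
The plan is to derive non-degeneracy of $A_{\nu_o}$ from the realization of $\nu_o$ as a Furstenberg measure, then apply the criterion \Cref{ND-criterion}. By \Cref{fact-FL}, there exists a probability measure $\mu_o$ on $\Gamma$ with finite exponential moment, whose support generates $\Gamma$ as a group, and satisfying $\mu_o * \nu_o = \nu_o$. Since $\Gamma$ is non-elementary, so is $\mu_o$. By convex cocompactness, the forms $\omega_i$ representing $H^1(M, \R)$ may be chosen bounded on $M$ (for instance supported in a neighborhood of the compact convex core), so that $i$ is Lipschitz on $\Hn$ and \Cref{laws-projected} produces the CLT with limit covariance $A_{\nu_o}$. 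Assume for contradiction that $\det A_{\nu_o}=0$. By \Cref{ND-criterion} one of two alternatives must hold: either $\pi(\supp \mu_o)$ fails to $\R$-span $H_1(M, \R)$, or there exists a nonzero linear form $\varphi \in H_1(M, \R)^*$ such that $\varphi \circ \pi(g)=l(g)$ for every element $g$ in the semigroup $\Gamma_{\mu_o}^+$ generated by $\supp \mu_o$.

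The first alternative is immediate to rule out: since $\Gamma$ is torsion-free, the morphism $\pi$ coincides with the abelianization $\Gamma \to H_1(M, \Z)$; as $\supp \mu_o$ generates $\Gamma$ as a group, $\pi(\supp \mu_o)$ generates $H_1(M, \Z)$ as a group and therefore $\R$-spans $H_1(M, \R)$.

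For the second alternative, I observe that the identity $\varphi \circ \pi = l$ on $\Gamma_{\mu_o}^+$ forces the stable length to be additive on products: for all $g_1, g_2 \in \Gamma_{\mu_o}^+$,
\begin{align*}
l(g_1 g_2) = \varphi(\pi(g_1 g_2)) = \varphi(\pi(g_1)) + \varphi(\pi(g_2)) = l(g_1) + l(g_2).
\end{align*}
The non-elementarity of $\mu_o$ then provides two hyperbolic elements $g_1, g_2 \in \Gamma_{\mu_o}^+$ with disjoint fixed point sets on $\partial \Hn$, and together with the strict subadditivity $l(g_1 g_2) < l(g_1) + l(g_2)$ of the stable length for any two hyperbolic isometries of $\Hn$ with distinct axes, this yields the desired contradiction. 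The main delicate point is precisely this strict subadditivity. It is a classical fact of negatively curved geometry, most transparently seen in the $SL_2$ model where it corresponds to the strict submultiplicativity $\rho(g_1 g_2) < \rho(g_1)\rho(g_2)$ of spectral radii when the two matrices do not share eigendirections; alternatively, one can apply the hypothesis to all pairs $(g_1^n, g_2^m)$ and contradict the standard asymptotics of translation lengths along the Schottky free subgroup $\langle g_1, g_2 \rangle$ supplied by a ping-pong argument.
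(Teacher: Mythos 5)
Your proof takes a genuinely different route from the paper's, and unfortunately it contains a real error in the step you yourself flag as ``the main delicate point.'' The paper's argument is much shorter precisely because it exploits the preceding lemma $e_{\nu_o}=0$: once the drift vanishes, the general formula for $A_\nu$ collapses to $\lambda_{\mu_o}^{-1}\mathrm{Cov}(\mu_{o,ab})$, and degeneracy of that matrix would force $\supp\mu_{o,ab}$ into a linear (not merely affine) hyperplane, which is impossible since $\supp\mu_o$ generates $\Gamma$ and $\pi(\Gamma)$ spans $H_1(M,\R)$. Your route via \Cref{ND-criterion} ignores the zero-drift fact and so has to dispose of the second alternative by hand.

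That is where the gap lies. The claim that the stable length is \emph{strictly subadditive} on hyperbolic isometries of $\Hn$ with distinct axes is false, and the parallel claim of strict submultiplicativity of spectral radius in $SL_2$ is also false. A concrete counterexample: take
$g_1=\begin{pmatrix}\lambda & N\\ 0 & 1/\lambda\end{pmatrix}$, $g_2=\begin{pmatrix}\lambda & 0\\ N & 1/\lambda\end{pmatrix}$ with $\lambda>1$ and $N$ large. Both are hyperbolic with disjoint fixed-point pairs, $l(g_1)=l(g_2)=2\log\lambda$ is fixed, but $\mathrm{tr}(g_1g_2)=\lambda^2+N^2+\lambda^{-2}\to\infty$, so $l(g_1g_2)\to\infty > l(g_1)+l(g_2)$. (The even more elementary counterexample with two opposite unipotents shows $\rho$ is not submultiplicative; the sign of $l(g_1g_2)-l(g_1)-l(g_2)$ genuinely depends on the configuration of the axes.) Your fallback via asymptotics of $l(g_1^ng_2^m)$ along a ping-pong pair is not actually worked out, and it cannot be a purely coarse-geometric statement: in a free group equipped with a word metric, $l(u^nv^m)=n\,l(u)+m\,l(v)$ holds exactly, so any valid version of the argument must use the strict convexity of $\Hn$ in an essential way. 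As written, the proof of the second alternative's impossibility does not go through; ruling out alternative (a) is fine, and the reduction to a Lipschitz $i$ via \Cref{fact-FL} is also fine, but the core geometric claim is incorrect. The intended short proof is simply to apply the zero-drift lemma and observe $A_{\nu_o}\propto\mathrm{Cov}(\mu_{o,ab})$.
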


\begin{proof}
\details{As $e_{\nu_{o}}=0$,   the fomula \eqref{formule-covariance} for the covariance matrix (established in \Cref{Sec-proof-CLT}) yields $A_{\nu_{o}}=Cov(\mu_{o,ab})$.} If $A_{\nu_{o}}$ were degenerate, then $\mu_{o,ab}$ would be supported on some affine hyperplane of $H_{1}(M, \R)$. As $\E(\mu_{o,ab})=\lambda_{\mu_{o}}e_{\nu_{o}}=0$, this hyperplane contains $o$. We now get a contradiction, since the support of $\mu_{o}$ generates $\Gamma$ as a group and $\pi(\Gamma)$ spans $H_{1}(M, \R)$. 

\end{proof}

\details{The discussion at the begining of the section, combined with the two previous lemmas, justifies \Cref{laws-hyperbolic} in the case where $m=\nu_{o}$. We now extend the family of measures that are allowed to obtain the full statement of \Cref{laws-hyperbolic}. We let  $m$ be a probability measure on $\partial \Hn$ which is \emph{absolutely continuous} with respect to the class of Patterson-Sullivan. The LLN and LIL stated in \Cref{laws-hyperbolic} for $m$-typical geodesic rays follow immediately from the case $m=\nu_{o}$, as they only refer to almost-sure behavior. }

Let us check the central limit theorem and the gambler's ruin for geodesic rays chosen via $m$. We use the following decomposition lemma. 

\begin{lemma} \label{reconstruction}
Let $\varphi : \partial \Hn\rightarrow \R_{>0}$ be a smooth positive function (for $\partial \Hn\simeq \mathbb{S}^{N-1}$). Then there exist  coefficients $(c_{\gamma})_{\gamma\in \Gamma}$  such that $c_{\gamma}\in \R^+$ and for all $\xi\in \Lambda_{\Gamma}$,  $$\sum_{\gamma\in \Gamma}c_{\gamma}e^{-\delta_{\Gamma} B_{\xi}(\gamma o, o)}=\varphi(\xi)$$ 
\end{lemma}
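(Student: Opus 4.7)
My approach is an iterative shadow-based reconstruction. Write $E_\gamma(\xi) := e^{-\delta B_\xi(\gamma o, o)}$; the goal is to find non-negative $c_\gamma$ with $\sum_\gamma c_\gamma E_\gamma = \varphi$ on $\Lambda_\Gamma$. Sullivan's shadow lemma says that for $R$ large, $E_\gamma \asymp e^{\delta d(o, \gamma o)}$ on the shadow $S_R(\gamma o) \cap \Lambda_\Gamma$ seen from $o$, with decay governed by the Gromov product outside it. The strategy is to subtract off carefully-weighted finite positive combinations of the $E_\gamma$'s at finer and finer shadow scales, preserving positivity of the residue at each step.

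\textbf{Step 1 (Discretization kernels).} By convex cocompactness, for each $n\in \N$ choose a finite $\mathcal{S}_n \subset \Gamma$ with $d(o,\gamma o) \in [n, n+C_0]$ such that $\{S_R(\gamma o) : \gamma \in \mathcal{S}_n\}$ covers $\Lambda_\Gamma$ with multiplicity $\leq M$, and each shadow has diameter $\asymp e^{-n}$ in the visual metric from $o$. Pick a basepoint $\xi_\gamma \in S_R(\gamma o)\cap \Lambda_\Gamma$ for each such $\gamma$, and for a continuous $\psi : \Lambda_\Gamma \to \R_{>0}$ define
$$A_n\psi(\xi) \,:=\, \sum_{\gamma\in \mathcal{S}_n} \psi(\xi_\gamma)\,e^{-\delta d(o,\gamma o)}\,E_\gamma(\xi).$$
Using the Gromov-product identity $e^{-\delta d(o,\gamma o)}E_\gamma(\xi) = e^{-2\delta d(o,\gamma o)}\,e^{2\delta(\gamma o\,|\,\xi)_o}$ together with the orbital count $\#\{\gamma \in \mathcal{S}_n : (\gamma o\,|\,\xi)_o \in [t, t+1]\} \asymp e^{\delta(n-t)}$, one obtains $\sum_{\gamma\in\mathcal{S}_n}e^{-\delta d(o,\gamma o)}E_\gamma(\xi) \asymp 1$ uniformly on $\Lambda_\Gamma$. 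From this, for $n$ large relative to the modulus of continuity and the infimum of $\psi$, a splitting of the sum along the level sets of $(\gamma o\,|\,\xi)_o$ yields a two-sided pinching $c_1\psi(\xi) \leq A_n\psi(\xi) \leq c_2\psi(\xi)$ on $\Lambda_\Gamma$, with $c_1,c_2$ depending only on $M,R,\delta$.

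\textbf{Step 2 (Iteration and aggregation).} Set $\varphi_0 := \varphi$ and fix $\alpha \in (0, 1/c_2)$. Given $\varphi_k > 0$ continuous on $\Lambda_\Gamma$, pick $n_k$ large enough for the pinching to apply, and define $\varphi_{k+1} := \varphi_k - \alpha A_{n_k}\varphi_k$. The pointwise bounds of Step 1 give $(1-\alpha c_2)\varphi_k \leq \varphi_{k+1} \leq (1-\alpha c_1)\varphi_k$, so the residue stays positive and $\sup_{\Lambda_\Gamma}\varphi_k \to 0$ geometrically. Telescoping and regrouping by $\gamma$ yields
$$\varphi \,=\, \sum_{k\geq 0}\alpha\, A_{n_k}\varphi_k \,=\, \sum_{\gamma\in \Gamma} c_\gamma\,E_\gamma \quad \text{on } \Lambda_\Gamma,$$
with $c_\gamma := \alpha\sum_{k:\gamma\in\mathcal{S}_{n_k}}\varphi_k(\xi_\gamma)\,e^{-\delta d(o,\gamma o)} \geq 0$; uniform convergence follows from the geometric decay of $\sup\varphi_k$.

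\textbf{Main obstacle.} The technical heart is the pointwise upper bound $A_n\psi \leq c_2\psi$, since off-shadow contributions to $A_n\psi(\xi)$ involve $\psi$-values far from $\xi$ and must be damped by $\psi(\xi)$ rather than by $\|\psi\|_\infty$. The orbital counting of Step 1 shows these contributions decay like $e^{-\delta n}\|\psi\|_\infty$, which forces $n_k$ to be chosen in terms of the ratio $\|\varphi_k\|_\infty/\inf\varphi_k$. This ratio may drift along the iteration (because $(1-\alpha c_1)/(1-\alpha c_2) \geq 1$), so one has to let $n_k$ grow linearly in $k$ and simultaneously track the modulus of continuity of $\varphi_k$, which deteriorates as $A_{n_k}\varphi_k$ injects oscillations at scale $e^{-n_k}$; both difficulties are handled by letting $n_k$ grow fast enough between steps.
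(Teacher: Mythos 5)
Your proposal is a correct reconstruction, in the same iterative-shadow spirit, of the argument the paper outsources to the appendix of Li--Naud--Pan (which is itself a refinement of Connell--Muchnik). The paper's own ``proof'' is a one-line citation, so it is worth comparing carefully: you have in effect re-derived the cited result, and for a general smooth $\varphi$ rather than $\varphi\equiv 1$, which is exactly the adaptation the author asks the reader to supply. Your key lemma --- the two-sided pinching $c_1\psi\le A_n\psi\le c_2\psi$ with constants independent of $\psi$ provided $n$ is large enough relative to $\|\psi\|_\infty/\inf\psi$ and to the modulus of continuity of $\psi$ --- is the right engine, and the contraction/telescoping scheme with a fixed $\alpha<1/c_2$ then produces nonnegative $c_\gamma$ with uniform convergence; the final regrouping is legitimate by Tonelli since all terms are nonnegative and the partial sums are dominated by $\varphi$.

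A few points deserve more care before this would compile into a complete proof. First, the two-sided estimate $\#\{\gamma\in\mathcal S_n : (\gamma o\mid\xi)_o\in[t,t+1]\}\asymp e^{\delta(n-t)}$ is overstated: the upper bound is a clean consequence of Sullivan's shadow lemma together with the Ahlfors regularity of the limit set in the convex cocompact case, but the lower bound at intermediate scales $t$ is not needed and would require more work. For $c_1$ you only need a single $\gamma\in\mathcal S_n$ with $\gamma o$ within bounded distance of the ray $[o,\xi)$, which is immediate from cocompactness of the action on the convex hull; stating it this way avoids the spurious lower-bound claim. Second, the inductive choice of $n_k$ must be made explicit: $A_{n_k}\varphi_k$ is smooth (a finite sum of smooth Busemann kernels) so $\varphi_{k+1}$ remains smooth and $\inf\varphi_{k+1}\ge(1-\alpha c_2)\inf\varphi_k>0$, but its modulus of continuity and the ratio $\|\varphi_{k+1}\|_\infty/\inf\varphi_{k+1}$ can deteriorate, so one should record that $n_{k+1}$ is chosen after inspecting $\varphi_{k+1}$, strictly larger than $n_k+C_0$; this last requirement also ensures that each $\gamma$ lies in only finitely many $\mathcal S_{n_k}$, so $c_\gamma$ is a finite sum. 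With these clarifications your argument is sound and faithfully reproduces the cited construction.
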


\begin{proof}This follows from the appendix in \cite{LiNaudPan21}: just replace $R_{0}=1$ by $R_{0}=\varphi$ in \cite[Section A.3]{LiNaudPan21}.
\end{proof}

\begin{lemma}
$m$-typical geodesic rays satisfy a non-degenerate CLT, with same covariance matrix as $\nu_{o}$, as well the Gamler's ruin estimate.
\end{lemma}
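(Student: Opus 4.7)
The plan is to reduce everything to the $\nu_o$-case, which is covered by \Cref{laws-projected} together with \Cref{zero-drift} and the preceding non-degeneracy lemma. The reduction has two ingredients: a decomposition of any absolutely continuous $m$ into a convex combination of the translates $\gamma_{\star}\nu_o = \nu_{\gamma o}$ via \Cref{reconstruction}, and a fellow-traveling argument that transfers the CLT and GR from $\nu_o$-typical rays to $\gamma_{\star}\nu_o$-typical rays for each fixed $\gamma$.

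First I would treat the case where $dm = \varphi\,d\nu_o$ with $\varphi : \partial \Hn \to \R_{>0}$ smooth. \Cref{reconstruction} then produces $c_\gamma \geq 0$ with $\varphi(\xi) = \sum_\gamma c_\gamma e^{-\delta B_\xi(\gamma o,o)}$ on $\Lambda_\Gamma$, and the conformality relation for the Patterson-Sullivan family rewrites this as the measure identity $m = \sum_\gamma c_\gamma\, \gamma_{\star}\nu_o$, necessarily with $\sum_\gamma c_\gamma = 1$. For each fixed $\gamma$, I would compare $\gamma_{\star}\nu_o$-typical rays with $\nu_o$-typical rays: if $\eta \sim \nu_o$ and $\xi := \gamma\eta \sim \gamma_{\star}\nu_o$, both $r_\xi$ and $\gamma\cdot r_\eta$ are geodesic rays to $\xi \in \Lambda_\Gamma$ that remain in $C(\Gamma)$, so by the exponential convergence of cofinal geodesics in $\Hn$ there exist a constant $C > 0$ and a time offset $s = \beta_\xi(\gamma o, o)$, uniformly bounded for $\eta \in \Lambda_\Gamma$ by compactness of the limit set, such that $d_{\Hn}(r_\xi(t), \gamma \cdot r_\eta(t-s)) \leq C$ for $t$ large. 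Using $\Gamma$-equivariance of $i$ and its Lipschitz regularity on $C(\Gamma)$, one then obtains
$$i \circ r_\xi(t) = \pi(\gamma) + i\circ r_\eta(t - s) + O(1).$$
Since $e_{\nu_o} = 0$ by \Cref{zero-drift}, the bounded shifts in both value and time are negligible at scale $\sqrt{t}$ for the CLT and at the linear scales $k_s \sim ks$, $l_s \sim ls$ for the GR; both limit laws therefore transfer to $\gamma_{\star}\nu_o$ with the same limits $\mathscr{N}(0, A_{\nu_o})$ and $k/(k+l)$.

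The mixture statement for $m$ then follows from $\E_m[\phi(X_t)] = \sum_\gamma c_\gamma\, \E_{\gamma_{\star}\nu_o}[\phi(X_t)]$ combined with dominated convergence (summands dominated by $c_\gamma\|\phi\|_\infty$), applied to bounded continuous test functions for the CLT and to indicators of exit events for the GR. Non-degeneracy of the covariance is inherited from that of $A_{\nu_o}$. To remove the smoothness hypothesis on $\varphi$, I would approximate any $m$ absolutely continuous with respect to the Patterson-Sullivan class in total variation by probabilities $m_\eps$ with smooth positive density ($C^\infty$-densities being $L^1(\nu_o)$-dense among probability densities); both CLT and GR are stable under such perturbations since $|m(A) - m_\eps(A)| \leq \|m - m_\eps\|_{TV}$ uniformly in $A$ and $t$.

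The main obstacle I anticipate is the fellow-traveling step: one must ensure enough uniformity of the $O(1)$ error in $\eta \in \Lambda_\Gamma$ so that it remains negligible over the exit windows of size $O(s^3)$ that control the GR. This uniformity is afforded by the compactness of $C(\Gamma)/\Gamma$ in the convex cocompact setting, together with the uniform exponential convergence of cofinal geodesics in $\Hn$.
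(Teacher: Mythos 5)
Your proposal is correct and follows the same overall architecture as the paper: decompose a smooth positive density into a convex combination of translates $\nu_{\gamma.o}=\gamma_{\star}\nu_{o}$ via \Cref{reconstruction}, transfer the CLT and GR from $\nu_{o}$ to each $\nu_{\gamma.o}$, pass to the mixture by dominated convergence, and remove the smoothness hypothesis by $L^{1}(\nu_{o})$-approximation (which is exactly the total-variation approximation you use). The one place where your route genuinely diverges from the paper is the transfer step $\nu_{o}\rightsquigarrow\nu_{\gamma.o}$. The paper simply observes that each $\nu_{\gamma.o}$ is itself a Furstenberg measure of some $\mu_{\gamma.o}$ with finite exponential moment (\Cref{fact-FL}), so \Cref{laws-projected} applies directly with basepoint $o$, while $e_{\nu_{\gamma.o}}=e_{\nu_{o}}=0$ and $A_{\nu_{\gamma.o}}=A_{\nu_{o}}$ follow from the measure-class invariance noted after \Cref{laws-projected}. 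You instead give a geometric fellow-traveling argument: for $\eta\sim\nu_{o}$ and $\xi=\gamma\eta$, the rays $r_{\xi}$ and $\gamma\cdot r_{\eta}$ converge exponentially, so $i\circ r_{\xi}(t)=\pi(\gamma)+i\circ r_{\eta}(t-s(\eta))+O(1)$ with $s(\eta)$ uniformly bounded, which transfers both limit laws with the same Gaussian and the same $k/(k+l)$. This is correct and more self-contained (you only invoke \Cref{fact-FL} once, for $\nu_{o}$), at the price of having to manage the bounded time offset; the paper's route is shorter but needs the full strength of \Cref{fact-FL} at every basepoint plus the measure-class invariance of $e_{\nu}$ and $A_{\nu}$. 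One small sign slip: with the paper's convention $\beta_{\xi}(z_{1},z_{2})=\lim d(z',z_{1})-d(z',z_{2})$, the offset that makes $r_{\xi}(t)$ and $\gamma\cdot r_{\eta}(t-s)$ match is $s=\beta_{\xi}(o,\gamma o)=-\beta_{\xi}(\gamma o,o)$, not $\beta_{\xi}(\gamma o,o)$; this does not affect the argument since only the uniform bound $|s|\leq d(o,\gamma o)$ is used.
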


\begin{proof}
 We have proven the CLT and the GR for $m \in \{\nu_{\gamma. o}, \gamma\in \Gamma\}$. Observe that they remain true (with same covariance matrix) for any convex combination of these measures, i.e. for measures of the form 
$$\sum_{\gamma\in \Gamma} c_{\gamma} \nu_{\gamma.o}=\left(\sum_{\gamma\in \Gamma}c_{\gamma}e^{-\delta_{\Gamma} B_{\xi}(\gamma o, o)}\right) \nu_{o}$$ 
where $c_{\gamma}\geq 0$, $\sum c_{\gamma}=1$. By \Cref{reconstruction}, the sum in the right hand side can be chosen to be any $\varphi$   positive smooth function on the boundary $\partial \Hn$. Now, if $m=\psi \nu_{o}$ with $\psi\in L^1(\nu_{o})$ non negative, we can approximate $\psi$ in    $L^1(\nu_{o})$ by such functions $\varphi$ to  extend the result to $m$. 

\end{proof}

It remains to check the PLD stated in \Cref{laws-hyperbolic} in case $m$ has $L^{1+\eps}$ Radon-Nikodym derivatives with respect to Patterson-Sullivan measures.

\begin{lemma}
If $m=\psi \nu_{o}$ where $\psi \in L^{1+\eps}(\partial \Hn, \nu_{o})$ for some $\eps>0$, then $m$-typical geodesic rays satisify the principle of large deviations. 
\end{lemma}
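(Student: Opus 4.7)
The strategy is to first establish the PLD for the base Patterson-Sullivan measure $\nu_o$, then transfer it to $m = \psi\,\nu_o$ via Hölder's inequality, using that $L^{1+\varepsilon}$ regularity of the density is enough to preserve exponential decay (with a worse rate).

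First I would apply \Cref{laws-projected} to $\nu_o$ itself. By \Cref{fact-FL}, $\nu_o = \mu_o * \nu_o$ for some $\mu_o$ on $\Gamma$ supported generatingly and with finite exponential moment; since $\Gamma$ is convex cocompact we have already noted that the projection $i$ is Lipschitz (hence quasi-Lipschitz) on $C(\Gamma)$, and the sample path stays near $C(\Gamma)$ for free because of cocompactness. The PLD statement in \Cref{laws-projected} then applies verbatim and gives constants $D,\delta>0$ (depending on $\alpha$) with
\[
\nu_o\bigl\{\xi : \|i\circ r_\xi(t) - t\,e_{\nu_o}\| \geq \alpha t\bigr\} \leq D e^{-\delta t}.
\]
By \Cref{zero-drift} we have $e_{\nu_o}=0$, and since the set of $\xi$ for which $t^{-1} i\circ r_\xi(t)$ fails to converge is $\nu_o$-null, absolute continuity of $m$ with respect to $\nu_o$ implies it is also $m$-null, so the drift $e_m$ coincides with $e_{\nu_o}$.

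Next I would invoke Hölder's inequality with conjugate exponents $1+\varepsilon$ and $(1+\varepsilon)/\varepsilon$. For any Borel set $A\subseteq \partial \mathbb{H}^N$,
\[
m(A) = \int_A \psi \, d\nu_o \;\leq\; \|\psi\|_{L^{1+\varepsilon}(\nu_o)}\; \nu_o(A)^{\varepsilon/(1+\varepsilon)}.
\]
Applying this to $A = A_t := \{\xi : \|i\circ r_\xi(t) - t\,e_m\| \geq \alpha t\}$ and using the PLD for $\nu_o$, we get
\[
m(A_t) \;\leq\; \|\psi\|_{L^{1+\varepsilon}(\nu_o)}\, D^{\varepsilon/(1+\varepsilon)}\, e^{-\delta' t}, \qquad \delta' = \frac{\varepsilon\,\delta}{1+\varepsilon} > 0,
\]
which is exactly the PLD statement for $m$ with degraded rate $\delta'$ but the same prefactor form.

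There is essentially no obstacle here: the argument is a one-line Hölder bootstrap once the PLD for $\nu_o$ is in hand. The only thing worth double-checking is that one may really feed $\nu_o$ itself into \Cref{laws-projected}. This requires only the two inputs already recorded in the paper, namely \Cref{fact-FL} (existence of $\mu_o$ with finite exponential moment generating $\nu_o$) and the Lipschitz regularity of $i$ on $C(\Gamma)$ used in the proof of the CLT part of \Cref{laws-hyperbolic}. Both are available, and the rest is the Hölder inequality, concluding the proof.
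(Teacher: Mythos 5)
Your proof is correct and takes essentially the same approach as the paper: both establish the PLD for $\nu_o$ (via \Cref{laws-projected}, \Cref{fact-FL}, and the zero-drift lemma) and then transfer it to $m=\psi\,\nu_o$ by Hölder's inequality with exponents $1+\eps$ and $(1+\eps)/\eps$, which degrades the exponential rate by the factor $\eps/(1+\eps)$ but preserves exponential decay.
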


\begin{proof}
 Given $\alpha>0$, set $$E_{t}= \{\xi : \|i\circ r_{\xi}(t)\|\geq \alpha t \}  $$
By  H\"older's inequality, 
$$m(E_{t})=\int_{\Lambda_{\Gamma}} {\bf 1}_{E_{t}}(\xi) \psi(\xi) d\nu_{o}(\xi)\leq \nu_{o}(E_{t})^{\eps/(1+\eps)} \|\psi\|_{L^{1+\eps}(\nu_{o})} $$
whence exponential decay by  the PLD for $\nu_{o}$ (consequence of \Cref{laws-projected} and \Cref{zero-drift}). 
\end{proof}
\bigskip

\subsection{Singularity of harmonic measures} \label{Sec-sing}

To conclude the section,  we combine \Cref{laws-projected} and a result of Enriquez-Franchi-Le Jan \cite{EnriquezFranchiLeJan01} to prove \Cref{singularity}:   Furstenberg measures for $L^1$-random walks on a finitely generated discrete subgroup $\Gamma$ of $PSL_{2}(\R)$ containing a parabolic element  are singular with Patterson-Sullivan measures.

\bigskip
Observe that we authorize $\Gamma\backslash \mathbb{H}^2$ to have only one cusp, of trivial homology. In order to apply \Cref{laws-projected}, we need to generate cusps with non-trivial homology. This is the role of

\begin{lemma} \label{singularity-reduction}
To prove \Cref{singularity}, we can assume that $\Gamma$ is torsion free and $\Gamma \smallsetminus [\Gamma, \Gamma]$ contains a parabolic element. 
\end{lemma}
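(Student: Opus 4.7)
The plan is to produce a finite-index subgroup $\Gamma_*\leq \Gamma$ satisfying both conclusions (torsion-freeness and a parabolic outside the commutator) and to transfer $\mu$ to a walk $\mu_*$ on $\Gamma_*$ via the hitting time of $\Gamma_*$, then verify that singularity of the Furstenberg measure $\nu$ with Patterson--Sullivan is equivalent on the two sides. Three ingredients are needed: the existence of $\Gamma_*$, the definition of a good walk $\mu_*$ on it, and the identification of the relevant measure classes.

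\emph{Construction of $\Gamma_*$.} First I would apply Selberg's lemma to obtain a torsion-free normal finite-index subgroup $\Gamma_0\trianglelefteq \Gamma$. Some positive power $p$ of the given parabolic lies in $\Gamma_0$ and is still parabolic. If $p\notin [\Gamma_0,\Gamma_0]$, the choice $\Gamma_*=\Gamma_0$ works. Otherwise $p\in [\Gamma_0,\Gamma_0]$; since $\Gamma_0$ is non-elementary and torsion-free, its abelianization has positive $\Q$-rank, so one can pick a surjection $\phi:\Gamma_0\twoheadrightarrow \Z/n$ with $n\geq 2$ and set $\Gamma_*=\ker \phi$. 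Because $\phi(p)=0$, the parabolic $p$ still lies in $\Gamma_*$, and the cusp of $M_0=\Gamma_0\backslash \mathbb{H}^2$ stabilized by $\langle p\rangle$ lifts to $n\geq 2$ distinct cusps in $M_*=\Gamma_*\backslash \mathbb{H}^2$. The rank formula $\mathrm{rk}\,H_1(M_*)=2g_*+(\text{ends}_*)-1$ for an open oriented hyperbolic surface then forces each individual cusp loop to be non-trivial in $H_1(M_*,\Q)$, so $p\notin [\Gamma_*,\Gamma_*]$.

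\emph{Induced walk.} Next I would define $\tau(b)=\inf\{k\geq 1: b_1\dots b_k\in \Gamma_*\}$ and let $\mu_*$ be the law of $b_1\dots b_{\tau(b)}$ under $\beta=\mu^{\otimes \N^*}$. The projection of the $\mu$-walk to the finite group $\Gamma/\Gamma_*$ is a Markov chain with the identity in its recurrent class, so $\tau$ is finite $\beta$-almost surely with $\E(\tau)<\infty$; Wald's identity together with the obvious quasi-isometry between word metrics of $\Gamma$ and $\Gamma_*$ yields $\sum_{\gamma\in \Gamma_*}d_{\Gamma_*}(e,\gamma)\mu_*(\gamma)<\infty$. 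The subsample $(b_1\dots b_{\tau_k(b)}.o)_{k\geq 0}$ converges in $\partial \mathbb{H}^2$ to the same limit $\xi_b$ as the full $\mu$-sample path, so the Furstenberg measure of $\mu_*$ is precisely $\nu$. In particular $\nu$ has no atoms and $\mu_*$ is non-elementary (an elementary walk on $\partial\mathbb{H}^2$ has a stationary measure supported on at most two points). Finally $\Lambda_{\Gamma_*}=\Lambda_\Gamma$, since finite-index subgroups of non-elementary discrete groups share their limit set, and the Patterson--Sullivan densities of $\Gamma$ and $\Gamma_*$ differ only by the strictly positive conformal factor $e^{\delta \beta}$, so their measure classes coincide. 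Hence $\nu\perp \mathrm{PS}(\Gamma)$ if and only if $\nu\perp \mathrm{PS}(\Gamma_*)$, which completes the reduction.

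\emph{Main obstacle.} The delicate step is the construction of $\Gamma_*$: the worst case occurs when $M_0$ is a finite-area surface with a single cusp, whose loop is then automatically a product of commutators and so every parabolic of $\Gamma_0$ sits inside $[\Gamma_0,\Gamma_0]$. The cyclic cover $\ker\phi$ dissolves this obstruction precisely because $\phi(p)=0$ forces the single cusp to split into $n$ distinct lifts (rather than one lift of multiplicity $n$), which is exactly what makes the lifted cusp loops non-trivial in homology via the rank formula. The remaining ingredients---finite expected return time to a finite-index subgroup, Wald's identity, and invariance of the limit set under passage to finite-index subgroups---are classical and will not pose any real difficulty.
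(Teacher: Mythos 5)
Your proof is correct and follows the same route as the paper: Selberg's lemma to remove torsion, a finite cover to guarantee the parabolic's cusp has nontrivial homology, and the hitting-time measure on the finite-index subgroup (with Wald's identity) to preserve the moment condition and the Furstenberg measure, together with the observation that passage to a finite-index subgroup leaves the limit set and Patterson--Sullivan class unchanged. The differences are cosmetic: where the paper tersely asserts that a $2$-cover makes the number of punctures at least two, you spell out the kernel-of-$\phi$ construction and the orbit count showing the cusp lifts to $n$ distinct cusps (note that this really requires $\phi$ to vanish on the \emph{primitive} generator $q$ of the cusp stabilizer, not merely on the chosen power $p$; this is automatic since $H_1(\Gamma_0)$ is torsion-free, so $q^m=p\in[\Gamma_0,\Gamma_0]$ forces $q\in[\Gamma_0,\Gamma_0]$); and you derive non-elementarity of the induced walk from atom-freeness of the common limit law, whereas the paper cites Choi.
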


\begin{proof} 
The idea is to replace  $\mu$ by its induction on a finite index subgroup of $\Gamma$.

By Selberg's Theorem, $\Gamma$ admits a normal subgroup $\Gamma'$ of finite index which is torsion-free. In particular,  $M'=\Gamma'\backslash \mathbb{H}^2$ is  a hyperbolic surface of finite type with a cusp. Its topology is characterized by its genus $g$ and its number of punctures $p\geq 1$. The first homology group is known to have dimension $2g+p-1$. More precisely, a basis consists in the  $2g$ standard loops of the genus $g$ compactification of $M'$, and any $p-1$ loops which are homotopic to different cusps. Up to replacing $M'$ by a $2$-cover, one may assume  $p\geq 2$. In this case, cusps have non-trivial homology in $M'$. This means that $\Gamma' \smallsetminus [\Gamma', \Gamma']$ contains a parabolic element. 

Let $\tau : B\rightarrow \N\cup\{\infty\}, b \mapsto \inf\{k\geq 1, b_{1}\dots b_{k}\in \Gamma'\}$
be the hitting time of $\Gamma'$ for the right $\mu$-walk on $\Gamma$. As $\Gamma'\backslash \Gamma$ is finite, $\tau$ must have finite first moment. Introduce the measure $\mu_{\tau}$, whose distribution is given by $b_{1}\dots b_{\tau(b)}$. By Wald's formula, $\mu_{\tau}$ has  finite first moment  in any word metric on $\Gamma$, hence on $\Gamma'$.  It is also non-elementary, for instance by \cite[Theorem A]{Choi21-CLT}.  Finally, we know from  \cite[Appendix A.2]{Ben22-TdR} that $\mu_{\tau}$ and $\mu$ share the same Furstenberg measure on the boundary $\partial \mathbb{H}^2$. Summing up, we can replace $\mu$ by $\mu_{\tau}$ to prove   \Cref{singularity}, whence the announced reduction.
\end{proof}

We now conclude the proof of \Cref{singularity}. 

\begin{proof}[Proof of \Cref{singularity}]
As in \cite{GuiLeJ93}, followed by \cite{GadreMaherTiozzo15, RandeckerTiozzo21}, the idea is to show that the Furstenberg measure of $\mu$ and the Patterson-Sullivan measure drive geodesic rays with different winding statistics on $M=\Gamma\backslash \mathbb{H}^2$.

In view of \Cref{singularity-reduction}, we may assume that $\Gamma$ is torsion-free and  let $u\in \Gamma\smallsetminus [\Gamma, \Gamma]$ be a parabolic element.
$u$ corresponds  to a loop $l_{u}$ around a cusp of $M$ which is non trivial in $H_{1}(M, \R)$.  By the De Rham duality theorem,  there exists   a closed $1$-form $\omega$ on $M$ with non zero integral on $l_{u}$. We write $\widetilde{\omega}$ its lift to $\mathbb{H}^2$ and define the $\Gamma$-equivariant map $i:\mathbb{H}^2 \to \R$, 
$$ i(z)=\int_{[o,z]}\widetilde{\omega}$$ 
recording the homological winding around the cusp of $u$. Observe that $i$ is locally bounded (though not quasi-Lipschitz). 

Write $\nu$ the Furstenberg measure of $\mu$ on $\partial \mathbb{H}^2$. By the weak law of large numbers  in \Cref{laws-projected-weak}, we have for any $\alpha>0$, 
$$ \nu(\xi\,:\, | i \circ r_{\xi}(t)-te_{\nu}| \leq \alpha t) \rightarrow 1$$

Let $\delta_{\Gamma}$ the exponent of $\Gamma$ (defined at the beginning of \Cref{Sec-winding-hyperbolic}). Note that $\delta_{\Gamma}\in (1/2, 1]$ since $M$ has a cusp \cite[p. 265]{Sullivan84}. Morever $\delta_{\Gamma}=1$ if and only if $M$ has no funnel. Let $D\subseteq \mathbb{H}^2$ be a fundamental domain for $\Gamma$, and $\nu_{D}$ the geodesic projection\footnote{This means the limit of the push-forwards by the geodesic flow $\phi_{t}$ as $t\to+\infty$} of $\widetilde{m}_{BMS}$ on the boundary $\partial \mathbb{H}^2$.  By \cite{EnriquezFranchiLeJan01}, the winding of the geodesic flow directed by $\nu_{D}$ satisfies a central limit theorem with renormalization in $1/t^{\frac{1}{2\delta_{\Gamma}-1}}$: for any $c\in \R$, 
\begin{align*}
 \nu_{D}(\xi \,:\, i \circ r_{\xi} (t) \geq ct^{\frac{1}{2\delta_{\Gamma}-1}})    \rightarrow \mathfrak{S}([c, +\infty)) 
 \end{align*}
where is $\mathfrak{S}$ some atom-free probability distribution  on $\R$ (more precisely a two-sided stable law of exponent $2\delta_{\Gamma}-1\in (0,1)$) and independent of $D$. Up to averaging on several domains $D$, this is still true for some probability $\nu_{1}$ in the class of Patterson-Sullivan. In particular,  for any $\eps>0$, there is $\alpha>0$ such that 
$$ \limsup_{t\to+\infty} \nu_{1}(\xi \,:\,  |i \circ r_{\xi}(t)-te_{\nu}| \leq \alpha t) \leq \eps$$
 This proves that $\nu$ and $\nu_{1}$  are mutually singular.

\end{proof}

\section{Open questions} \label{Sec-questions}

We mention a few problems that were not discussed here, but might lead to interesting continuations of our study.

\begin{itemize}
\item (Nilpotent case) Let $\Gamma$ be a  non-abelian free group, $N$ a simply connected \emph{nilpotent} Lie group, and $\pi : \Gamma\rightarrow N$ a group morphism. Let $\nu$ be a Furstenberg measure on the Gromov boundary $\partial \Gamma$, determined by a finitely supported non-elementary probability measure $\mu$ on $\Gamma$. What are the statistics of $\nu$-typical geodesic rays projected to $N$? Note that limit theorems on $N$ for i.i.d random walks are available \cite{Raugi78, Breuillard05, DiaconisHough21, Hough19, BenardBreuillardCLT, BenardBreuillardLLT}.

\item (Inverse problem) Let $\mu_{1}, \mu_{2}$ be two finitely supported non-elementary measures on a non-abelian free group $\Gamma$.  Is there a simple necessary and sufficient condition characterizing that $\mu_{1}$ and $\mu_{2}$ have the same Furstenberg measure?

\end{itemize}

\appendix

\section{Appendix  :   General limit laws for  cocycles} \label{Sec-general-laws}

We prove general limit theorems for cocycles. Similar results have been studied  for the Iwasawa cocycle on linear groups in \cite{BQRW, BQ16-CLTlin}  and the Busemann cocycle on hyperbolic groups   \cite{BQ16-CLThyp}, but they do not apply directly to the context of the paper. Our motivation is to propose an abstract setting, encapsulating both  \cite{BQRW, BQ16-CLTlin, BQ16-CLThyp} and our needs, and which will hopefully apply to other situations.

\bigskip

We fix   a compact metric space $X$ and  a measurable group $G$ acting measurably on $X$. 

We let $\sigma : G\times X\rightarrow \R^d$ be a measurable cocycle and $|.|:G\rightarrow \R_{\geq0}$ a subadditive \details{measurable} function such that for all $g\in G$, 
$$\sup_{x\in X} \|\sigma(g,x)\|\leq |g|$$
We also fix  a probability measure $\mu$  on $G$ such that $\int_{G}|g|d\mu(g)<\infty$.

\details{
Throughout Sections A.1-A.4, we assume  that $\sigma$ has constant zero-drift for $\mu$: for all $x\in X$, $$\int_{G}\sigma(g,x) d\mu(g)=0$$
Under suitable moment and regularity conditions, we establish a central limit theorem with stopping time, the law of the iterated logarithm, the large deviation principle and a gambler's ruin estimate for the cocycle $\sigma$. 

In  Section A.5, the zero mean condition on $\sigma$ is replaced  by assuming a uniform lower bound on the drift: for all $x\in X$, 
\begin{align*}
\int_{G}\sigma(g,x)d\mu(g) \geq 1  
\end{align*}
and we study the first time for which $\sigma$ leaves a bounded interval under the $\mu$-random walk, as well as the overshoot.
}

\bigskip
\noindent{\bf Remarks}. 
\details{1) If $\sigma(g,.)$ is continuous for every $g\in G$ (as will be assumed in most of this appendix), then one can always choose the function $|.|$ to be $|g|=\sup_{x\in X}\|\sigma(g,x)\|$. It is indeed measurable because the supremum over $X$ can be replaced by the supremum over a countable dense subset of $X$, and subadditivity follows from the cocycle relation.}

\details{
2)  Although formulated for $\sigma$ with constant zero drift, Sections A.1-A.4  yield limit theorems for cocycles $\sigma$ of the form
$$\sigma(g,x) =\sigma_{\lambda}(g,x) +\psi(gx)-\psi(x)$$
where $\sigma$ has constant drift $\lambda$  with $\lambda\in \R^d$ possibly non-zero, and  $\psi :X\rightarrow \R^d$ is measurable and bounded. Indeed, notice that limit theorems for $\sigma_{\lambda}$ follow from the centered case by setting $G'=G\times \Z$ acting on $X$ via the $G$ component,  $\sigma'((g,k),x)=\sigma_{\lambda}(g,x)-k\lambda$, and $\mu'=\mu\otimes \delta_{1}$. It is also obvious that a fixed bounded function $\psi$  does not play any role for Sections A.1-A.3, where the cocycle is renormalized by a term that goes to $+\infty$.  For A.4, one can argue as in the proof of GR in \Cref{laws-projected} to erase the influence of $\psi$. 
}

\bigskip

 We set $(B, \beta)=(G^{\N^*}, \mu^{\otimes \N^*})$, and  write $\mathcal{B}_{k}$ the $\sigma$-algebra of $B$ generated by the first $k$ coordinates. We say that $\mu$ has finite $p$-th moment if $\int_{G}|g|^pd\mu(g)<\infty$. 

\subsection{Central limit theorem} \label{Sec-general-CLT}

We introduce a notion of continuity on $\sigma$ with respect to the space variable in $X$. More precisely, \details{we say $\sigma$ is \emph{$(\mu,X)$-controlled} } if
\begin{itemize}
 \item for every $g\in G$, the function  $\sigma(g, .)$ is continuous.
  \item $\forall \eps>0$, $\exists R>0$, $\forall n\geq 1$ $\forall x,y\in X$, 
  $$
  \details{\mu^{*n}(g\in G\,:\,|\sigma(g,x) -\sigma(g,y)|\leq R)\geq 1-\eps  }
  $$
\end{itemize}

\bigskip

Under this condition, we prove the central limit theorem for $\sigma$. 

\begin{prop} \label{general-CLT}
Assume $\mu$ has finite second moment, and $\sigma$ has constant zero drift and is $(\mu,X)$-controlled. 
Then, 

\begin{enumerate}
\item\emph{(Covariance)}  Uniformly in $x\in X$, we have the convergence 
$$\frac{1}{n} \int_{G}\sigma(g,x)\, {^t \sigma(g,x)} \,d\mu^{*n}(g)\longrightarrow A $$
where  $A\in M_{d}(\R)$ is some positive semi-definite  symmetric matrix.

\item\emph{(CLT)}  Uniformly in $x  \in X$, as $g$ varies with law $\mu^{*n}$,
$$ \frac{\sigma(g,x)}{\sqrt{n}} \underset{n\to +\infty}{\wconv} \mathscr{N}(0, A) $$ 

\end{enumerate}
\end{prop}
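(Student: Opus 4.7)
The strategy is a martingale CLT applied to a reverse-ordered cocycle sum, combined with the $X$-control to propagate everything uniformly in $x$. Set $M_n(x) := \sigma(b_n b_{n-1}\cdots b_1,x)$; since the $b_i$ are i.i.d.\ under $\beta$, $M_n(x)$ has the same law as $\sigma(g,x)$ for $g\sim\mu^{*n}$, so limit laws for $M_n$ give limit laws for $\sigma(\cdot,x)$ under $\mu^{*n}$. The cocycle identity gives
\[
M_n(x)=M_{n-1}(x)+\sigma(b_n,\,b_{n-1}\cdots b_1 x),
\]
and the constant zero-drift hypothesis ($\int\sigma(g,y)d\mu(g)=0$ for every $y$) makes $M_n(x)$ an $(\mathcal{B}_n)$-martingale with increments bounded by $|b_k|\in L^2(\mu)$.

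For the covariance part~(1), I would derive a fixed-point recursion for $v_n(x):=\int\sigma(g,x){^t\sigma(g,x)}d\mu^{*n}(g)$: expanding $\sigma(gh,x)=\sigma(g,hx)+\sigma(h,x)$ inside $v_{n+1}$ and using zero drift $\int\sigma(g,hx)d\mu^{*n}(g)=0$ to kill the cross terms yields
\[
v_{n+1}(x)=P v_n(x)+\Phi(x),\qquad v_0\equiv 0,
\]
where $\Phi(x):=\int_G \sigma(g,x){^t\sigma(g,x)}d\mu(g)$ and $P\phi(x):=\int_G \phi(gx)d\mu(g)$. Hence $v_n(x)/n=\tfrac{1}{n}\sum_{k=0}^{n-1}P^k\Phi(x)$ is a Ces\`aro mean. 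Since $X$ is compact and $\Phi$ is bounded continuous (dominated convergence using $\|\sigma(g,\cdot)\|^2\le |g|^2\in L^1(\mu)$), the measures $\bar\mu_{n,x}:=\tfrac{1}{n}\sum_{k=0}^{n-1}\mu^{*k}*\delta_x$ are tight and every weak limit is $\mu$-stationary by Krylov--Bogolyubov. Consequently every subsequential limit of $v_n(x)/n$ has the form $\int\Phi d\nu$ with $\nu$ some $\mu$-stationary measure; the $X$-control should then force these limits to coincide in a single matrix $A$ and give uniform convergence $v_n(x)/n\to A$ on $X$.

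For the CLT in~(2), the bracket $\langle M(x)\rangle_n=\sum_{k=1}^n\Phi(b_{k-1}\cdots b_1 x)$ satisfies $\mathbb{E}[\langle M(x)\rangle_n]/n=v_n(x)/n\to A$ by Step~2, and this upgrades to convergence in probability by a standard variance estimate (Chebyshev, using that $\Phi$ is bounded). The Lindeberg condition for the triangular array $\{\Delta_k/\sqrt n\}$ follows from the uniform integrability of $|b_k|^2$ under $\mu$. Brown's (or Hall--Heyde's) martingale CLT then yields $M_n(x)/\sqrt n\to \mathscr{N}(0,A)$ weakly for each fixed $x$. The $X$-control finally provides $(M_n(x)-M_n(y))/\sqrt n\to 0$ in probability uniformly in $n,x,y$, so the weak limit is independent of $x$ and the convergence in~(2) holds uniformly.

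The main obstacle lies in the covariance step: showing that $\int\Phi d\nu$ agrees across all $\mu$-stationary $\nu$ and that $v_n(x)/n\to A$ uniformly both reduce to $\sup_{x,y}\|v_n(x)-v_n(y)\|/n\to 0$. The natural bound via $\|aa^t-bb^t\|\le\|a-b\|(\|a\|+\|b\|)$ and Cauchy--Schwarz uses $\|M_n(x)\|_{L^2}=O(\sqrt n)$ (from orthogonality of martingale increments) but needs an $o(\sqrt n)$ bound on $\|\sigma(\cdot,x)-\sigma(\cdot,y)\|_{L^2(\mu^{*n})}$, whereas $X$-control only provides tightness in $L^0$. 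Under the mere second-moment hypothesis on $\mu$ (no fourth moment), this gap must be closed by a careful truncation: the event $\{\|\sigma(g,x)-\sigma(g,y)\|>R\}$ is contained in $\{2|g|>R\}$, and the uniform integrability of $|b_k|^2$ under $\mu$, propagated along the martingale decomposition of $M_n(x)-M_n(y)$, should absorb the tail.
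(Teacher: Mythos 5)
Your martingale setup and your identification of $v_n(x)/n$ as a Ces\`aro mean of the type $\frac{1}{n}\sum_{k<n}\mu^{*k}*\delta_x$ applied to a bounded continuous function are the same as the paper's; the Krylov--Bogolyubov compactness step is also the same. But you then flag the right obstacle (uniqueness of $\int\Phi\,d\nu$ across $\mu$-stationary $\nu$) and try to resolve it by showing $\sup_{x,y}\|v_n(x)-v_n(y)\|/n\to 0$ via an $L^2$ estimate and truncation -- and that route genuinely fails. The $X$-control gives only tightness of $\sigma(g,x)-\sigma(g,y)$ under $\mu^{*n}$ in $L^0$; to feed it into the bound you propose you would need $\|M_n(x)-M_n(y)\|_{L^2}=o(\sqrt n)$. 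But $M_n(x)-M_n(y)$ is itself a martingale, so by orthogonality $\|M_n(x)-M_n(y)\|_{L^2}^2=\sum_{k\le n}\E\big[(\sigma(b_k,z_k^x)-\sigma(b_k,z_k^y))^2\big]$, and each term is only bounded (not small), giving $O(\sqrt n)$ rather than $o(\sqrt n)$. Moreover the truncation you sketch estimates the tail $\int_{\{|\sigma(g,x)-\sigma(g,y)|>R\}}(\cdot)^2\,d\mu^{*n}$ by uniform integrability of $|g|^2$, but $|g|^2$ is $O(n^2)$ in $L^1(\mu^{*n})$ and $\mu^{*n}(\{2|g|>R\})\to 1$ as $n\to\infty$; the smallness of the $\mu^{*n}$-mass of $\{|\sigma(g,x)-\sigma(g,y)|>R\}$ cannot be played against the unboundedness of $|g|^2$ with only a second-moment hypothesis (even a fourth moment would leave you with $\varepsilon^{1/2}\cdot O(n^2)$). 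A smaller problem is the claim that the bracket $\langle M(x)\rangle_n/n\to A$ in probability ``by Chebyshev, using that $\Phi$ is bounded'': the summands $\Phi(b_{k-1}\cdots b_1 x)$ are not independent, so boundedness of $\Phi$ gives variance $O(1)$ for the average, not $o(1)$.

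The paper closes the gap by reversing the logical order. It proves the one-dimensional CLT for a \emph{single} $x_0$ chosen to be Birkhoff-generic for some ergodic stationary $\nu$: then $W_n(b)=\frac{1}{n}\sum_{k\le n}M(b_{k-1}\cdots b_1 x_0)\to\nu(M)$ in $L^1$ by the pointwise ergodic theorem, and Brown's martingale CLT gives the CLT with variance $A=\nu(M)$ for that $x_0$. Only then does it bring in the $X$-control: since $\big(\sigma(g,x)-\sigma(g,x_0)\big)/\sqrt n\to 0$ in probability uniformly in $x$, the CLT extends with the same $A$ to every $x$, uniformly. From this, for any other ergodic stationary $\nu'$, choosing a $\nu'$-generic point and running the same argument gives a CLT with variance $\nu'(M)$, which must equal $A$ -- so $\nu'(M)=A$ for all stationary $\nu'$ with no $L^2$ comparison needed. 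The covariance convergence and its uniformity then follow from your Krylov--Bogolyubov observation exactly as you outlined. In short: do not try to prove~(1) before~(2); prove~(2) first for a generic base point and use the invariance of the Gaussian limit in $x$ (granted by $X$-control) to deduce~(1).
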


\bigskip

\noindent{\bf Remark}. 
1) We see from the proof that the limit covariance matrix $A$ satisfies 
$$A= \frac{1}{n} \int_{G\times X} \sigma(g,x)\,{^t}\sigma(g,x)\,d\mu^{*n} (g)d\nu (x)  $$
for any $n\geq 1$, any $\mu$-stationary probability measure $\nu$ on $X$. One says that $\sigma$ has \emph{unique covariance}.  

2) Benoist and Quint  prove a  CLT where the $(\mu, X)$-control assumption is replaced by the condition that $\sigma$ is continuous with unique covariance \cite[Theorem 3.4]{BQ16-CLTlin}. The $(\mu,X)$-control assumption seems to be more convenient. For instance, it applies directly to both the linear case and the hyperbolic case, wheras establishing unique covariance in the hyperbolic setting requires to go through the whole proof of the CLT, as in \cite[Theorem 4.7]{BQ16-CLThyp}.

\bigskip

For this paper, we actually \emph{need a stronger version of the CLT}, dealing not only with $\mu^{*n}$ but with $\mu^{*\tau_{n}}$ where $\tau_{n}$ is a stopping time distributed around $n$ at scale $\sqrt{n}$. Let us define this notation.

Given $\tau :B\rightarrow \N\cup \{\infty\}$  be a $\beta$-a.e. finite stopping time, we define $\mu^{*\tau}$ as the distribution of the variable $b_{\tau(b)}\dots b_{1}$ when $b$ varies with law $\beta$. 

We say that a family $(\tau_{n})_{n\geq 1}$ of stopping times is \emph{tight around $n$} if for all $\eps>0$, there exists $R>0$ such that 
$$\inf_{n\geq 1} \beta\{\tau_{n} \in [n-R\sqrt{n}, \,n+R\sqrt{n}]\}\geq 1-\eps $$

\begin{prop}[CLT with stopping times] \label{general-CLT+}
Assume $\mu$ has finite second moment, and $\sigma$ has constant zero drift and is $(\mu,X)$-controlled.   Let $(\tau_{n})_{n\geq1}$ be a family  of stopping times which is tight around $n$. Then uniformly in $x  \in X$, as $g$ varies with law $\mu^{*{\tau_{n}}}$,
$$ \frac{\sigma(g,x)}{\sqrt{n}} \underset{n\to +\infty}{\wconv} \mathscr{N}(0, A) $$ 
\end{prop}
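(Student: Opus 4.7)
The plan is to deduce the stopping-time version from Proposition~\ref{general-CLT} via a martingale maximal-inequality argument. Writing $S_k(x) := \sigma(b_k \cdots b_1, x)$, Proposition~\ref{general-CLT} already gives that $S_n(x)/\sqrt{n}$ converges in law to $\mathscr{N}(0,A)$ uniformly in $x \in X$. By Slutsky's lemma, it therefore suffices to prove that, uniformly in $x$,
\[
\frac{S_{\tau_n}(x)-S_n(x)}{\sqrt n} \xrightarrow[n\to\infty]{} 0 \quad \text{in } \beta\text{-probability.}
\]

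The key observation is that $(S_k(x))_{k \geq 0}$ is a $(\mathcal{B}_k)$-martingale: the cocycle relation yields $S_k(x)-S_{k-1}(x) = \sigma(b_k, b_{k-1}\cdots b_1 x)$, which has zero conditional mean under $\mathcal{B}_{k-1}$ by the zero-drift assumption on $\sigma$. Moreover, using $|\sigma(g,y)|\leq|g|$ and the finite second moment hypothesis on $\mu$,
\[
\E\!\left[|S_k(x)-S_{k-1}(x)|^2\mid\mathcal{B}_{k-1}\right] \leq \int_G |g|^2\,d\mu(g) =: M^2 < \infty,
\]
with a bound that is uniform in $x$ and in the past.

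Given $\eps > 0$, the tightness of $(\tau_n)$ around $n$ provides $R > 0$ such that $\beta(|\tau_n - n| > R\sqrt n) < \eps$ for all $n$. On the complementary event, $|S_{\tau_n}(x) - S_n(x)|$ is dominated by the oscillation of the martingale on the window $[n - R\sqrt n,\, n + R\sqrt n]$. Setting $m := \lceil n - R\sqrt n \rceil$ and applying Doob's $L^2$-maximal inequality to the martingale $(S_k(x)-S_m(x))_{k\geq m}$, I would obtain
\[
\beta\!\left(\max_{m \leq k \leq m+\lceil 2R\sqrt n\rceil} |S_k(x)-S_m(x)| \geq \alpha \sqrt n\right) \leq \frac{2RM^2}{\alpha^2 \sqrt n} \xrightarrow[n\to\infty]{} 0,
\]
uniformly in $x$. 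A triangle inequality then yields $\max_{|k-n| \leq R\sqrt n} |S_k(x) - S_n(x)| \leq 2\alpha \sqrt n$ with probability tending to $1-\eps$, giving the required convergence after letting first $n\to\infty$ and then $\alpha,\eps\to 0$.

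I do not anticipate a major obstacle here: the main subtlety is maintaining uniformity in $x$, but both the stopping time $\tau_n$ and the increment-variance bound $M^2$ are independent of $x$, so every estimate above is automatically uniform. The $X$-control hypothesis is not needed for this upgrade beyond its role in Proposition~\ref{general-CLT} itself; the stopping-time version reduces to a straightforward oscillation estimate for a martingale with uniformly bounded $L^2$-increments.
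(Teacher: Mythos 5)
Your proof is correct, and it takes a genuinely different route from the paper. The paper caps the stopping time at $q_{n}=\lfloor n+R\sqrt{n}\rfloor$, uses the cocycle relation to write
\[
\frac{\sigma(b_{\tau_{n}}\cdots b_{1},x)}{\sqrt n}=\frac{\sigma(b_{q_{n}}\cdots b_{1},x)}{\sqrt n}-\frac{\sigma(b_{q_{n}}\cdots b_{\tau_{n}+1},\,b_{\tau_{n}}\cdots b_{1}x)}{\sqrt n},
\]
identifies the first term as the ordinary CLT at time $q_{n}$, and kills the correction term by a strong-Markov argument: conditionally on $\tau_{n}$, the tail increment is a fresh walk of length $q_{n}-\tau_{n}\lesssim R\sqrt n$, and the uniformity in $X$ and in small time of \Cref{general-CLT} makes it $o(\sqrt n)$. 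You instead show directly that $(S_{\tau_{n}}(x)-S_{n}(x))/\sqrt n\to 0$ in probability, uniformly in $x$, by noting that $(S_{k}(x))_{k}$ is a $(\mathcal{B}_{k})$-martingale with increment variance uniformly bounded by $\int_{G}|g|^{2}\,d\mu$, and then applying Kolmogorov's maximal inequality on the window $[n-R\sqrt n,\,n+R\sqrt n]$; Slutsky's lemma then transfers the CLT from $S_{n}$ to $S_{\tau_{n}}$. Your argument is more elementary (it invokes only the martingale structure and a second-moment bound, not the CLT itself, for the correction term), it sidesteps the cocycle bookkeeping with the shift, and the uniformity in $x$ comes entirely for free since neither the stopping times nor the increment bound depend on $x$. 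Both proofs are valid; yours isolates more cleanly that the upgrade from $n$ to $\tau_{n}$ is a purely probabilistic oscillation estimate that uses nothing beyond finite second moment and zero drift.
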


\bigskip

\bigskip
We now engage in the proof of the CLT with no stopping time, i.e. \Cref{general-CLT}.  We will combine methods used in  \cite{BQ16-CLThyp} and \cite{BQRW}. Using Cramér-Wold device and the $(\mu,X)$-control assumption (for the uniformity), it is enough to check those statements when $\sigma$ is replaced by its image under a linear form. In other words, \emph{we may assume that $d=1$}. We start by a non-uniform version of the claims. 

\begin{lemma} \label{CLT-gen-lem}
There exists $x_{0}\in X$ and $A\in \R^+$ such that the following holds.  
  \begin{align} \label{var-gen}
\frac{1}{n} \int_{G}  \sigma(g,x_{0})^2 \,d\mu^{*n}(g)\longrightarrow A
  \end{align}
and as $g$ varies with law $\mu^{*n}$,
  \begin{align} \label{CLT-gen}
 \frac{\sigma(g,x_{0})}{\sqrt{n}} \underset{n\to +\infty}{\wconv} \mathscr{N}(0, A) 
   \end{align} 
\end{lemma}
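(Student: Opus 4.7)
The plan is to exploit the zero-drift hypothesis, which makes the partial sums of $\sigma$ a \emph{genuine martingale}, and then invoke a standard martingale CLT. Concretely, fix $x_{0}\in X$ (to be chosen later), set $x_{k}:=b_{k}\cdots b_{1}x_{0}$ and $Y_{k}:=\sigma(b_{k},x_{k-1})$. The cocycle relation gives $S_{n}:=\sigma(b_{n}\cdots b_{1},x_{0})=\sum_{k=1}^{n}Y_{k}$. Since $b_{k}$ is independent of $\mathcal{B}_{k-1}$ and $x_{k-1}$ is $\mathcal{B}_{k-1}$-measurable, the zero-drift assumption yields
\[
\E[Y_{k}\mid \mathcal{B}_{k-1}]=\int_{G}\sigma(g,x_{k-1})\,d\mu(g)=0,
\]
so $(S_{n})$ is a martingale with bracket $\langle S\rangle_{n}=\sum_{k=1}^{n}Q(x_{k-1})$, where $Q(x):=\int_{G}\sigma(g,x)^{2}d\mu(g)$. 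Note $Q$ is bounded by $\int|g|^{2}d\mu(g)<\infty$.

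The key step is choosing $x_{0}$ so that $\frac{1}{n}\langle S\rangle_{n}$ converges to a deterministic limit. Since $X$ is compact and the transition operator $P\phi(x)=\int\phi(gx)d\mu(g)$ preserves $C(X)$, there exists a $\mu$-stationary probability measure $\nu$ on $X$ (Kakutani–Markov); replace $\nu$ by one of its ergodic components. The Markov chain $(x_{k})$ with initial distribution $\nu$ is then stationary and ergodic on $(B\times X,\beta\otimes\nu)$, so Birkhoff's ergodic theorem gives, for $(\beta\otimes\nu)$-almost every $(b,x_{0})$,
\[
\frac{1}{n}\langle S\rangle_{n}\;=\;\frac{1}{n}\sum_{k=0}^{n-1}Q(x_{k})\;\longrightarrow\;A:=\int_{X}Q\,d\nu.
\]
Pick $x_{0}$ in the full measure set where this holds for $\beta$-a.e.\ $b$.

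It remains to verify the Lindeberg condition: for every $\eps>0$,
\[
\frac{1}{n}\sum_{k=1}^{n}\E\bigl[Y_{k}^{2}\mathbf{1}_{|Y_{k}|>\eps\sqrt{n}}\bigr]\;\le\;\int_{G}|g|^{2}\mathbf{1}_{|g|>\eps\sqrt{n}}\,d\mu(g)\;\longrightarrow\;0,
\]
using the uniform bound $|Y_{k}|\le|b_{k}|$ and the finite second moment of $\mu$. Together with the convergence of $\frac{1}{n}\langle S\rangle_{n}$ in probability to the constant $A$, the martingale CLT (Brown/McLeish) yields $S_{n}/\sqrt{n}\Rightarrow\mathscr{N}(0,A)$, which is \eqref{CLT-gen}. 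For the variance statement \eqref{var-gen}, orthogonality of martingale increments gives $\int_{G}\sigma(g,x_{0})^{2}d\mu^{*n}(g)=\E[S_{n}^{2}]=\sum_{k=1}^{n}\E[Q(x_{k-1})]$, and boundedness of $Q$ combined with the almost sure ergodic convergence above allows us to pass to the limit (dominated convergence) to obtain $\frac{1}{n}\E[S_{n}^{2}]\to A$.

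The proof is essentially routine once the zero-drift hypothesis is recognized as providing the martingale structure. The only subtle point is that the conclusion is existential in $x_{0}$: we cannot claim pointwise convergence of $\frac{1}{n}\langle S\rangle_{n}$ uniformly in $x$ without the $X$-control assumption of Proposition~\ref{general-CLT}, so here one merely picks a typical point under some ergodic component of a $\mu$-stationary measure. The uniform upgrade (Proposition~\ref{general-CLT}) will require separately the $X$-control hypothesis to replace this ergodic selection by a statement valid for every $x\in X$.
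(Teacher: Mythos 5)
Your proof is correct and takes essentially the same route as the paper: identify $S_n=\sigma(b_n\cdots b_1,x_0)$ as a martingale, control the predictable bracket $\frac{1}{n}\sum_k Q(x_{k-1})$ via Birkhoff's ergodic theorem applied to an ergodic $\mu$-stationary measure at a generic $x_0$, verify the Lindeberg condition using the uniform bound $|Y_k|\le |b_k|$ and the $L^2$ moment assumption, and invoke Brown's martingale CLT; the variance statement then follows by orthogonality of increments and dominated convergence. The only cosmetic difference is that the paper checks the conditional Lindeberg condition directly while you check the unconditional one, which implies the conditional version in probability via Markov's inequality.
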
 

\begin{proof}
Let $x_{0}\in X$ be a parameter and for $k\geq 1$, $b \in B$, introduce the martingale difference
  \begin{align*}
  X_{k}(b) &= \sigma(b_{k}, b_{k-1}\dots b_{1}x_{0})
  \end{align*}
We only need check to that for some good choice of $x_{0}$, there exists $A\geq0$ such that
  \begin{itemize}
 \item[(i)]  $W_{n}:= \frac{1}{n}\sum_{k =1}^n \E(X_{k}^2|\mathcal{B}_{k-1}) \longrightarrow A$ in $L^1$
 \item[(ii)] For all $\eps>0$,  $W_{\eps, n}:= \frac{1}{n} \sum_{k= 1}^n \E(X_{k}^2 {\bf 1}_{\{|X_{k}|>\eps\}} |\mathcal{B}_{k-1}) \longrightarrow 0$ in probability
  \end{itemize}
  Indeed, this is sufficient: Brown's CLT for martingales \cite{Brown71} yields the convergence $\frac{1}{\sqrt{n}}\sum_{i}X_{i} \to  \mathscr{N}(0, A)$ which is \eqref{CLT-gen}, and integrating item (i) yields  \eqref{var-gen} (use the zero drift condition and the cocycle relation).
  
  Let us check (i). We observe that  $W_{n}(b)= \frac{1}{n}\sum_{1\leq k\leq n} M(b_{k-1}\dots b_{1}x_{0})$ where $M:X\rightarrow \R^+$ is the bounded function defined by
  \begin{align*}
  M(y) =\int_{G}\sigma(g,y)^2d\mu(g)  \tag{$y\in X$}
   \end{align*}
   
   \details{ Choose $\nu$ to be any ergodic  $\mu$-stationary probability measure on $X$. In other terms, the dynamical system $(B\times X, \beta\otimes \nu, T^X)$ where $T^X: (b_{i})_{i\geq 1},x) \mapsto ((b_{i})_{i\geq 2},b_{1}x)$ is measure preserving ergodic \cite[Proposition 2.9]{BQRW}. By Birkhoff Ergodic Theorem, for all $x$ in a set $E$ of $\nu$-measure $1$, $\beta$-almost every $b$, one has $W_{n}(b)\rightarrow \nu(M)$ where $\nu(M)$ is the $\nu$-average of $M$, that is 
      \begin{align*}
\nu(M)=\int_{G\times X} \sigma(g,x)^2\,d\mu(g)d\nu (x)  =:A
  \end{align*}
  To get (i), choose $x_{0}$ in $E$, and note that almost sure convergence implies convergence in $L^1(B,\beta)$ for  sequence of functions that are uniformly bounded. }

For item (ii),  observe the equality $W_{\eps, n}(b)= \frac{1}{n}\sum_{1\leq k\leq n}u_{n}(b_{k-1}\dots b_{1}x_{0})$ where 
  \begin{align*}
  u_{n}(y) &=\int_{G} \sigma(g,y)^2 {\bf 1}_{\{|\sigma(g, y)| > \eps\sqrt{n} \}} \,d\mu(g)  \tag{$y\in X$}\\
  &\leq \int_{G} |g|^2 {\bf 1}_{\{|g| > \eps\sqrt{n} \}} \,d\mu(g)  
   \end{align*}
  By the $L^2$ assumption on $\mu$, we have  $u_{n}(y)\rightarrow 0$ uniformly in $y$. Hence  $W_{\eps, n}\to 0$. 
  \end{proof}

We now upgrade \Cref{CLT-gen-lem} to \Cref{general-CLT}.

\begin{proof}[Proof of \Cref{general-CLT}]
Let $x_{0}\in X$ and $A$ be as in  \Cref{CLT-gen-lem}. The uniform CLT follows directly from \eqref{CLT-gen} and the  $(\mu,X)$-control assumption on $\sigma$. 

To check the convergence of variances, we  must prove that 
  \begin{align*} 
\frac{1}{n} \int_{G}  \sigma(g,x)^2 \,d\mu^{*n}(g)\longrightarrow A
  \end{align*}
 uniformly in $x\in X$. Using the cocycle relation and the zero drift assumption, we can rewrite the left-hand side 
   \begin{align*} 
\frac{1}{n} \int_{G}  \sigma(g,x)^2 \,d\mu^{*n}(g)= \underbrace{\left(\frac{1}{n}\sum_{ k=0}^{n-1}\mu^{*k}*\delta_{x}\right)}_{\kappa_{n,x}}(M)
  \end{align*}
where $M$ is the   function defined previously. To conclude, we need to check the following lemma. 
\details{
\begin{lemma} \label{lemma-nuM}
The sequence of empirical averages $(\kappa_{n,x}(M))_{n\geq 1}$  converges to $A$ uniformly in $x\in X$. 
\end{lemma}
\begin{proof}[Proof of \Cref{lemma-nuM}]
We first observe that for any $\mu$-stationary probability measure $\nu'$ on 
$X$, the $\nu'$-average of $M$ is equal to $A$. Indeed, one can assume $\nu'$ ergodic. The proof of \Cref{CLT-gen-lem} shows that $\nu'(M)$ is the limit covariance in the CLT for $\sigma(., x'_{0})$ where $x'_{0}$ is typical for $\nu'$. But we also note that the $(\mu,X)$-control assumption on $\sigma$ implies that a CLT for $\sigma(., x)$ implies the same CLT  for $\sigma(., y)$ where $y\in X$ is any other spatial coordinate. Hence, the limit covariance is independent of $\nu'$. This justifies that $\nu'(M)=A$ for all stationary probability measure  $\nu'$ on $X$.

To conclude, we now assume by contradiction that $\kappa_{n,x}(M)$ does not converge to $A$ uniformly in $x\in X$, i.e. we have some sequence $\kappa_{n,x_{n}}(M)$ at distance $\eps>0$ from $A$. But $M$ is bounded continous and any accumulation point of  $\kappa_{n,x_{n}}$ is stationary, whence a contradiction with the first paragraph. 
\end{proof}
This concludes the proof of  \Cref{general-CLT}.}
\end{proof}

\bigskip
We now prove the  CLT with  stopping times.

\begin{proof}[Proof of \Cref{general-CLT+}]
 
 Let $R>0$ be some (large) fixed parameter and set, $q_{n}=\lfloor n+R\sqrt{n} \rfloor$, then $\tau'_{n}=\min(\tau_{n}, q_{n})$. By the tightness assumption,  $\tau'_{n}=\tau_{n}$ on a set of $\beta$-measure arbitrarily close to $1$ as long as $R$ is large enough.  Hence, it is sufficient to check the above convergence for the stopping times $\tau'_{n}$, or in other words, we may assume $\tau_{n}\leq q_{n}$.  Now we can use the cocycle property to write
\begin{align*}
\frac{\sigma(b_{\tau_{n}(b)}\dots b_{1}, x)}{\sqrt{n}}  =\frac{\sigma (b_{q_{n}}\dots b_{1},x)}{\sqrt{n}}- \frac{\sigma (b_{q_{n}}\dots b_{\tau_{n}(b)+1},\,b_{\tau_{n}(b)}\dots b_{1}x)}{\sqrt{n}}
\end{align*}
By the CLT in \Cref{general-CLT}, the first term on the right-hand side  converges in law to $\mathscr{N}(0, A)$, uniformly in $x\in X$.  We only need to check that the second term goes to $0$ in probability and uniformly in $x$. Denote by $T:B\rightarrow B, (b_{i})_{i\geq 1}, \mapsto (b_{i+1})_{i\geq 1}$ the one-sided shift. Observing that $T^{\tau_{n}(b)}b$ has law $\beta$, we just need to check   that as $a, b$ vary independently in $B$ with law $\beta$, we have the convergence in probability
$$\frac{\sigma (a_{q_{n}-\tau_{n}(b)}\dots a_{1},\,y)}{\sqrt{n}} \,\pconv\, 0$$
uniformly in $y\in X$. This follows from the tightness condition on the $\tau_{n}$ and the uniformlity of the CLT in \Cref{general-CLT}.

\end{proof}

\subsection{Law of the iterated logarithm}

We prove the law of the iterated logarithm for cocycles.

\begin{prop} \label{general-LIL}
Assume $\mu$ has finite second moment, $\sigma$ has constant zero drift and is $(\mu,X)$-controlled. Let $A$ be the limit covariance matrix in the CLT (\ref{general-CLT}). 
Then,  for every $x\in X$, for $\beta$-almost every $b\in B$, 
$$\left\{\text{ Accumulation points of } \left(\frac{\sigma(b_{n}\dots b_{1}, x)}{\sqrt{2n \log \log n}}\right)_{n\geq 3}  \right\} = A^{1/2}(\overline{B}(0,1)) $$
where $\overline{B}(0,1)=\{t \in \R^d, \sum_{i}t_{i}^2\leq 1\}$.  
\end{prop}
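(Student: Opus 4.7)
The plan is to realize $S_n := \sigma(b_n\dots b_1, x)$ as a sum of martingale differences and apply a multidimensional martingale LIL (Stout--Heyde--Scott style). Setting $Y_k := b_k\dots b_1 x$ and $X_k(b) := \sigma(b_k, Y_{k-1}(b))$, the cocycle relation and the zero-drift hypothesis make $(X_k)_{k\geq 1}$ a martingale difference sequence with respect to $(\mathcal{B}_k)$, with $S_n = \sum_{k=1}^n X_k$. The pointwise bound $|X_k|\leq |b_k|$ together with the second moment of $\mu$ provides the integrability required throughout.

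The main task is establishing the almost-sure convergence of the normalized conditional bracket
\[\frac{1}{n}\sum_{k=1}^n \E(X_k\,{^tX_k}\,|\,\mathcal{B}_{k-1}) \longrightarrow A.\]
Writing $M(y) := \int_G \sigma(g,y)\,{^t\sigma(g,y)}\,d\mu(g)$, the conditional bracket equals $M(Y_{k-1})$. The continuity assumption on $\sigma(g,\cdot)$ from the $X$-controlled hypothesis, combined with $\|\sigma(g,\cdot)\|\leq |g|$ and dominated convergence, shows that $M$ is continuous on the compact space $X$; moreover $\|M\|\leq \int|g|^2 d\mu(g)<\infty$ is bounded. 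Consider the random empirical measures $\kappa_n^x(b):=\frac{1}{n}\sum_{k=0}^{n-1}\delta_{Y_k(b)}$. For any continuous bounded $f:X\to\R$, the Ces\`aro averages of the uniformly bounded martingale differences $f(Y_k) - \mu*f(Y_{k-1})$ tend to $0$ a.s., so every weak-$*$ accumulation point $\nu_\infty$ of $(\kappa_n^x(b))_n$ is $\mu$-stationary. As observed in the remark following \Cref{general-CLT}, every $\mu$-stationary probability measure on $X$ integrates $M$ to the \emph{same} value $A$; the continuity and boundedness of $M$ then promote this to $\kappa_n^x(b)(M)\to A$ for $\beta$-a.e.\ $b$.

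The remaining Lindeberg-type condition is immediate from $|X_k|\leq |b_k|$ and $\E|g|^2<\infty$: uniformly in the conditioning, $\E(X_k^2\mathbf{1}_{|X_k|>\varepsilon\sqrt n}\,|\,\mathcal{B}_{k-1})\leq \E(|g|^2 \mathbf{1}_{|g|>\varepsilon\sqrt n})\to 0$. The multidimensional martingale LIL then yields that the set of accumulation points of $S_n/\sqrt{2n\log\log n}$ is contained in $A^{1/2}(\overline B(0,1))$, with each projection realizing its extremal value (via Cram\'er--Wold reduction to one-dimensional cocycles, for which $\limsup \langle v,S_n\rangle/\sqrt{2n\log\log n} = \sqrt{v^T\! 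A v}$). Equality of the accumulation set to the full ellipsoid follows from the fact that the increments satisfy $|X_{n+1}|\leq |b_{n+1}| = o(\sqrt{n\log\log n})$ almost surely, an easy application of Borel--Cantelli using $\sum_n \Pb(|g|^2 > \varepsilon^2 n\log\log n) \leq \varepsilon^{-2}\E|g|^2<\infty$, so consecutive values stay close and cannot skip over interior points.

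The main obstacle is the almost-sure Ces\`aro convergence $\frac{1}{n}\sum M(Y_{k-1})\to A$ for an arbitrary (not $\nu$-generic) initial point $x$. The crucial inputs making the empirical measure argument succeed are the continuity and boundedness of $M$ (arising from the $X$-controlled hypothesis together with the second moment of $\mu$) and the fact, already used in the CLT proof, that all $\mu$-stationary measures integrate $M$ to the common value $A$.
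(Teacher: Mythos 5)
The central gap is the invocation of ``a multidimensional martingale LIL (Stout--Heyde--Scott style)'' after verifying only the conditional Lindeberg condition and the a.s.\ convergence of the bracket $\frac1n\sum\E(X_k\,{^tX_k}|\mathcal{B}_{k-1})\to A$. These hypotheses are sufficient for the martingale CLT, but \emph{not} for any standard martingale LIL. Stout's theorem requires the a.s.\ increment bound $|X_n| = o\bigl(s_n/\sqrt{\log\log s_n^2}\bigr)$, i.e.\ $o\bigl(\sqrt{n/\log\log n}\bigr)$ in our normalization, whereas the Borel--Cantelli argument you give (correctly) yields only the far weaker $|X_n| = o\bigl(\sqrt{n\log\log n}\bigr)$ from finite second moment. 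Strassen's martingale LIL demands instead a Kolmogorov--Feller condition $\sum_n s_n^{-2}\,X_n^2\,\mathbf{1}_{|X_n|>\eps s_n}<\infty$ a.s., whose natural sufficient moment condition is $\E[|g|^2\log^+|g|]<\infty$, again strictly stronger than the hypothesis. Even for i.i.d.\ increments the Hartman--Wintner LIL under pure $L^2$ moments does not follow from a black-box martingale LIL; it requires a dedicated truncation argument. This is precisely why the paper introduces the level-$j$ truncations $X^{(j)}$ at threshold $\alpha\sqrt{j/\text{LL}\,j}$, shows the accumulated truncation error is $o(a_n)$ a.s., and then runs an exponential Chebyshev bound on the truncated martingale followed by a blocking / Borel--Cantelli scheme over $k_n\sim s^n$ -- exactly the de Acosta mechanism the paper cites as the point of the proof. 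Your proposal skips this entirely, and there is no theorem to fill the gap.

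A secondary issue is the lower bound. Even granting the containment of the accumulation set in $A^{1/2}(\overline B(0,1))$, the argument that ``consecutive values stay close and cannot skip over interior points'' does not show the accumulation set \emph{equals} the ellipsoid: you still need to establish, for each unit vector $v$, that $\limsup\langle v,S_n\rangle/a_n = \sqrt{{^t}v\,A\,v}$ actually holds (the hard direction of the one-dimensional LIL), and that has not been proved under the stated hypotheses. The paper deliberately isolates this: the upper bound is what requires the truncation machinery, and the reverse inclusion is delegated to \cite[Section 12.4]{BQRW}, whose argument the paper says can be followed verbatim. Your brackets-convergence lemma via empirical measures and unique covariance is correct and mirrors the spirit of the CLT proof, but it addresses a part of the argument that was never the bottleneck.
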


A similar LIL is presented in the book \cite{BQRW} under the assumption $\mu$ has finite exponential moment. To reach the optimal generality of $\mu$ having only finite second moment, we  build on truncation techniques  used by  de Acosta \cite{deAcosta83} in his  proof of the LIL for sum of i.i.d random variables.

\subsection*{The upper bound}

\details{We now prove the first half of \Cref{general-LIL}: the set of accumulation points of the renormalized cocycle is included in $A^{1/2}(\overline{B}(0,1))$. For this purpose, we may reduce the problem to dimension $1$.}

\begin{lemma}\label{GLIL-up-0}
We may assume without loss of generality that $d=1$ and $A=1$. 
\end{lemma}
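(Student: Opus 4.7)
The goal is to show that Proposition \ref{general-LIL} in the special case $d=1$, $A=1$ implies the full statement. Set $a_n := \sqrt{2n\log\log n}$ and $S_n := \sigma(b_n\cdots b_1, x)$. The plan is to perform two successive reductions.

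\emph{From $d$ to $1$.} The compact convex body $A^{1/2}\overline{B}(0,1) \subset \R^d$ is the intersection of the affine half-spaces $\{y : \langle v, y\rangle \le \sqrt{v^t A v}\}$ over $v \in S^{d-1}$, and by continuity of the support function it suffices to intersect over a countable dense $D \subset S^{d-1}$. Hence the upper inclusion $\{$acc.\ points of $S_n/a_n\} \subseteq A^{1/2}\overline{B}(0,1)$ reduces to checking, for every $v \in D$ and $\beta$-a.e.\ $b$, that
\[\limsup_n \frac{\langle v, S_n\rangle}{a_n} \le \sqrt{v^t A v}.\]
The projected cocycle $\sigma_v := \langle v, \sigma\rangle$ inherits all the hypotheses (zero $\mu$-drift, $X$-control, finite second moment, with control constants multiplied by $\|v\|$), and its limit variance in the CLT of \Cref{general-CLT} is precisely $v^t A v$. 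The lower inclusion admits a parallel reduction: one shows that for $v \in D$ each value in $[-\sqrt{v^t A v},\sqrt{v^t A v}]$ is achieved infinitely often along subsequences, then combines these by diagonalization and continuity in $v$.

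\emph{From $A$ to $1$.} In the non-degenerate direction $v^t A v > 0$, rescaling $\sigma_v$ by $(v^t A v)^{-1/2}$ yields a $1$-dim $X$-controlled cocycle of zero $\mu$-drift and limit variance $1$, so the desired bound is literally the output of the $1$-dim LIL with $A=1$. The degenerate direction $v^t A v = 0$ is treated by perturbation: if $w \in \R^d$ satisfies $w^t A w > 0$, then the non-degenerate case applied to the forms $v \pm \eps w$ yields
\[\limsup_n \frac{\langle v \pm \eps w, S_n\rangle}{a_n} \le \eps \sqrt{w^t A w};\]
subtracting and letting $\eps \downarrow 0$ along a countable sequence gives $\langle v, S_n\rangle = o(a_n)$ a.s. If no such $w$ exists, i.e.\ $A \equiv 0$, then the convergence $\frac{1}{n}\sum_k \mathbb{E}[X_k^2 \mid \mathcal{B}_{k-1}] \to 0$ from \Cref{general-CLT} together with a standard martingale LIL directly yields $S_n = o(a_n)$ almost surely.

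\emph{Main obstacle.} The only subtle step in this reduction is the degenerate direction $v^t A v = 0$; purely algebraic normalization does not reach it, but the perturbation argument above bypasses the issue cleanly, provided one has the $1$-dimensional LIL with $A=1$ in hand.
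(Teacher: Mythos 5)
Your reduction is correct and runs along the same lines as the paper's: reduce to $d=1$ via the Cramér–Wold device, using that membership in the ellipsoid $A^{1/2}\overline{B}(0,1)$ is characterised by countably many supporting half-spaces, then rescale the projected cocycle to unit variance. That is exactly the route the paper takes.

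You go somewhat further than the paper's two-line argument by explicitly treating the degenerate directions $v^{t}Av=0$, where the rescaling by $(v^{t}Av)^{-1/2}$ is unavailable and the literal reduction to $(d=1, A=1)$ does not apply. Your perturbation trick is correct: by Cauchy–Schwarz for the positive semidefinite form $A$ one has $v^{t}Aw=0$, so the limit variance of $\langle v\pm\eps w,\sigma\rangle$ is exactly $\eps^{2}\,w^{t}Aw$, and averaging the two one-sided bounds before sending $\eps\downarrow 0$ gives $\langle v, S_n\rangle=o(a_n)$ (you write ``subtracting'' but averaging is what is used). This repairs a small gap that the paper leaves implicit. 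One caution: your claim of a ``parallel reduction'' for the lower inclusion is not really a reduction — knowing that each one-dimensional projection achieves its extremal values i.o.\ does not by itself force the $\R^{d}$-valued accumulation set to fill all of $A^{1/2}\overline{B}(0,1)$ (it could, for instance, fill only the boundary). The paper avoids this issue by treating only the upper inclusion in this subsection and deferring the lower inclusion to an external reference, so the lemma as stated and used is unaffected.
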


\begin{proof}
Fix $x\in X$. If the case  $(d=1, A=1)$ holds, then for every linear form $\varphi$ on $\R^d$, for $\beta$-almost every  $b\in B$, the accumulation points of $\left(a_{n}^{-1}\varphi\circ \sigma(b_{n}\dots b_{1}, x)\right)_{n\geq 3} $ are included in $\varphi A^{1/2}(\overline{B}(0,1))$. Now  use that the inclusion in $A^{1/2}(\overline{B}(0,1))$ can be characterized via a countable set of linear forms on $\R^d$ to conclude. 
\end{proof}

\bigskip

From now on, we write $a_{n}=\sqrt{2n\log \log n}$.

\begin{lemma}\label{GLIL-up-1}
Fix $r>1$ arbitrary. It is enough to show that for all $x\in X$, for     $\beta$-almost every $b\in B$, 
\begin{align}  \label{GLIL-up-1.0}
\limsup \frac{1}{a_{n}} \sigma(b_{n}\dots b_{1}, x) \leq r^3
\end{align} 
\end{lemma}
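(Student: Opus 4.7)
The plan is to verify that the single-sided, ``loose'' bound of the lemma suffices to control both the upper and lower accumulation points in the reduced case $d=1$, $A=1$. The whole argument is a two-line reduction, so the writeup will be short and the main thing to check is that the hypotheses of the general LIL are preserved under the operations we perform on $\sigma$.

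First I would observe that, once the displayed estimate is established for every fixed $r>1$, one may apply it along a countable sequence $r_k \downarrow 1$. Intersecting the countably many $\beta$-full measure sets obtained, we get that for each $x \in X$, for $\beta$-almost every $b \in B$,
$$\limsup_{n\to\infty} \frac{\sigma(b_n \cdots b_1, x)}{a_n} \leq 1.$$
This delivers the upper half of the desired accumulation bound, namely that no accumulation point can exceed $1$.

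Next I would handle the lower half $\liminf \geq -1$ by symmetry. Set $\sigma' := -\sigma$. Then $\sigma'$ is again a measurable cocycle and still satisfies $\sup_{x\in X}|\sigma'(g,x)| \leq |g|$ with the same subadditive $|\cdot|$. The $X$-control property (continuity in $x$ and the uniform closeness estimate) is preserved, since $|\sigma'(g,x)-\sigma'(g,y)| = |\sigma(g,x)-\sigma(g,y)|$. The zero-drift condition under $\mu$ is preserved and, finally, the limit covariance supplied by \Cref{general-CLT} is unchanged, as it is expressed via second moments of $\sigma$, which are invariant under sign change. Thus $\sigma'$ satisfies all the standing assumptions of this subsection with the same constant $A=1$. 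Applying the hypothetical bound to $\sigma'$ (again along a countable sequence $r_k \downarrow 1$) yields
$$\limsup_{n\to\infty} \frac{-\sigma(b_n\cdots b_1,x)}{a_n} \leq 1, \qquad \text{i.e.,} \qquad \liminf_{n\to\infty} \frac{\sigma(b_n\cdots b_1,x)}{a_n} \geq -1.$$

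Combining the two bounds shows that the set of accumulation points of $(a_n^{-1}\sigma(b_n\cdots b_1,x))_{n\geq 3}$ is contained in $[-1,1]$, which coincides with $A^{1/2}(\overline{B}(0,1))$ under the normalizations $d=1$, $A=1$. This is the ``upper'' (containment) half of the LIL in \Cref{general-LIL} that remains to be proved in this case; the matching ``lower'' (surjectivity) half will be taken care of separately in the following subsection. There is no real obstacle here: the only point to be careful about is that all the structural hypotheses on $\sigma$ transfer to $-\sigma$, which we have just checked.
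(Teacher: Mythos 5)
Your proof is correct and takes the same approach as the paper's one-line argument (apply the estimate to $\sigma$ and to $-\sigma$, then let $r\to 1^+$ along a countable sequence). Your version simply makes explicit the verification that the standing hypotheses on $\sigma$ (cocycle identity, domination by $|\cdot|$, $X$-control, zero drift, limit covariance) are preserved under the sign change $\sigma \mapsto -\sigma$, which is a reasonable thing to record but not a different idea.
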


\begin{proof}
The upper bound in the LIL follows by applying \eqref{GLIL-up-1.0} to $\sigma$ and $-\sigma$, and by letting $r\to1^+$ along a countable sequence. 
\end{proof}

\bigskip
\begin{lemma}\label{GLIL-up-1}
Let $\delta>0$. To prove \eqref{GLIL-up-1.0}, we may assume 
\begin{align}  \label{GLIL-varpar}
\int_{G}\sigma(g,x)^2 \,d\mu(g) \leq 1+\delta 
\end{align} 
\end{lemma}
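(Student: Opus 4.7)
The plan is to reduce to the small-first-step-variance case by taking a convolution power of $\mu$ and rescaling $\sigma$. By the uniform convergence of variances in \Cref{general-CLT}, I can choose an integer $n_{0}\geq 1$ such that
\[\frac{1}{n_{0}}\int_{G}\sigma(g,x)^{2}\,d\mu^{*n_{0}}(g)\,\leq\,1+\delta,\qquad\forall x\in X.\]
Set $\mu':=\mu^{*n_{0}}$ and $\sigma'(g,x):=n_{0}^{-1/2}\sigma(g,x)$. A direct check shows that $\sigma'$ remains an $X$-controlled cocycle with $\sup_{x}|\sigma'(g,x)|\leq n_{0}^{-1/2}|g|$, that $\mu'$ has finite second moment, that $(\sigma',\mu')$ is centered with limit variance $A'=1$, and now satisfies the auxiliary assumption $\int \sigma'(g,x)^{2}\,d\mu'(g)\leq 1+\delta$.

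Assuming \eqref{GLIL-up-1.0} is established for pairs meeting this auxiliary bound, I apply it to $(\sigma',\mu')$: for every $x\in X$, for $(\mu')^{\otimes \N^{*}}$-almost every sequence $c=(c_{k})_{k\geq 1}$,
\[\limsup_{k\to+\infty}\,\frac{\sigma(c_{k}\dots c_{1},x)}{\sqrt{n_{0}}\,a_{k}}\,\leq\, r^{3}.\]
Realizing $c_{k}=b_{kn_{0}}b_{kn_{0}-1}\dots b_{(k-1)n_{0}+1}$ from a $\beta$-typical $b\in B$, and using the asymptotic $a_{kn_{0}}\sim\sqrt{n_{0}}\,a_{k}$, I get
\[\limsup_{k\to+\infty}\,\frac{\sigma(b_{kn_{0}}\dots b_{1},x)}{a_{kn_{0}}}\,\leq\, r^{3}\quad\text{for $\beta$-a.e.\ }b.\]
To pass from the subsequence $(kn_{0})_{k}$ to all $n$, I write $n=kn_{0}+j$ with $0\leq j<n_{0}$; the cocycle relation and subadditivity of $|\cdot|$ give
\[|\sigma(b_{n}\dots b_{1},x)-\sigma(b_{kn_{0}}\dots b_{1},x)|\,\leq\,\sum_{i=kn_{0}+1}^{(k+1)n_{0}}|b_{i}|\,=:\,S_{k}.\]
Since $a_{n}/a_{kn_{0}}\to 1$ uniformly on each block, it will suffice to show $S_{k}=o(a_{kn_{0}})$ almost surely to conclude.

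The main obstacle is this last estimate. The $(S_{k})_{k\geq 1}$ are i.i.d.\ (the $k$-th one depending only on the block $b_{kn_{0}+1},\dots,b_{(k+1)n_{0}}$) and $L^{2}$, but the $L^{2}$ moment alone does not give $\sum_{k}\beta(S_{k}>\eps a_{kn_{0}})<\infty$ via a naive Chebyshev bound. Instead I will use a tail-sum integral: the substitution $u=\eps^{2}x\log\log x$ yields
\[\sum_{k\geq 3}\beta\bigl(S_{k}>\eps\sqrt{k\log\log k}\bigr)\,\ll\,\int^{\infty}\frac{\beta(S_{1}^{2}>u)}{\log\log u}\,du\,\leq\,\frac{\E(S_{1}^{2})}{\log\log u_{0}}\,<\,\infty,\]
so Borel-Cantelli delivers $S_{k}=o(\sqrt{k\log\log k})$ almost surely, which is $o(a_{kn_{0}})$ since $n_{0}$ is fixed, closing the argument.
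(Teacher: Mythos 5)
Your proof is correct and follows the same reduction as the paper's: rescale $\sigma$ to $\frac{1}{\sqrt{n_0}}\sigma$, replace $\mu$ by $\mu^{*n_0}$, and use the uniform convergence of variances from \Cref{general-CLT} to bring the one-step variance below $1+\delta$. The main difference is one of explicitness: the paper simply asserts that, for fixed $k$, the sequences $a_n^{-1}\sigma(b_n\dots b_1,x)$ and $a_{nk}^{-1}\sigma(b_{nk}\dots b_1,x)$ have the same accumulation points, while you supply the argument --- passing the $\limsup$ bound from the subsequence $(kn_0)_k$ to all $n$ by estimating the block increments $S_k=\sum_{i=kn_0+1}^{(k+1)n_0}|b_i|$ and showing $S_k=o(a_{kn_0})$ almost surely. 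Your Borel--Cantelli computation with the substitution $u=\eps^2 x\log\log x$ is valid but heavier than needed: since the $S_k$ are i.i.d.\ and in $L^2$, the plain bound $\sum_k\beta(S_k>\eps\sqrt{k})=\sum_k\beta(S_1^2>\eps^2 k)\leq\eps^{-2}\E(S_1^2)<\infty$ already gives $S_k=o(\sqrt{k})$ almost surely, which suffices because $\sqrt{k}\leq a_{kn_0}$. So the extra $\log\log$ leverage buys nothing here. Minor remark: your tail integral $\int^\infty\beta(S_1^2>u)/\log\log u\,du$ should start at some $u_0>e$ so that $\log\log u$ stays positive, but that is clearly what you intend.
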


\begin{proof}
Notice that for fixed $k\geq 1$, and $\beta$-almost every $b\in B$, the sequences 
\begin{align*} 
a_{n}^{-1} \sigma(b_{n}\dots b_{1}, x)\,\,\,\, \,\,\,\, \,\,\,\, a_{nk}^{-1} \sigma(b_{nk}\dots b_{1}, x) \tag{$n\geq 1$}
\end{align*} 
have the same accumulation points. Hence, we can replace $(\sigma, \mu)$ by $(\frac{1}{\sqrt{k}}\sigma, \mu^{*k})$ without loss of generality.  By the assumption $A=1$, and the uniform convergence of variances in \Cref{general-CLT}, there exists $k \geq 1$ such that  for all $x\in X$, 
$$\int_{G}\frac{1}{k}\sigma(g,x)^2 \,d\mu^{*k}(g) \leq 1+\delta $$ 
whence the result.
\end{proof}

\bigskip
We now introduce truncations of $\sigma$. These are crucial to deal with the weak moment condition on $\sigma$. 
 
 Let $\alpha>0$ be some parameter to be specified later. For $j\geq 1$, set $\LL j=\log \log j$. We define  $X^{(j)}_{1}$ as the $\alpha \sqrt{j/\LL j}$-truncation of $\sigma$, and call $X^{(j)}$ the corresponding recentered variable: 
\begin{align*}
X^{(j)}_{1}(g,x) = \sigma(g,x) {\bf 1}_{|g|\leq \alpha \sqrt{\frac{ j}{\LL j}}} \,\,\,\, \,\,\,\, \,\,\,\, \,\,\,\, \,\,\,\,X^{(j)}(g,x) = X^{(j)}_{1}(g,x) - \E_{x}(X^{(j)}_{1})
\end{align*}
where $\E_{x}$ refers to the $\mu\otimes \delta_{x}$-average.   

For $(b,x)\in B\times X$, we set 
$$S_{n}(b,x)=\sum_{j=1}^n X^{(j)}(b_{j},b_{j-1}\dots b_{1}.x) $$
$S_{n}(b,x)$ corresponds to a version of $\sigma(b_{n}\dots b_{1}, x)$ where the $n$-increments arising from the cocycle relation have been truncated with increasing marge of error and (slightly) recentered. We first note that $S_{n}(b,x)$ is a good  approximation of $\sigma(b_{n}\dots b_{1}, x)$, at least for the purpose of proving the LIL:

\begin{lemma}\label{GLIL-up-1}
For every $\alpha>0$, for all $x\in X$, $\beta$-almost every $b\in B$, 
\begin{align}  \label{GLIL-up-app}
\sigma(b_{n}\dots b_{1}, x)= S_{n}(b, x)  +o(a_{n})
\end{align} 
\end{lemma}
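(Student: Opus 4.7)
My plan is to rewrite the difference $\sigma(b_n\cdots b_1, x) - S_n(b,x)$ as a $(\mathcal{B}_j)$-martingale, isolate a deterministic piece handled by Cesàro, and then control the random piece by a two-level truncation. Setting $y_j := b_{j-1}\cdots b_1.x$ (with $y_1=x$) and using the cocycle relation together with the zero-drift hypothesis (which gives $\E_{y_j}[X^{(j)}_1] = -\E_{y_j}[\sigma(g,y_j)\mathbf{1}_{|g|>T_j}]$),
\[
\sigma(b_n\cdots b_1,x) - S_n(b,x) = \sum_{j=1}^n \Delta_j^{(+)} \;-\; \sum_{j=1}^n \Delta_j^{(c)},
\]
with random tail $\Delta_j^{(+)} := \sigma(b_j,y_j)\mathbf{1}_{|b_j|>T_j}$, deterministic centering $\Delta_j^{(c)} := \E_{y_j}[\sigma(g,y_j)\mathbf{1}_{|g|>T_j}]$, and $T_j := \alpha\sqrt{j/\LL j}$.

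The centering part is straightforward. Since $|g|\leq |g|^2/T_j$ on $\{|g|>T_j\}$, one has $|\Delta_j^{(c)}|\leq \epsilon_j/T_j$, where $\epsilon_j := \int_G|g|^2\mathbf{1}_{|g|>T_j}\,d\mu(g)$. The finite second moment gives $\epsilon_j\to 0$ by dominated convergence, and since $\sum_{j\leq n}T_j^{-1} = \alpha^{-1}\sum_{j\leq n}\sqrt{\LL j/j}\asymp a_n$, the elementary Cesàro lemma (if $c_j\geq 0$ with $\sum c_j=\infty$ and $\epsilon_j\to 0$, then $\sum_{j\leq n}\epsilon_j c_j = o(\sum_{j\leq n}c_j)$) yields $\sum_{j\leq n}|\Delta_j^{(c)}| = o(a_n)$, uniformly in $b$ and $x$.

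For the random contribution I first remove very large values of $|b_j|$. By Fubini,
\[
\sum_{j\geq 1}\Pb(|b_j|>a_j) \;=\; \E\bigl[\#\{j\geq 1 : 2j\LL j < |b|^2\}\bigr],
\]
and since $\LL j \geq 1$ for $j\geq 16$, the counted set has cardinality at most $|b|^2/2 + 15$, which is $\mu$-integrable. Borel-Cantelli then forces $|b_j|\leq a_j$ eventually a.s., so beyond some random $J(b)$, $\Delta_j^{(+)}$ is supported on the moderate range $\{T_j<|b_j|\leq a_j\}$. On that range I dyadically decompose into shells $\{2^kT_j<|b_j|\leq 2^{k+1}T_j\}$ for $0\leq k\leq \log_2(a_j/T_j)\asymp \log\LL j$: each shell's martingale increment is uniformly bounded by $2^{k+1}T_j$ with conditional second moment controlled by $\int_{|g|>2^kT_j}|g|^2\,d\mu\to 0$. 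Applying Doob's $L^2$ maximal inequality shell by shell along a dyadic subsequence $n_l=2^l$, summing the geometrically decaying contributions in $k$, and invoking Borel-Cantelli together with the monotonicity of $\sum_{j\leq n}|\Delta_j^{(+)}|$ between dyadic indices, I conclude $\sum_{j\leq n}\Delta_j^{(+)} = o(a_n)$ a.s.

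The main obstacle is this final almost-sure estimate. The crude $L^2$ bound $\E\bigl(\sum_{j\leq n}\Delta_j^{(+)}\bigr)^2 \leq \sum_{j\leq n}\epsilon_j = o(n)$ only gives $M_n/a_n\to 0$ in probability; dyadic Borel-Cantelli applied to this $L^2$ bound cannot be pushed to almost-sure convergence because $\epsilon_j\to 0$ carries no quantitative rate under a bare second moment on $\mu$. The shell-by-shell refinement inside the range $(T_j, a_j]$ is precisely the de Acosta-style truncation trick that promotes convergence in probability to the almost-sure $o(a_n)$ bound.
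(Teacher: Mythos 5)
Your opening decomposition is exactly the paper's starting point: by the cocycle relation, the zero-drift hypothesis, and $|\sigma(g,x)|\le|g|$, the difference $\sigma(b_n\cdots b_1,x)-S_n(b,x)$ is bounded in absolute value by
\[
\sum_{j=1}^n|b_j|\,\mathbf 1_{|b_j|>T_j}\;+\;\sum_{j=1}^n\E\big(|g|\,\mathbf 1_{|g|>T_j}\big),
\]
which is precisely Inequality \eqref{GLIL-up-approx}. Your treatment of the centering sum (Ces\`aro with weights $T_j^{-1}$, using $\sum_{j\le n}T_j^{-1}\asymp a_n$) is correct and is in fact the deterministic half of what de Acosta's Lemma 2.3 gives. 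Where you diverge from the paper is that the paper simply cites \cite[Lemma 2.3]{deAcosta83} for the random i.i.d.\ sum $\sum_{j\le n}|b_j|\mathbf 1_{|b_j|>T_j}=o(a_n)$ a.s., whereas you try to reprove this from scratch.

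That attempt has a genuine gap. You claim the shell decomposition yields ``geometrically decaying contributions in $k$,'' but under a bare second moment hypothesis the tails $\int_{|g|>M}|g|^2\,d\mu$ have no quantitative decay rate in $M$; in particular there is no geometric decay as $2^kT_j$ increases. As you yourself observe, the plain $L^2$/Doob bound at a dyadic scale $n_l=2^l$ gives a probability of order $o(1)/\LL n_l\asymp o(1)/\log l$, and $\sum_l 1/\log l=\infty$, so Borel--Cantelli is not available; but the shell refinement does not repair this, because the $k=0$ shell alone already carries second-moment mass $\sum_{j\le n}\int_{|g|>T_j}|g|^2\,d\mu$ with no rate beyond $o(n)$. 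There is also an internal ambiguity between treating $\sum_j\Delta_j^{(+)}$ as a martingale (it is not, since $\Delta_j^{(c)}$ has been split off) and invoking ``monotonicity of $\sum_{j\le n}|\Delta_j^{(+)}|$,'' which only makes sense for the non-negative absolute-value sum. De Acosta's own proof of his Lemma 2.3 is a tailored i.i.d.\ truncation argument, not a shell-by-shell Doob estimate, and it is not something you can expect to reconstruct from the $L^2$-martingale toolkit alone. The economical route is the paper's: pass to absolute values, reduce to the i.i.d.\ non-negative sum $\sum_{j\le n}|b_j|\mathbf 1_{|b_j|>T_j}$, and cite de Acosta's Lemma 2.3; your Borel--Cantelli removal at level $a_j$ and your Ces\`aro estimate for the centering term can be retained, but they do not by themselves close the random part.
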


\begin{proof} 
Using the inequality $|\sigma(g,x)|\leq |g|$ and the zero drift assumption, we can bound  the difference  
\begin{align}  \label{GLIL-up-approx}
|\sigma(b_{n}\dots b_{1},x) -S_{n}(b,x)|\leq \sum_{j=1}^n |b_{j}| {\bf 1}_{|b_{j}|> \alpha \sqrt{\frac{j}{\LL j}}} +  \sum_{j=1}^n\E( |g| {\bf 1}_{|g|> \alpha \sqrt{\frac{j}{\LL j}}})
\end{align} 
The second moment assumption and \cite[Lemma 2.3]{deAcosta83} imply that   the right-hand side is $\beta$-almost surely of the form $o(a_{n})$. 
\end{proof}

\bigskip

 We now play on   the variance parameter $\delta$ and the truncation parameter $\alpha$ to prove the upper bound \eqref{GLIL-up-1.0}. 

\begin{lemma}\label{GLIL-up-4}
We can choose the parameters $\delta, \alpha>0$ so that for some $c>1$, all $n\geq 1$,
$$\sup_{x\in X} \mathbb{P}_{x}(S_{n}\geq r a_{n}) = O(\exp(-c \LL n))$$
\end{lemma}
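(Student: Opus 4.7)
The plan is to apply an exponential Markov bound of Bennett type to the martingale $S_n$. By the preceding reductions, each increment $X^{(j)}(b_j, b_{j-1}\cdots b_1 x)$ is centered conditionally on $\mathcal{B}_{j-1}$, bounded in absolute value by $M_n := 2\alpha\sqrt{n/\LL n}$ (since $j \leq n$), and has conditional variance at most $1+\delta$ thanks to \eqref{GLIL-varpar}. This is precisely the regime where Bennett's inequality delivers a Gaussian-like upper tail at the scale $a_n = \sqrt{2n\LL n}$.

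Concretely, for a parameter $\lambda > 0$ to be optimized, I would write $\mathbb{P}_x(S_n \geq r a_n) \leq e^{-\lambda r a_n}\,\mathbb{E}_x[e^{\lambda S_n}]$ and estimate each conditional moment generating function by Bennett's inequality,
$$\mathbb{E}_x[e^{\lambda X^{(j)}} \mid \mathcal{B}_{j-1}] \leq \exp\left((1+\delta)\,\frac{e^{\lambda M_n} - 1 - \lambda M_n}{M_n^2}\right),$$
then iterate via the tower property to obtain $\mathbb{E}_x[e^{\lambda S_n}] \leq \exp\bigl(\tfrac{n\lambda^2}{2}(1+\delta)\eta(\lambda M_n)\bigr)$, where $\eta(u) := 2(e^u - 1 - u)/u^2$ tends to $1$ as $u\to 0^+$. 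The natural choice $\lambda := r a_n/(n(1+\delta))$ gives $\lambda M_n = 2\sqrt{2}\,r\alpha/(1+\delta)$, a constant independent of $n$ that is arbitrarily small when $\alpha$ is small; using $a_n^2 = 2n\LL n$, the bound becomes
$$\mathbb{P}_x(S_n \geq r a_n) \leq \exp\left(-\frac{r^2}{(1+\delta)\,\eta(\lambda M_n)}\,\LL n\right),$$
uniformly in $x \in X$.

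To conclude, the exponent constant $c := r^2/\bigl((1+\delta)\eta(\lambda M_n)\bigr)$ tends to $r^2 > 1$ as $\delta, \alpha \to 0^+$, so one first fixes $\delta$ small (freezing the variance reduction of the preceding lemma) and then picks $\alpha$ small enough that $c > 1$. The only (minor) obstacle is bookkeeping these two constants: since $\delta$ and $\alpha$ are tuned independently and each contributes a multiplicative factor tending to $1$ in the exponent, no circularity arises. This is essentially the same two-parameter trick underlying de~Acosta's LIL proof under only a second moment assumption.
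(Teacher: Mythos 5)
Your proof is correct and takes essentially the same route as the paper: an exponential Markov (Chernoff) bound on $S_n$, estimating the conditional MGF of each truncated, recentered increment via an elementary exponential inequality (the paper uses $e^s\le 1+s+\tfrac{s^2}{2}e^{|s|}$, you invoke Bennett, which is the same thing packaged as $\eta(u)=2(e^u-1-u)/u^2\to 1$), iterating through the filtration, and then choosing $\lambda\asymp\sqrt{\LL n/n}$ so that $\lambda M_n$ is a small constant governed by $\alpha$. One small slip: with your specific choice $\lambda=ra_n/(n(1+\delta))$, the exponent evaluates to $-\tfrac{r^2(2-\eta(\lambda M_n))}{1+\delta}\LL n$ rather than $-\tfrac{r^2}{(1+\delta)\eta(\lambda M_n)}\LL n$ (the latter is what the near-optimal $\lambda$ would give); since $2-\eta\to 1$ and $1/\eta\to 1$ as $\alpha\to 0^+$, this does not affect the conclusion that $c\to r^2>1$.
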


\begin{proof}
Using the relation $e^s\leq 1+s+\frac{s^2}{2}e^{|s|}$ for any $s\in \R$ and the fact that $X^{(j)}$ has zero average,   we get for $t>0$, $x\in X$,  
$$\E_{x}(e^{t X^{(j)}}) \leq 1 + \frac{t^2}{2} \E_{x}(|{X^{(j)}}|^2) \exp(t\alpha \frac{j}{\sqrt{\LL j}})$$
The relation \eqref{GLIL-varpar} allows to bound the second moment of $X^{(j)}$: for $j$ large enough (say $j\geq j_{0}$), 
$$\E_{x}(|{X^{(j)}}|^2)\leq \int_{G}\sigma(g,x)^2 \,d\mu(g) + \delta \leq 1+2\delta := a^2$$
hence
$$\E_{x}(e^{t X^{(j)}}) \leq \exp\left(\frac{t^2a^2}{2} \exp(t\alpha  \frac{j}{\sqrt{\LL j}})\right) $$
We can now bound  the exponential moments of $S_{n}$. For $n\geq 1$, $t>0$, using that 
$$\E_{x}(e^{t X^{(j)}(b_{j},b_{j-1}\dots b_{1}x)}\,|\,\mathcal{B}_{j-1} )\leq  \sup_{y\in X} \E_{y}(e^{t X^{(j)}}),$$ we get
\begin{align*}  
\E_{x}(e^{t S_{n}}) &\leq \prod_{j=1}^{n} \sup_{y\in X}\E_{y}(e^{t X^{(j)}})\\
&\leq \exp\left(\frac{t^2a^2}{2} \sum_{j=1}^n\exp(t\alpha  \frac{j}{\sqrt{\LL j}})\right) 
\end{align*}  
So by Chebychev inequality
\begin{align*}  
\mathbb{P}_{x}(S_{n}>r a_{n}) \leq \exp(-tra_{n} +\frac{t^2a^2}{2} \sum_{j=1}^n\exp(t\alpha  \frac{j}{\sqrt{\LL j}}) )
\end{align*} 
Setting $t= 2ra^{-2}\sqrt{2(\LL n)/n}$, a direct computation yields 
\begin{align*}  
\mathbb{P}_{x}(S_{n}>r a_{n}) \leq \exp(- (r/a)^2 (2 - e^{r a^{-2}\alpha})\LL n)
\end{align*} 
As $\delta, \alpha \to 0^+$, we have $c:=(r/a)^2 (2 - e^{r a^{-2}\alpha})\rightarrow r^2 >1$. This concludes the proof. 
\end{proof}

\bigskip

The rest of the proof is  dedicated to showing that for $\delta, \alpha$ as in  \Cref{GLIL-up-4}, we have the upper bound \eqref{GLIL-up-1.0}.

\bigskip
\begin{lemma}\label{GLIL-up-2}
For all $\eps>0$, there exists $n_{0}\geq 0$ such that for $n\geq n_{0}$, $x\in X$, 
$$\min_{0\leq k \leq n}\mathbb{P}_{x}(|S_{n} -S_{k}|\leq \eps a_{n}\,|\,\mathcal{B}_{k})\geq 1/2 $$
\end{lemma}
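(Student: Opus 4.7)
The plan is to exploit the martingale difference structure of $(X^{(j)}(b_{j},b_{j-1}\dots b_{1}x))_{j\geq 1}$ with respect to $(\mathcal{B}_{j})_{j\geq1}$, and then conclude via the conditional Chebyshev inequality. That the sequence is indeed a martingale difference follows directly from the definition of $X^{(j)}$ as the recentering of $X^{(j)}_{1}$, which ensures $\int_{G}X^{(j)}(g,y)\,d\mu(g)=0$ for every $y\in X$, hence $\E(X^{(j)}(b_{j},b_{j-1}\dots b_{1}x)\,|\,\mathcal{B}_{j-1})=0$ $\beta$-a.s.

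The key input is a uniform second-moment bound. Since both truncation and recentering decrease $L^2$-norm, the variance assumption \eqref{GLIL-varpar} yields
$$\E_{y}(|X^{(j)}|^2)\,\leq\,\int_{G}|X^{(j)}_{1}(g,y)|^2\,d\mu(g)\,\leq\,\int_{G}\sigma(g,y)^2\,d\mu(g)\,\leq\,1+\delta$$
for every $y\in X$ and every $j\geq 1$. By orthogonality of the martingale increments, the conditional variance of $S_{n}-S_{k}$ then telescopes into a sum of at most $n$ such terms:
$$\E_{x}(|S_{n}-S_{k}|^2\,|\,\mathcal{B}_{k})\,=\,\sum_{j=k+1}^{n}\E(|X^{(j)}(b_{j},b_{j-1}\dots b_{1}x)|^2\,|\,\mathcal{B}_{k})\,\leq\,n(1+\delta).$$

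Applying the conditional Chebyshev inequality then produces
$$\mathbb{P}_{x}(|S_{n}-S_{k}|>\eps a_{n}\,|\,\mathcal{B}_{k})\,\leq\,\frac{n(1+\delta)}{\eps^2 a_{n}^2}\,=\,\frac{1+\delta}{2\eps^2\LL n},$$
which tends to $0$ as $n\to\infty$, uniformly in $0\leq k\leq n$ and $x\in X$. Choosing $n_{0}$ so that this upper bound is less than $1/2$ for all $n\geq n_{0}$ closes the argument. I do not anticipate a substantive obstacle here: the computation is a routine application of orthogonality of martingale differences, and the main conceptual point is simply that truncation followed by recentering preserves the martingale-difference structure, so the second-moment splitting applies without any extra work.
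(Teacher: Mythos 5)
Your proof is correct, but it takes a genuinely different route from the paper's. The paper reduces the claim back to the untruncated cocycle $\sigma$: it uses the pointwise comparison $|S_n - S_k| \leq |\sigma(b_n\cdots b_{k+1}, b_k\cdots b_1 x)| + 2r_n(b)$ together with the a.s.\ estimate $r_n(b) = o(a_n)$ from \Cref{GLIL-up-1}, then invokes the CLT for $\sigma$ (for moderately large $l := n-k$) and the uniform $L^2$ bound (for small $l$) to control $\sigma(b_l\cdots b_1, y)$. You bypass that reduction entirely: the increments $X^{(j)}(b_j, b_{j-1}\cdots b_1 x)$ form a martingale difference sequence for $(\mathcal{B}_j)$, truncation and recentering do not increase the conditional second moment, so orthogonality gives $\E_x(|S_n-S_k|^2\,|\,\mathcal{B}_k) \leq n(1+\delta)$ uniformly, and conditional Chebyshev finishes. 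Your argument is more elementary (no appeal to the CLT), gives a quantitative rate $\frac{1+\delta}{2\eps^2\LL n}$ rather than just a $1/2$ threshold, and the case split on $l$ disappears. The paper's approach has the merit of reusing machinery that is already in play, but yours is cleaner and self-contained. One small wording note: what you actually get is $(n-k)(1+\delta)$, which you then bound by $n(1+\delta)$; worth saying explicitly so the appearance of $n$ rather than $n-k$ is clearly a relaxation, not an error.
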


\begin{proof}
Call $r_{n}(b)$ the right-hand side of \eqref{GLIL-up-approx}. We have 
$$|S_{n} -S_{k}| \leq |\sigma(b_{n}\dots b_{k+1}, b_{k}\dots b_{1}x)|+2r_{n}(b)$$
As $r_{n}(b)=o(a_{n})$ almost-surely,  we only need to check that for large $n$, for all $y\in X$, 
$$\min_{0\leq l \leq n}\beta (|\sigma(b_{l}\dots b_{1}, y) |\leq \frac{\eps}{2} a_{n} )\geq 2/3 $$
For $l$ greater than some $n_{0}'$, this follows from the CLT assumption. For $l\leq n_{0}'$, this follows from the $L^2$-domination assumption.  
\end{proof}

\bigskip
Set $S_{n}^*=\max_{k\leq n}S_{k}$.

\begin{lemma}\label{GLIL-up-3}
For all $c,\eps>0$,   for $n\geq n_{0}$, $x\in X$, 
$$\mathbb{P}_{x}(S_{n}^* \geq (c+\eps)a_{n})\leq 2 \mathbb{P}_{x}(|S_{n}| \geq ca_{n})$$
\end{lemma}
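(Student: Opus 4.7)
The plan is a standard Lévy--Ottaviani--Skorohod maximal inequality, using Lemma \ref{GLIL-up-2} as the anti-concentration input that replaces Kolmogorov's identical distribution argument. I would introduce the stopping time
$$\tau(b) = \inf\{\, k \leq n : S_{k}(b,x) \geq (c+\eps)a_{n} \,\}$$
(with $\tau=\infty$ if no such $k$ exists), which is adapted to $(\mathcal{B}_{k})_{k\geq 1}$, and decompose the event $\{S_{n}^{*} \geq (c+\eps)a_{n}\}$ as the disjoint union $\bigsqcup_{k=1}^{n} \{\tau=k\}$.

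Then I would observe that on $\{\tau=k\} \cap \{|S_{n}-S_{k}| \leq \eps a_{n}\}$, we have $S_{n} \geq S_{k} - \eps a_{n} \geq c a_{n}$, hence $|S_{n}|\geq c a_{n}$. Since $\{\tau=k\}$ is $\mathcal{B}_{k}$-measurable, Lemma \ref{GLIL-up-2} applied for $n\geq n_{0}$ gives
$$\mathbb{P}_{x}\bigl(\{\tau=k\} \cap \{|S_{n}-S_{k}|\leq \eps a_{n}\}\bigr) = \mathbb{E}_{x}\bigl[\mathbf{1}_{\{\tau=k\}} \,\mathbb{P}_{x}(|S_{n}-S_{k}|\leq \eps a_{n}\,|\,\mathcal{B}_{k})\bigr] \geq \tfrac{1}{2}\,\mathbb{P}_{x}(\tau=k).$$
Summing over $k$ from $1$ to $n$ yields
$$\mathbb{P}_{x}(|S_{n}|\geq c a_{n}) \;\geq\; \tfrac{1}{2}\sum_{k=1}^{n}\mathbb{P}_{x}(\tau=k) \;=\; \tfrac{1}{2}\,\mathbb{P}_{x}(S_{n}^{*}\geq (c+\eps)a_{n}),$$
which is the claimed inequality after multiplying by $2$.

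The only mildly delicate point—and arguably the ``hard part'' of this step—is that the conditional probability in Lemma \ref{GLIL-up-2} must be uniform in the initial point; this is why that lemma was stated uniformly in $y\in X$ and why the bound $1/2$ is applied at the random position $b_{k}\dots b_{1} x$ rather than at $x$ itself. Beyond this, the argument is purely formal and requires no additional moment or regularity assumption on $\sigma$ or $\mu$.
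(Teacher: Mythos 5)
Your proof is correct and takes essentially the same route as the paper's: the paper's one-line argument asserts $\mathbb{P}_{x}\bigl(S_{n}\geq ca_{n}\mid S_{n}^{*}\geq(c+\eps)a_{n}\bigr)\geq 1/2$ and cites \Cref{GLIL-up-2}, and your decomposition by the first crossing time $\tau$ is precisely the standard L\'evy--Ottaviani argument that substantiates that claim, with the uniform-in-$X$ conditional bound from \Cref{GLIL-up-2} playing the role normally filled by the i.i.d.\ structure.
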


\begin{proof}
It is enough to check that 
$$\mathbb{P}_{x}( S_{n} \geq ca_{n}  \,\,|\,\,S_{n}^* \geq (c+\eps)a_{n})\geq \frac{1}{2}$$
This follows from \Cref{GLIL-up-2}.
\end{proof}

\bigskip

\begin{lemma}\label{GLIL-up-5}
Choose $\delta, \alpha>0$ as in \Cref{GLIL-up-4}. For all $x\in X$, for $\beta$-almost every $b\in B$, 
\begin{align*} 
\limsup \frac{1}{a_{n}}S_{n}(b,x) \leq r^3
\end{align*} 
\end{lemma}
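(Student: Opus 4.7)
My plan is to establish the bound $\limsup S_n/a_n \leq r^3$ by the classical Borel--Cantelli-on-a-geometric-subsequence argument, with the two key inputs being the sub-polynomial tail estimate of Lemma \ref{GLIL-up-4} and the maximal inequality of Lemma \ref{GLIL-up-3}.

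The first step is to package the two lemmas into a single tail bound for the running maximum $S_n^*$. Since $-\sigma$ is also a centered cocycle dominated by $|g|$, Lemma \ref{GLIL-up-4} applies to $-\sigma$ as well (with the same $\delta,\alpha$), yielding the two-sided estimate
$$\sup_{x\in X}\mathbb{P}_x(|S_n|\geq r\,a_n) \;=\; O\bigl((\log n)^{-c}\bigr)$$
for some $c>1$. Feeding this into Lemma \ref{GLIL-up-3} with the choice of threshold $r$ and slack $\eps = r(r-1)>0$ (so that $r+\eps = r^2$) gives, for $n \geq n_0$ and all $x\in X$,
$$\mathbb{P}_x\bigl(S_n^* \geq r^2 a_n\bigr) \;\leq\; 2\,\mathbb{P}_x\bigl(|S_n|\geq r\,a_n\bigr) \;=\; O\bigl((\log n)^{-c}\bigr).$$

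Next I would introduce the geometric subsequence $n_k = \lceil r^{2k}\rceil$, so that $\log n_k \sim 2k\log r$ and the tail bound at $n=n_k$ is $O(k^{-c})$ with $c>1$, hence summable. Borel--Cantelli then guarantees that for $\beta$-a.e.\ $b\in B$, the event $\{S_{n_k}^*(b,x)\geq r^2 a_{n_k}\}$ occurs only finitely often, so $\limsup_k S_{n_k}^*/a_{n_k} \leq r^2$ almost surely. The interpolation step is then straightforward: for $n\in[n_k,n_{k+1}]$, monotonicity of $a_n$ and $n\mapsto S^*_n$ yields
$$\frac{S_n(b,x)}{a_n} \;\leq\; \frac{S^*_{n_{k+1}}(b,x)}{a_{n_k}} \;=\; \frac{S^*_{n_{k+1}}(b,x)}{a_{n_{k+1}}}\cdot \frac{a_{n_{k+1}}}{a_{n_k}},$$
and since $n_{k+1}/n_k \to r^2$, the ratio $a_{n_{k+1}}/a_{n_k}$ converges to $r$. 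Combining, $\limsup_n S_n/a_n \leq r^2\cdot r = r^3$ $\beta$-almost surely, as required.

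I do not expect a serious obstacle here: the subpolynomial tail from Lemma \ref{GLIL-up-4} is already the crux of the argument (it is where the truncation parameter $\alpha$ is used to extract a constant $c>1$ in the exponent), and everything else is classical. The only minor care needed is the symmetrization remark in the first paragraph (to pass from the one-sided tail of Lemma \ref{GLIL-up-4} to the two-sided one required by Lemma \ref{GLIL-up-3}), and the choice of geometric ratio $r^2$, which is precisely what forces the $r^3$ (rather than some smaller power) in the final bound, and is in any case harmless since one eventually lets $r\to 1^+$.
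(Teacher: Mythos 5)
Your proof is correct and follows essentially the same route as the paper's: a geometric blocking argument combining the maximal inequality of \Cref{GLIL-up-3}, the subpolynomial tail bound of \Cref{GLIL-up-4}, Borel--Cantelli along the geometric subsequence, and an interpolation step to pass from the subsequence to all $n$. The only cosmetic differences are your choice of geometric ratio ($n_k=\lceil r^{2k}\rceil$, where the paper takes $k_n=\lfloor s^n\rfloor$ for arbitrary $s\in(1,r)$) and your explicit symmetrization via $-\sigma$ to obtain the two-sided tail required by the statement of \Cref{GLIL-up-3}; both are harmless and lead to the same final factor $r^3$.
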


\begin{proof}

 Fix some  $s\in (1, r)$ and set $k_{n} =\lfloor s^n\rfloor$. Writing \emph{i.o.} for \emph{infinitely often}, we have  
\begin{align*} 
\mathbb{P}_{x}(S_{n} \geq r^3 a_{n} \,\, i.o.\,)&\leq \mathbb{P}_{x}(S^*_{k_{n}} \geq r^3 a_{k_{n-1}}  \,\,i.o. \,)\\
&\leq \mathbb{P}_{x}(S^*_{k_{n}}  \geq r^2 a_{k_{n}}  \,\,i.o. \,)
\end{align*} 
where the second inequality uses that $r k_{n-1}\geq k_{n}$ for $n$ large. Let us check that the right-hand side is $0$. Using \Cref{GLIL-up-3} and Borel Cantelli Lemma, we only need to check that 
$$\sum \mathbb{P}_{x}(S_{k_{n}}  \geq r a_{k_{n}})<\infty$$
By \Cref{GLIL-up-3}, 
$$\mathbb{P}_{x}(S_{k_{n}}  \geq r a_{k_{n}})\leq \exp( -c \,\LL k_{n}) \ll n^{-c} $$
whence the finitess. This shows that  
$$\mathbb{P}_{x}(S_{n} \geq r^3 a_{n} \,\, i.o.\,)=0$$
\end{proof}

\noindent{\bf Conclusion of the proof.} We only need to check that \Cref{GLIL-up-1.0} is satisfied. This follows from \Cref{GLIL-up-5} and \Cref{GLIL-up-1}.

\subsection*{The lower bound}

The proof of the reverse inclusion in the LIL follows verbatim the lines of the book \cite[Section 12.4]{BQRW} given in the context of $\mu$-contracting cocycles with finite exponential moments. We choose not to repeat it.

\subsection{Principle of large deviations}

We show the principle of large deviations for  cocycles. The proof is  classic but also very short, so we include it for completeness.

\begin{prop} \label{general-PLD}
Assume $\mu$ has finite exponential moment: $\int_{G}e^{\alpha |g|}\,d\mu(g)<\infty$ for some $\alpha>0$, and $\sigma$ has constant zero drift.  Then for all $\eps>0$ there exists $D, \delta>0$ such that  for any $n\geq 1$,  $x\in X$,  
$$ \beta(b\,:\, \|\sigma(b_{n}\dots b_{1}, x) \|> \eps n ) \leq  D e^{-\delta n}$$
\end{prop}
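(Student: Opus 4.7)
The plan is to run the standard Chernoff-type argument, exploiting that the zero-drift assumption makes $\sigma(b_n\cdots b_1,x)$ a sum of martingale differences and that the domination $|\sigma(g,y)|\leq |g|$ is uniform in $y\in X$.

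First, by bounding $\|\cdot\|$ through a finite $\eps/2$-net of unit vectors on the sphere $S^{d-1}$ and a union bound, it is enough to establish: for every unit vector $v\in\R^d$ and every $\eps>0$, there exist $D,\delta>0$ (uniform in $x\in X$, $n\geq 1$) such that
\[
\beta\bigl(b : \langle v,\sigma(b_n\cdots b_1,x)\rangle > \eps n\bigr)\leq De^{-\delta n}.
\]
Next, fix $x\in X$ and set $X_k(b)=\sigma(b_k,\,b_{k-1}\cdots b_1 x)$ for $k\geq 1$. By the cocycle relation one has $\sigma(b_n\cdots b_1,x)=\sum_{k=1}^n X_k(b)$, and the zero-drift assumption $\int\sigma(g,y)\,d\mu(g)=0$ (valid for every $y\in X$) gives $\E(X_k\mid \mathcal{B}_{k-1})=0$, so the $X_k$ are martingale increments with $|X_k|\leq |b_k|$.

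The key step is a uniform exponential-moment bound. Pick $\alpha>0$ with $M(\alpha):=\int_G e^{\alpha|g|}\,d\mu(g)<\infty$. Using the inequality $e^s\leq 1+s+\tfrac12 s^2 e^{|s|}$ and $\E_y(X_1)=0$, we get for every $t\in(0,\alpha]$ and every $y\in X$
\[
\E_y\!\left(e^{t\langle v,X_1\rangle}\right)\leq 1+\frac{t^2}{2}\int_G |g|^2 e^{t|g|}\,d\mu(g)\leq 1+C t^2\leq e^{Ct^2},
\]
for some constant $C=C(\alpha,\mu)$ independent of $y$ and $v$ (since $|v|=1$). Conditioning successively on $\mathcal{B}_{n-1},\dots,\mathcal{B}_1$ yields
\[
\E_x\!\left(e^{t\langle v,\sigma(b_n\cdots b_1,x)\rangle}\right)\leq e^{Ct^2 n}.
\]
Chebyshev's inequality then gives $\beta(\langle v,\sigma\rangle>\eps n)\leq e^{-n(t\eps-Ct^2)}$ for every $t\in(0,\alpha]$; choosing $t=\min(\alpha,\eps/(2C))$ produces exponential decay $e^{-\delta n}$ with $\delta=\delta(\eps,\alpha,C)>0$, uniformly in $x\in X$ and $v\in S^{d-1}$. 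Combining with the finite net on the sphere concludes the proof.

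The only step that requires any care is the uniformity in $x\in X$ of the exponential-moment bound in the third paragraph; this is immediate here because $|X_1(g,y)|\leq |g|$ uniformly in $y$, so the domination by the $\mu$-integrable function $|g|^2 e^{t|g|}$ kills any dependence on the base point. No spatial-control assumption on $\sigma$ is needed for the PLD.
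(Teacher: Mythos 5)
Your proof is correct and follows essentially the same Chernoff/exponential-Markov approach as the paper: both reduce to the one-dimensional case, exploit the cocycle relation together with the zero-drift assumption and the elementary bound $e^s\leq 1+s+O(s^2 e^{|s|})$ to control the conditional Laplace transform by $e^{Ct^2}$ uniformly in the base point, iterate over the $n$ increments, and then optimize over $t$. The only differences are cosmetic — you phrase the iteration as successive conditioning on a martingale whereas the paper phrases it as Fubini plus a supremum over $X$, and you spell out the reduction to $d=1$ via a finite net rather than simply asserting it.
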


\begin{proof}
We can assume that $\sigma$ is $\R$-valued. Let $t\in (0, \alpha)$ be a parameter, $n\geq 1$, $x\in X$. Using the cocycle relation and Fubini Theorem, 
 \begin{align*}
  \beta(b\,:\, \sigma(b_{n}\dots b_{1}, x) > \eps n ) &\leq e^{- t \eps n} \int_{B}e^{t \sigma(b_{n}\dots b_{1}, x)} d\beta(b)\\
  &\leq e^{- t \eps n} \left(\sup_{y\in X}\int_{G}e^{t \sigma(g, y)} d\mu(g)\right)^n
 \end{align*}
Using the bound $e^s\leq 1+s+s^{2}e^{|s|}$ for any $s\in \R$, we get  for $y\in X$,
 $$\int_{G}e^{t \sigma(g, y)} d\mu(g) \leq 1 + t^2 C \leq \exp(t^2 C )$$
 where $C= \E_{\mu}(|g|^2e^{t |g| })$ is finite. Combining inequalities, we get
 \begin{align*}
  \beta(b\,:\, \sigma(b_{n}\dots b_{1}, x) > \eps n ) \leq \exp(  - ( \eps- t C) t n)
 \end{align*}
Choosing $t<\eps/C$, we have an exponential bound independent of $x$. Applying the argument to $-\sigma$, we ge the desired bound for $|\sigma(b_{n}\dots b_{1}, x)|$.  
\end{proof}

\subsection{Gambler's ruin}

We show grambler's ruin estimates for  cocycles. 

\begin{prop} \label{general-GR}
Assume $\mu$ has finite exponential moment,  $\sigma$ has constant zero drift and is $(\mu,X)$-controlled,  $d=1$, and the limit variance $A$ in the CLT (\ref{general-CLT}) is non-zero.  Fix $k,l>0$ and sequences $(k_{s})_{s\geq 0},(l_{s})_{s\geq 0} \in \R^\N$ such that $k_{s}\sim sk$, $l_{s}\sim sl$. Then for every $x\in X$, 
$$\beta \left\{b : (\sigma(b_{n}\dots b_{1}, x))_{n\geq 1} \text{ meets  $[l_{s}, +\infty)$ before $(-\infty, -k_{s}]$}   \right\} \,\underset{s\to+\infty}{\longrightarrow}\, \frac{k}{k+l}$$
\end{prop}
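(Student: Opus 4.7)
The plan is the classical gambler's ruin argument via optional stopping, adapted to cocycles. Fix $x \in X$ and set $M_n(b) = \sigma(b_n \cdots b_1, x)$. The cocycle relation combined with the zero-drift hypothesis makes $(M_n)$ a martingale with respect to $(\mathcal{B}_n)$, since $\E[M_{n+1} - M_n \mid \mathcal{B}_n] = \E[\sigma(b_{n+1}, b_n\cdots b_1.x) \mid \mathcal{B}_n] = 0$. Let $T_s = \inf\{n \geq 1 : M_n \notin (-k_s, l_s)\}$ and $p_s := \beta(M_{T_s} \geq l_s)$. The goal is $p_s \to k/(k+l)$.

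First I would control the tail of $T_s$. Since the limit variance $A > 0$, the uniform-in-$x$ CLT from \Cref{general-CLT} yields a constant $C>0$ such that $\inf_{y \in X} \beta(|M_{Cs^2}(b)| \geq 3(k+l)s) \geq 1/2$ for every sufficiently large $s$, with $M$ started at any base point $y$. Since the interval $(-k_s, l_s)$ has length less than $3(k+l)s$ for large $s$, any such deviation on a block of length $Cs^2$ forces exit. Combined with the Markov structure of the walk $y_n = b_n\cdots b_1.x$ and the uniformity in the starting base point, this iterates to $\beta(T_s > MCs^2) \leq 2^{-M}$, so $T_s$ is almost surely finite and $\beta(T_s > N) \leq De^{-cN/s^2}$ for $N \geq Cs^2$.

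Next I would control the overshoot. By construction $|M_{T_s} - M_{T_s - 1}| \leq |b_{T_s}| \leq \max_{j \leq T_s}|b_j|$. Setting $N = C's^2 \log s$ with $C'$ large, the exponential moment of $\mu$ gives
$$\beta\bigl(\max_{j \leq N}|b_j| \geq \eps s\bigr) \leq N \mu(|g| \geq \eps s) = O(s^2(\log s) e^{-\alpha \eps s}) = o(1),$$
and the previous step yields $\beta(T_s > N) \leq D s^{-cC'}$. Together these imply $\beta(|b_{T_s}| \geq \eps s) \to 0$ for every $\eps > 0$, so the overshoot is $o(s)$ with probability tending to one.

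Finally I apply optional stopping at the bounded time $T_s \wedge N$: $\E[M_{T_s \wedge N}] = 0$. Orthogonality of martingale increments gives $\E[M_N^2] \leq N \int |g|^2 d\mu$, so by Cauchy-Schwarz
$$\bigl|\E[M_N \mathbf{1}_{T_s > N}]\bigr| \leq \sqrt{\E[M_N^2]\,\beta(T_s > N)} = O(\sqrt{N}\, s^{-cC'/2}) = o(s)$$
once $C'$ is chosen large. On the event $\{T_s \leq N\} \cap \{|b_{T_s}| \leq \eps s\}$, the stopped value $M_{T_s}$ lies in $[l_s, l_s + \eps s]$ when exit is through the top and in $[-k_s - \eps s, -k_s]$ when exit is through the bottom. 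Splitting $\E[M_{T_s \wedge N}] = 0$ accordingly and letting $\eps \to 0^+$ rearranges to $p_s(k_s + l_s) = k_s + o(s)$, whence $p_s \to k/(k+l)$ by the asymptotics $k_s \sim sk$, $l_s \sim sl$. The main technical obstacle is the exponential tail estimate for $T_s$, which crucially relies on uniformity in the base point in \Cref{general-CLT} so that the block estimate can be iterated Markov-style; once that is in place, the overshoot bound is routine from the exponential moment, and the conclusion is a standard optional stopping computation.
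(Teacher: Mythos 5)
Your proof is correct and follows the same high-level strategy as the paper's: make $M_n = \sigma(b_n\cdots b_1,x)$ a zero-mean martingale, stop it at a bounded cap of the exit time, control the overshoot via the exponential moment, and control the "not yet exited" contribution via the CLT. The differences are in the technical execution. The paper caps the exit time at $s^3$, applies optional stopping there, and controls the "still inside the window" term directly by one application of the CLT at time $s^3$: since $M_{s^3}/s^{3/2}$ is approximately Gaussian and $(-k_s,l_s)$ scaled by $s^{-3/2}$ shrinks to a point, $\beta(M_{s^3}\in[-k_s,l_s])\to 0$; it controls the overshoot in $L^1$ by computing $\E(\sup_{k\leq s^3}|M_{k+1}-M_k|) = \int_0^\infty \bigl(1-(1-e^{-\alpha t})^{s^3}\bigr)\,dt = o(s)$. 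You instead take the cap $N=C's^2\log s$, derive a genuine exponential tail $\beta(T_s>N)\lesssim e^{-cN/s^2}$ by iterating a block argument (uniform CLT at scale $Cs^2$ plus the Markov structure), and then bound the truncation error $\E[M_N\mathbf{1}_{T_s>N}]$ by Cauchy--Schwarz using the martingale second moment. Both routes close; the paper's is more economical, while yours yields a stronger quantitative exit-time estimate as a byproduct. One small point you elide at the end: on the event $\{T_s\le N,\ |b_{T_s}|>\eps s\}$ the stopped value $M_{T_s}$ is not in the window $[l_s,l_s+\eps s]$ (resp.\ $[-k_s-\eps s,-k_s]$), and its contribution to $\E[M_{T_s\wedge N}]$ must be shown to be $o(s)$; this follows from $|M_{T_s}|\le (k_s\vee l_s)+\max_{j\le N}|b_j|$ together with $\beta(|b_{T_s}|>\eps s)\to 0$ and $\E[\max_{j\le N}|b_j|]=O(\log s)$, but it deserves a line in the write-up.
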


\bigskip

For the proof, we  fix $x\in X$ and write $M_{n}(b)=\sigma(b_{n}\dots b_{1}, x)$ the associated martingale.

We will need to know that $M_{n}$ does leave the window $(-k_{s}, l_{s})$, and the exit time is bounded polynomially in $s$ with high probability.

\begin{lemma} \label{exit} 
$$\beta\{b: (M_{n}(b))_{n\geq1} \text{ leaves $(-k_{s},l_{s})$ for some $n\leq s^3$}\}\underset{s\to +\infty}{\longrightarrow} 1$$
\end{lemma}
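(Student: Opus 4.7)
The plan is to exploit the mismatch between the time horizon $s^3$ and the size of the window $[-k_s, l_s]$: the window has width of order $s$, but by the CLT the martingale $M_n = \sigma(b_n\dots b_1, x)$ has typical fluctuations of order $\sqrt{n}$, so at time $n = s^3$ its typical size is of order $s^{3/2} \gg s$. Consequently the path will overshoot the window with probability tending to $1$, and we do not need any clever exit-time analysis.

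Concretely, I would first note the trivial reduction: if $|M_{s^3}(b)| > \max(k_s, l_s)$, then $M_{s^3}(b) \notin [-k_s, l_s]$, and so the path $(M_n(b))_{n \geq 1}$ has already left $[-k_s, l_s]$ at some $n \leq s^3$. It therefore suffices to prove
\[
\beta\bigl\{b : |M_{s^3}(b)| > \max(k_s, l_s)\bigr\} \underset{s \to +\infty}{\longrightarrow} 1.
\]

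Next I would invoke Proposition \ref{general-CLT} applied at time $n = s^3$ (the hypotheses — $X$-controlled cocycle, finite second moment inherited from the exponential moment assumption, zero drift, $A > 0$ — are all in force). This gives, as $b$ varies with law $\beta$, the convergence in distribution
\[
\frac{M_{s^3}(b)}{s^{3/2}} \xrightarrow[s \to +\infty]{} \mathscr{N}(0, A).
\]
Since $k_s \sim sk$ and $l_s \sim sl$, we have $\max(k_s, l_s) = O(s) = o(s^{3/2})$. So for any fixed $\eps > 0$, there exists $s_0$ such that $\max(k_s, l_s) \leq \eps s^{3/2}$ whenever $s \geq s_0$. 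This yields
\[
\liminf_{s \to +\infty} \beta\bigl\{|M_{s^3}| > \max(k_s, l_s)\bigr\} \geq \liminf_{s \to +\infty} \beta\bigl\{|M_{s^3}| \geq \eps s^{3/2}\bigr\} = \mathbb{P}\bigl(|\mathscr{N}(0,A)| \geq \eps\bigr).
\]
Letting $\eps \to 0^+$ and using $A > 0$, the right-hand side tends to $1$, which concludes.

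There is no real obstacle: the argument is a one-line consequence of the CLT combined with the scale separation $s \ll s^{3/2}$. The only mild point to verify is that the CLT truly applies to the variable $M_{s^3}$ with $x$ fixed (it does, by the uniform-in-$x$ statement of Proposition \ref{general-CLT}), and that the non-degeneracy $A > 0$ gives a Gaussian with mass escaping to infinity — both of which are already in the hypotheses of Proposition \ref{general-GR}.
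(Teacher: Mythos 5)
Your proof is correct and takes essentially the same approach as the paper: both reduce to observing that $M_{\lfloor s^3\rfloor}$ is of order $s^{3/2}$ by the CLT (\Cref{general-CLT}), while the window $[-k_s,l_s]$ has width of order $s = o(s^{3/2})$, so $M_{\lfloor s^3\rfloor}$ lies outside the window with probability tending to $1$. The paper phrases it as $\beta\{M_{\lfloor s^3\rfloor}\in[-k_s,l_s]\}\to 0$; you phrase it as $\beta\{|M_{\lfloor s^3\rfloor}|>\max(k_s,l_s)\}\to 1$ — the same estimate.
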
 

\begin{proof}
\begin{align*}
\beta\{b: M_{\lfloor s^3\rfloor}(b) \in (-k_{s},l_{s})\}
&= \beta\{b: s^{-3/2}M_{\lfloor s^3\rfloor}(b) \in (-s^{-3/2}k_{s}, s^{-3/2}l_{s})\} \\
&\underset{s\to +\infty}{\longrightarrow} 0
\end{align*}
by the CLT for $M_{n}$ proven in \Cref{general-CLT}.  
\end{proof}

\bigskip

\begin{proof}[Proof of \Cref{general-GR}]
Set 
$$\tau'_{s}:=\inf\{n\geq 1, \,(M_{n}(b))_{n\geq 1} \text{ meets  $[l_{s}, +\infty)$ or $(-\infty, -k_{s}]$}\}$$
and $\tau_{s}:=\min(\lfloor s^3 \rfloor, \tau'_{s})$.  By the optional sampling theorem applied to the zero-mean martingale $M_{n}$,  we have 
\begin{align*}
0 &=\E(M_{\tau_{s}})\\
&= \E(M_{\tau_{s}}{ \bf1}_{M_{\tau_{s}} \geq l_{s}})+\E(M_{\tau_{s}} { \bf1}_{M_{\tau_{s}} \leq -k_{s}})+\E(M_{\tau_{s}}{ \bf1}_{M_{\tau_{s}}\in (-k_{s}, l_{s})})
\end{align*}

Let us estimate each term individually.
\begin{align*} 
\E(M_{\tau_{s}}{ \bf1}_{M_{\tau_{s}} \geq l_{s}}) = l_{s}\beta(M_{\tau_{s}} \geq l_{s})+ \E((M_{\tau_{s}}-l_{s}){ \bf1}_{M_{\tau_{s}} \geq l_{s}}) 
\end{align*}
 \details{We show the overshoot is negligible compared to $s$. We first notice the upper bound
\begin{align} 
 \E((M_{\tau_{s}}-l_{s}){ \bf1}_{M_{\tau_{s}}  \nonumber
 \geq l_{s}})& \leq \E(\sup_{k\leq s^3}|M_{k+1}-M_{k}|)\\ \nonumber
 & =\int_{\R^+}  \beta(\sup_{k\leq s^3}|M_{k+1}-M_{k}| > t) \,dt\\ \nonumber
 & =\int_{\R^+} 1- \beta(\sup_{k\leq s^3}|M_{k+1}-M_{k}| \leq t) \,dt \\ 
 & =\int_{\R^+} 1- \beta(E_{s^3}(t)) \,dt  \label{ineq-ovs}
\end{align}
where for $r>0$, we write $E_{r}(t):=\{b\in B\,:\, \sup_{k\leq r}|M_{k+1}(b)-M_{k}(b)| \leq t\}$.  Arguing conditionally and observing that the cocycle relation gives  $M_{k+1}(b)-M_{k}(b)=\sigma(b_{k+1}, b_{k}\dots b_{1}x)$, we get 
\begin{align*} 
\beta(E_{s^3}(t)) 
= \beta(|M_{\lfloor s^3\rfloor+1}-M_{\lfloor s^3\rfloor}| \leq t \,|\, E_{s^3-1}(t)) \beta(E_{s^3-1}(t))
 \geq (1-e^{-\alpha t})  \beta(E_{s^3-1}(t))
\end{align*}
for some $\alpha=\alpha(\mu, |.|)$ coming from the exponential moment assumption on $\mu$. Iterating this inequality and plugging it into \eqref{ineq-ovs}, we get
\begin{align*} 
\E((M_{\tau_{s}}-l_{s}){ \bf1}_{M_{\tau_{s}} \geq l_{s}})
   \leq \int_{\R^+} 1- (1-e^{-\alpha t})^{s^3} \,dt
\end{align*}
 \Cref{integral-estimate} below tells us that $\sup_{s>0}\int_{[\eps s, +\infty]} 1- (1-e^{-\alpha t})^{s^3} \,dt <\infty$ for any $\eps>0$, whence we can finally control the overshoot by:
\begin{align*} 
\E((M_{\tau_{s}}-l_{s}){ \bf1}_{M_{\tau_{s}} \geq l_{s}})
 =o(s)
\end{align*}}
To sum up, we proved that 
\begin{align*} 
\E(M_{\tau_{s}}{ \bf1}_{M_{\tau_{s}} \geq l_{s}}) = l_{s}\beta(M_{\tau_{s}} \geq l_{s})+ o(s)
\end{align*}
Using either the same line of argument, or \Cref{exit}, we get as well that 
\begin{align*} 
&\E(M_{\tau_{s}}{ \bf1}_{M_{\tau_{s}} \leq -k_{s}}) = -k_{s}\beta(M_{\tau_{s}} \leq -k_{s})+ o(s)\\
& \details{|\E(M_{\tau_{s}}{ \bf1}_{M_{\tau_{s}}\in (-k_{s}, l_{s})})|\leq \max(k_{s}, l_{s}) \beta(M_{\tau_{s}}\in (-k_{s}, l_{s}))=o(s)}
\end{align*}
In conclusion, 
\begin{align*}
0 &=\E(M_{\tau_{s}})\\
&= l_{s}\beta(M_{\tau_{s}} \geq l_{s}) -k_{s}\beta(M_{\tau_{s}} \leq -k_{s}) +o(s)
\end{align*}
so as $s\to+\infty$, one has $\beta(M_{\tau_{s}}\geq l_{s})\rightarrow \frac{k}{k+l}$ as desired. \details{The result for $\tau'_{s}$ follows because $\beta(\tau_{s}\neq \tau'_{s})\to 0$ as $s\to+\infty$ by \Cref{exit}.}
\end{proof}

\details{We record the following estimate used in the proof  of \Cref{general-GR} above. 
\begin{lemma} \label{integral-estimate}
For all $\eps>0$, one has 
$$\sup_{s>0}\int_{[\eps s, +\infty]} 1- (1-e^{-\alpha t})^{s^3} \,dt <\infty.$$
\end{lemma}}

\begin{proof}
The integral over the larger domain $\int_{\R^+} 1- (1-e^{-\alpha t})^{s^3} \,dt$ is finite for every $s>0$ and increases with $s$. Hence, we only need to check the bound for $s$ large enough. Now rewrite uniformly for $t\in [\eps s, +\infty)$,  
$$(1-e^{-\alpha t})^{s^3}= \exp(s^3\log(1-e^{-\alpha t})) =\exp(s^3 (e^{-\alpha t} +o(e^{-\alpha t})  )) = 1+ s^3e^{-\alpha t} +o(s^3e^{-\alpha t} )$$
where $o(.)$ goes to zero as $s\to+\infty$. The result then follows because 
$$\sup_{s>0}\int_{[\eps s, +\infty]} s^3e^{-\alpha t} \,dt <\infty.$$

\end{proof}

\subsection{Exit estimates for drifting cocycles}

For this last section, we replace the zero mean condition on $\sigma$ by assuming a uniform lower bound on the drift: for all $x\in X$, 
\begin{align} \label{drift+}
\int_{G}\sigma(g,x)d\mu(g) \geq 1  
\end{align}

For $b\in B, x\in  X$,  we set
$$S_{n}(b,x)=\sigma(b_{n}\dots b_{1}, x) $$
and for $s>0$, we write
$$\tau_{s} (b,x)= \inf \{n \geq 1,\, S_{n}(b, x)\geq s\}$$

Foster's criterion (see \cite{Foster53} or \cite[Lemma 2.4]{BenardDeSaxce22}) implies that for any $x$, the exit time $\tau_{s}$ is finite, and even has bounded expectation $\E_{x}(\tau_{s})\leq s$. The goal of the section is to show that  the overshoot $S_{\tau_{s}}-s$ is essentially  bounded. Assuming equality in \eqref{drift+}, we also prove that $\tau_{s}$ lands roughly in a window of size $\sqrt{s}$ around $s$. \details{Throughout the section, given a formula $I(n)$  representing an inequality involving (among others) a parameter $n \in \N^*$, we will lighten notation by writing 
$$ \mathbb{P}_{x}(I(\tau_{s}))=\beta(b\in B\,:\, I(\tau_{s}(b,x)))$$}

\begin{prop} \label{general-tau}
\begin{enumerate}

\item  \emph{(Overshoot control)} \details{Assume that $\sigma$ satisfies \eqref{drift+}.}
$$\sup_{x\in X, s>0}\mathbb{P}_{x}(S_{\tau_{s}} >s +R ) \underset{R\to+\infty}{\longrightarrow} 0$$

\item  \emph{(Time control)} Assume $\mu$ has finite second moment,  and $\sigma$ is $(\mu,X)$-controlled and has constant drift equal to $1$. Then we have:  $\forall \eps>0$, $\exists R>0$, 
\details{$$\sup_{x\in X, s>1} \mathbb{P}_{x}(|\tau_{s} -s| > R\sqrt{s} ) \leq \eps$$}

\end{enumerate}
\end{prop}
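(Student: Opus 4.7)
For the overshoot control, I would proceed as follows. Since $|\sigma(g, x)| \leq |g|$, the overshoot satisfies $Z(s) := S_{\tau_s}(b, x) - s \leq |b_{\tau_s(b)}|$, so it suffices to bound $\mathbb{P}_x(|b_{\tau_s}| > R)$ uniformly in $x$ and $s$. I would decompose over the value of $\tau_s$: on the event $\{\tau_s = n,\, |b_n| = r\}$, the chain of inequalities $s \leq S_n \leq S_{n-1} + |b_n|$ forces $S_{n-1} \in [s - r, s)$. The event $\{\tau_s > n-1,\, S_{n-1} \geq s-r\}$ is $\mathcal{B}_{n-1}$-measurable and therefore independent of $b_n$, so summing over $n$ yields
$$\mathbb{P}_x(|b_{\tau_s}| > R) \;\leq\; \int_R^{+\infty} L_x(s, r)\, \mu(|g| \in dr), \quad \text{where } L_x(s, r) := \mathbb{E}_x\bigl[\#\{n < \tau_s : S_n \geq s - r\}\bigr].$$

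The key estimate is $L_x(s, r) \leq r$, uniformly in $x$ and $s$. Since $S_n < s - r$ for $n < \tau_{s-r}$, the local time is bounded by $\tau_s - \tau_{s-r}$; applying the strong Markov property at $\tau_{s-r}$, the drift-$\geq 1$ hypothesis together with Foster's criterion gives $\mathbb{E}_x[\tau_s - \tau_{s-r} \mid \mathcal{B}_{\tau_{s-r}}] \leq \max(0, s - S_{\tau_{s-r}}) \leq r$. Plugging this back yields $\mathbb{P}_x(|b_{\tau_s}| > R) \leq \int_R^{+\infty} r\, \mu(|g| \in dr) = \mathbb{E}_\mu[|g| \mathbf{1}_{|g| > R}]$, which tends to $0$ as $R \to \infty$ by the finite first moment hypothesis and dominated convergence.

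For the time control, the constant-drift-$1$ condition makes $M_n := S_n(b,x) - n$ a martingale, and the bound $|\sigma(g, x)| \leq |g|$ combined with finite second moment makes its orthogonal increments $L^2$-bounded uniformly in $x$ by $C := \int (|g| + 1)^2\, d\mu(g) < \infty$. Setting $N := \lceil s + R\sqrt{s}\rceil$, I would use the inclusion
$$\{|\tau_s - s| > R\sqrt{s}\} \;\subset\; \Bigl\{\max_{n \leq N} |M_n| \geq R\sqrt{s} - 1\Bigr\},$$
valid because $\tau_s > s + R\sqrt{s}$ forces $S_N < s$ and hence $M_N < -R\sqrt{s} + 1$, while $\tau_s < s - R\sqrt{s}$ forces $M_{\tau_s} = S_{\tau_s} - \tau_s \geq R\sqrt{s}$. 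Orthogonality of increments gives $\mathbb{E}_x[M_N^2] \leq CN$, and Doob's $L^2$-maximal inequality produces $\mathbb{P}_x(|\tau_s - s| > R\sqrt{s}) \leq 4CN/(R\sqrt{s} - 1)^2 \leq C'/R^2$ for all $s > 1$ and $R$ large, which is $\leq \varepsilon$ for $R$ sufficiently large.

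The main difficulty is the local time estimate in the overshoot part: the naive bound $L_x(s, r) \leq \mathbb{E}_x[\tau_s] \leq s$ fails to be uniform in $s$, and the crucial trick is the strong Markov reduction at $\tau_{s-r}$ so that Foster's inequality applies on an interval of length $r$ rather than $s$. Once this is in place, the remaining calculations are a routine combination of conditional independence and Doob's inequality.
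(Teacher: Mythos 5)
Your proposal is correct but takes a genuinely different route from the paper on both points. For the overshoot control, the paper proceeds by an induction on $s$ (Lemma \ref{taus-to-tau1}) that reduces to the single exit time $\tau_1$, then bounds $\sum_n \sup_x \Pb_x(S_{\tau_1}>n)$ using Foster's criterion (Lemma \ref{stoch-dom1}). You instead decompose directly over the exit step, noting $S_{\tau_s}-s\leq|b_{\tau_s}|$, and reduce to the local-time estimate $L_x(s,r)\leq r$; the key to this estimate is exactly your strong-Markov reduction at $\tau_{s-r}$, which is the right ladder-epoch trick — it applies Foster over a window of length $r$ rather than $s$. Both routes lean on the same bound $\E_x(\tau_u)\leq u$ from Foster's criterion, and both arrive at an $\E_\mu(|g|\mathbf{1}_{|g|>R})$-type tail; yours does so in one pass without induction. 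For the time control, the paper invokes the CLT for $S_n-n$ (Proposition \ref{general-CLT}), which needs the $X$-control hypothesis and the whole Section A.1 machinery; you instead observe that the drift-$1$ assumption makes $M_n=S_n-n$ a martingale with increments uniformly bounded in $L^2$, and apply Doob's maximal inequality. This is more elementary, yields a clean $O(1/R^2)$ bound uniform in $s$ and $x$, and in fact does not use the $X$-control hypothesis at all — a genuine simplification. The only blemish is a trivial off-by-one: take $N=\lfloor s+R\sqrt{s}\rfloor$ rather than the ceiling so that $\tau_s>s+R\sqrt{s}$ forces $\tau_s>N$ and hence $S_N<s$.
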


\noindent{\bf Remark}. The control of the overshoot  implies that, given any large enough interval $I$ in $\R^+$, the $\beta$-probability that the random sequence $(S_{n})_{n\geq 0}$ intersects $I$ is close to $1$. This can be interpreted as  a weak form of renewal estimate for $(S_{n})_{n\geq 0}$. 
\bigskip

Let us begin with the overshoot estimate. 
Our first step is to use the cocycle property of $S_{n}$ to reduce the overshoot control for $S_{\tau_{s}}$ to that of a stronger control for $S_{\tau_{1}}$. 

\begin{lemma} \label{taus-to-tau1}
For $R>0$, we have
\details{$$\sup_{x\in X, s>1}\mathbb{P}_{x}( S_{\tau_{s}} >s +R ) \,\, \leq \,\,\sum_{n\geq 1} \sup_{y\in X}\Pb_{y}( S_{\tau_{1}} >n +R )$$}
\end{lemma}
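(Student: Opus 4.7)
The natural approach is to iterate the strong Markov property at the successive first hitting times of level one from the current base point. Define $\tau^{(0)} := 0$ and, recursively, $\tau^{(j+1)} := \tau^{(j)} + \tau_1 \circ T^{\tau^{(j)}}$, where $T : B \to B$ is the shift on sequences. The increments $\Delta_j := S_{\tau^{(j)}} - S_{\tau^{(j-1)}}$ are $\geq 1$ by the definition of $\tau_1$, and by the strong Markov property at $\tau^{(j-1)}$ their conditional distribution given $\mathcal{B}_{\tau^{(j-1)}}$ coincides with the law of $S_{\tau_1}$ under $\mathbb{P}_{y_{j-1}}$, where $y_{j-1} := b_{\tau^{(j-1)}}\cdots b_1 x$. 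Writing $D_{j-1} := s - S_{\tau^{(j-1)}}$ for the remaining distance to level $s$ and $J := \min\{j \geq 1 : S_{\tau^{(j)}} \geq s\}$ (which is $\beta$-almost surely finite thanks to the positive drift and Foster's criterion recalled above), one has $\tau_s = \tau^{(J)}$ and the overshoot at level $s$ equals $\Delta_J - D_{J-1}$.

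Partitioning the event $\{S_{\tau_s} > s + R\}$ according to the value of $J$ (noting that on $\{D_{j-1} > 0\}$ the condition $\Delta_j > D_{j-1} + R$ automatically forces $J = j$ since $R > 0$), and then applying the strong Markov property at each $\tau^{(j-1)}$, one obtains the identity
\[
\mathbb{P}_x(S_{\tau_s} > s + R) \,=\, \sum_{j \geq 1} \mathbb{E}_x\!\left[\mathbf{1}_{D_{j-1} > 0}\,\mathbb{P}_{y_{j-1}}(S_{\tau_1} > D_{j-1} + R)\right].
\]

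The conclusion then rests on the key observation that, since $\Delta_j \geq 1$ for every $j$, the sequence $(D_{j-1})_{j \leq J}$ strictly decreases with step size at least one. As a consequence, the integer bands $(n, n+1]$ containing each positive $D_{j-1}$ are pairwise disjoint, and the corresponding integers $n$ lie in $\{0, 1, \ldots, \lceil s \rceil - 1\}$. Using the monotonicity of $D \mapsto \sup_x \mathbb{P}_x(S_{\tau_1} > D + R)$, each summand in the identity above is bounded by the value of this supremum at the left endpoint of the band containing $D_{j-1}$, and summing over distinct bands yields an upper bound by the telescoping sum $\sum_{n \geq 1} \sup_x \mathbb{P}_x(S_{\tau_1} > n + R)$ (after an index shift absorbing the boundary case $D_{J-1} \in (0, 1]$). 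The main technical obstacle is precisely this discretization-bookkeeping step: one must verify that the unit decrease of the $D_{j-1}$'s genuinely partitions the contributions across distinct integer bands, since a failure of this distinctness would cause the bound to grow linearly with $s$ and defeat the purpose of the lemma, which is to supply a uniform-in-$s$ overshoot control in the subsequent proof of Proposition~\ref{general-tau}(1).
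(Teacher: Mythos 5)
Your proof takes a route genuinely different from the paper's: the paper proceeds by induction on $\lceil s \rceil$, conditioning once at $\tau_1$ and invoking the inductive hypothesis on the restarted problem, whereas you unfold all the ladder times at once and sort the contributions by the integer band containing the current remaining distance $D_{j-1}$. Both implement the same underlying idea, and yours is arguably cleaner. Two points need attention, however. The claim $\tau_s = \tau^{(J)}$ is not literally true: when $D_{J-1} \in (0,1)$, the process may cross level $s$ at a time strictly between $\tau^{(J-1)}$ and $\tau^{(J)}$. What one does have is $S_{\tau_s} \leq S_{\tau^{(J)}}$ (for $\tau^{(J-1)} < n < \tau^{(J)}$ one has $S_n < S_{\tau^{(J-1)}} + 1 \leq S_{\tau^{(J)}}$), so $\{S_{\tau_s} > s + R\} \subseteq \{\Delta_J > D_{J-1} + R\}$ and your displayed ``identity'' holds as an inequality $\leq$, which is all you need.

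The genuine gap is the final ``index shift absorbing the boundary case $D_{J-1} \in (0,1]$'': it is not justified and cannot be supplied. Since $D_0 = s$ and the $D_{j-1}$ decrease by at least $1$, the occupied bands are distinct integers in $\{0, \ldots, \lceil s\rceil - 1\}$, and the band $n = 0$ is occupied precisely when $D_{J-1} \in (0,1]$; your bookkeeping therefore yields $\sum_{n \geq 0} \sup_x \mathbb{P}_x(S_{\tau_1} > n + R)$, and the $n = 0$ term cannot be discarded. Indeed the bound with $\sum_{n\geq 1}$ is false as stated: take $X$ a point, $G = \R$, $\mu = \frac{1}{2}(\delta_{1/2} + \delta_{3/2})$, $s = 2.1$, $R = 0.1$. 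The walk lives on $\frac{1}{2}\Z$, so the overshoot of level $2.1$ is always at least $0.4 > R$ and the left side equals $1$; on the other hand $S_{\tau_1}$ takes the values $1, \frac{3}{2}, 2$ with probabilities $\frac{1}{4}, \frac{1}{2}, \frac{1}{4}$, so $\sum_{n \geq 1}\mathbb{P}(S_{\tau_1} > n + 0.1) = \frac{3}{4} < 1$. (The paper's own inductive proof has the same hole: after the first restart the new level $s' = s - S_{\tau_1}$ can lie in $(0,1]$, outside the range covered by the stated inductive hypothesis.) The discrepancy is harmless for the application --- the overshoot control only needs the right-hand side to vanish as $R \to \infty$, which $\sum_{n\geq 0}$ still does --- but your proof as written should conclude with $\sum_{n\geq 0}$ rather than claim an unexplained index shift.
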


\begin{proof}
\details{Fix $R>0$. Let $n\geq 1$,  and assume $H_{n}$ is a constant such that 
\begin{align}\label{eq-induction}
\sup_{x\in X, s\in (1, n]}\Pb_{x}( S_{\tau_{s}} >s +R ) \leq H_{n}
\end{align}
It is enough to show that 
\begin{align}\label{eq-induction2}
\sup_{x\in X, s\in (1, n+1]}\Pb_{x}( S_{\tau_{s}}>s +R ) \leq \sup_{y\in X}\Pb_{y}( S_{\tau_{1}} >n +R ) +H_{n}
\end{align}

Let $x\in X$, $s\in (n, n+1]$. In the event that $S_{\tau_{s}} >s +R$, the sequence $(S_{k})_{k\geq 1}$ does not meet the interval $[s, s+R]$, in particular we get
\begin{align*}
\Pb_{x}( S_{\tau_{s}} >s +R ) &= \Pb_{x}( S_{\tau_{1}}>s +R )+\Pb_{x}( S_{\tau_{s}} >s +R ;\, S_{\tau_{1}}< s)\\ 
&= \Pb_{x}( S_{\tau_{1}}>s +R ) \\
&\,\,\,+\Pb_{x}( S_{\tau_{s}} >s +R \,|\, S_{\tau_{1}} <s)\Pb_{x}(S_{\tau_{1}} <s)\\
&\leq \Pb_{x}( S_{\tau_{1}} >n +R ) +\Pb_{x}( S_{\tau_{s}} >s +R \,|\, S_{\tau_{1}}<s)
\end{align*}}

Let us bound above the second term.  The cocycle property allows to write for $0\leq k\leq n$
$$S_{n}(b, x)=S_{n-k}(T^k b, \,b_{k}\dots b_{1}x) +S_{k}(b, x)$$ 
where we recall that $T:B\rightarrow B, (b_{i})_{i\geq 1}\mapsto (b_{i+1})_{i\geq 1}$ is the one-sided shift. Applying this to $k=\tau_{1}(b,x)$ and $n=\tau_{s}(b,x)$,    we have
\details{\begin{align*}
\Pb_{x}( S_{\tau_{s}} >s +R \,\,|\,\, S_{\tau_{1}} <s) 
&=  \mathbb{P}_{x}( S_{\tau_{s}-\tau_{1}}(T^{\tau_{1}}b, \,b_{\tau_{1}}\dots b_{1}x) >s +R -  S_{\tau_{1}} \,\,|\,\, S_{\tau_{1}} <s)\\
&= \mathbb{P}_{x}( S_{\tau_{s -S_{\tau_{1}}}}(T^{\tau_{1}}b, \,b_{\tau_{1}}\dots b_{1}x) >s +R -  S_{\tau_{1}} \,\,|\,\, S_{\tau_{1}} <s)
\end{align*}}
As  $S_{\tau_{1}}\geq 1$, the lower bound \details{$s +R -  S_{\tau_{1}}$} is in the  initial segment $[0, n+R]$ so we may apply the hypothesis \eqref{eq-induction} to get
\begin{align*}
\Pb_{x}( S_{\tau_{s}} >s +R \,\,|\,\, S_{\tau_{1}} <s) \leq H_{n}
\end{align*}
In sum, we have proven  for any $x\in X$, $s\in (n, n+1]$, 
 \begin{align*}
\Pb_{x}( S_{\tau_{s}}>s +R ) \leq \Pb_{x}( S_{\tau_{1}} >n +R ) +H_{n}
\end{align*}
and the bound  \eqref{eq-induction2} follows, whence the lemma. 
\end{proof}

\bigskip
We now need to show that the sum $\sum_{n\geq 1} \sup_{x\in X}\Pb_{x}( S_{\tau_{1}}>n +R )$ appearing in \Cref{taus-to-tau1} is small as long as $R$ is large. We first check it is finite.

\begin{lemma}  \label{stoch-dom1} \details{Assume that $\sigma$ satisfies \eqref{drift+}.} Then
$$\sum_{n\geq 1}\sup_{x\in X}\Pb_{x}(S_{\tau_{1}} > n) <\infty$$
\end{lemma}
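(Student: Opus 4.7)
The plan is to combine two ingredients: (i) a pathwise bound on the overshoot $S_{\tau_1}-1$ in terms of the size of the last increment $|b_{\tau_1}|$, and (ii) a Wald-type inequality bounding the tail of $|b_{\tau_1}|$ uniformly in $x$.

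For (i), I first observe that by definition of $\tau_1$ one has $S_{\tau_1-1}<1$ (with the convention $S_0=0$), and the cocycle relation together with the standing bound $|\sigma(g,y)|\leq |g|$ gives
$$S_{\tau_1}(b,x)=S_{\tau_1-1}(b,x)+\sigma(b_{\tau_1},b_{\tau_1-1}\cdots b_1 x)\,\leq\, 1+|b_{\tau_1}|.$$
Consequently $\{S_{\tau_1}>n\}\subseteq \{|b_{\tau_1}|>n-1\}$, so it suffices to bound $\sum_n \sup_x\Pb_x(|b_{\tau_1}|>n-1)$.

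For (ii), I decompose on the value of $\tau_1$. Since the event $\{\tau_1\geq k\}$ depends only on $b_1,\dots,b_{k-1}$ (it is $\mathcal{B}_{k-1}$-measurable), while $b_k$ is independent of $\mathcal{B}_{k-1}$ with law $\mu$:
\begin{align*}
\Pb_{x}(|b_{\tau_1}|>n-1) &= \sum_{k\geq 1}\Pb_{x}(\tau_1=k,\,|b_k|>n-1)\\
&\leq \sum_{k\geq 1}\Pb_x(\tau_1\geq k)\,\mu(|g|>n-1)\\
&= \E_x(\tau_1)\,\mu(|g|>n-1).
\end{align*}
Taking the supremum in $x$ of the right-hand side and summing over $n$ yields
$$\sum_{n\geq 1}\sup_{x\in X}\Pb_x(S_{\tau_1}>n)\,\leq\,\Bigl(\sup_{x\in X}\E_x(\tau_1)\Bigr)\cdot \sum_{n\geq 1}\mu(|g|>n-1).$$

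Both factors on the right are finite: the first is at most $1$ by the Foster-type estimate $\E_x(\tau_s)\leq s$ recalled just above Proposition~A.5 (applied at $s=1$), and the second equals $\sum_{m\geq 0}\mu(|g|>m)\leq 1+\E_\mu(|g|)$, finite by the standing first moment assumption on $\mu$. I do not anticipate a real obstacle; the only delicate points are the independence argument that factors the joint probability $\Pb_x(\tau_1\geq k,\,|b_k|>n-1)$, and the fact that the estimate must be carried out uniformly in $x$ term-by-term in $n$ before summing, since $\sum_n \sup_x$ is in general larger than $\sup_x \sum_n$.
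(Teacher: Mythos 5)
Your proof is correct and is essentially the paper's own argument: you decompose on the value of the stopping time, use that $\{\tau_1\geq k\}$ is $\mathcal{B}_{k-1}$-measurable so $|b_k|$ factors out with law $\mu$, sum to obtain $\E_x(\tau_1)\,\mu(|g|>\cdot)$, and bound $\E_x(\tau_1)\leq 1$ by Foster's criterion. The only cosmetic difference is that you make the pathwise overshoot bound $S_{\tau_1}\leq 1+|b_{\tau_1}|$ explicit up front (giving the cleaner threshold $n-1$ where the paper writes $n-2$), but the decomposition, independence step, and appeal to Foster are identical.
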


\begin{proof} \details{For $x\in X$, $n\geq 1$, using that $|\sigma(g,x)| \leq |g|$ for all $g\in G$ and that $\tau$ is a stopping time,  we have}
\details{\begin{align*}
\Pb_{x}(S_{\tau_{1}} > n) &= \sum_{k} \Pb_{x}(S_{k}>n,\,\,  \tau_{1}=k)\\
&\leq  \sum_{k} \Pb_{x}(S_{k}-S_{k-1}>n-1, \,\,  \tau_{1}=k)\\
&\leq  \sum_{k} \Pb_{x}(|b_{k}|>n-1, \,\,  \tau_{1}=k)\\
&\leq  \sum_{k} \Pb_{x}(|b_{k}|>n-1, \,\,   \tau_{1}\geq k)\\
&= \mu(|g|>n-1) \sum_{k}\Pb_{x}( \tau_{1}\geq k)\\
&= \mu(|g|>n-1) \E_{x}( \tau_{1})
\end{align*}}
 Foster's criterion (see \cite{Foster53} or \cite[Lemma 2.4]{BenardDeSaxce22}) yields the bound $\E_{x}( \tau_{1})\leq 1$. It follows that 
$$\sum_{n\geq 2}\sup_{x\in X}\Pb_{x}(S_{\tau_{1}} > n) \leq \E_{\mu}(|g|)$$
which finishes the proof. 
\end{proof}

We  now conclude the proof of the overshoot estimate in  \Cref{general-tau}.

\begin{proof}[Proof of \Cref{general-tau} (overshoot control)]
The estimate for the supremum involving only $s\in (0, 1]$ follows from \Cref{stoch-dom1}. Indeed, for $R>1$, 
 $$ \sup_{x\in X, s\in (0,1]}\mathbb{P}_{x}(S_{\tau_{s}} >s +R )\leq \sup_{x\in X}\mathbb{P}_{x}(S_{\tau_{1}} >R )  \underset{R\to+\infty}{\longrightarrow} 0$$

Now assume $s> 1$. Using \Cref{taus-to-tau1}, we just need to check that
$$\sum_{n\geq 1} \sup_{x\in X}\Pb_{x}( S_{\tau_{1}} >n +R )  \underset{R\to+\infty}{\longrightarrow} 0$$
 For $R=0$, this sum is finite  by \Cref{stoch-dom1}, and the result follows by dominated convergence (using again that for $n\geq 0$, $\sup_{x\in X}\mathbb{P}_{x}(S_{\tau_{1}} >n+R )$ goes to $0$ as $R$ goes to $+\infty$ by \Cref{stoch-dom1}).
\end{proof}

\bigskip

We now deal with the time estimate in \Cref{general-tau}. 

\begin{proof}[Proof of \Cref{general-tau} (time control)]

 Let $R>0$ be a parameter to be specified below.

 \details{For $s>1$, set $q_{s} = \lfloor s+R\sqrt{s}\rfloor$. We first show that for large $R>0$, we have
  $$\sup_{x\in X, s>1} \mathbb{P}_{x}( \tau_{s}> q_{s} ) \leq \eps/2$$
}Indeed, if $\tau_{s}(b,x) >q_{s}$, then by definition $S_{q_{s}}(b,x)<s $, so $|S_{q_{s}}(b,x) - q_{s}|>R\sqrt{s}$. The central limit theorem for $S_{n}-n$   (\Cref{general-CLT}) guarantees this event only occurs on a set of measure at most $\eps/2$ as long as $R$ is large enough (independently of $x$ and $s>1$), hence the claim.

\details{ For $s>1$, set $q'_{s} = \lfloor s-R\sqrt{s}\rfloor$.  We  now show that for large $R>0$, we have 
 $$\sup_{x\in X, s>1} \mathbb{P}_{x}( \tau_{s}\leq q'_{s} ) \leq \eps/2$$
Using the cocycle property and the Markov property, we see that for every $\alpha\in(0,1)$, there exists $R'>0$ such that for any $x\in X$, $s>1$, 
$$\mathbb{P}_{x}( S_{q'_{s}} \geq s-R'   \,|\, \tau_{s}\leq q'_{s}  )\, \geq \,1- \sup_{y\in X, n\geq 0}\mathbb{P}_{y}(S_{n}\leq -R')  \geq 1-\alpha$$
where the inequality on the very right is justified by the central limit theorem for $S_{n}-n$  (\Cref{general-CLT}) and the choice of a large enough $R'$. }
This leads to 
\begin{align*}
\mathbb{P}_{x}(\tau_{s}\leq q'_{s}  ) &\leq (1-\alpha)^{-1} \mathbb{P}_{x}( S_{q'_{s}} \geq s-R' ) \\
&\leq (1-\alpha)^{-1} \mathbb{P}_{x}( |S_{q'_{s}} -q'_{s}| \geq R\sqrt{s} -R' )
\end{align*}
Choosing $\alpha$ small enough, $R'$ accordingly, and applying the CLT for $S_{n}-n$ from \Cref{general-CLT} bounds the right-hand side by $\eps/2$ for $R,s$ large enough. We can increase $R$ even more to deal with all $s>1$.

\end{proof}

\small

\bibliographystyle{abbrv}

\bibliography{bibliographie}

\begin{thebibliography}{10}

\bibitem{BabillotPeigne06}
M.~Babillot and M.~Peign{\'e}.
\newblock Asymptotic laws for geodesic homology on hyperbolic manifolds with
  cusps.
\newblock {\em Bull. Soc. Math. Fr.}, 134(1):119--163, 2006.

\bibitem{Ben22-TdR}
T.~B\'enard.
\newblock Mesures de radon stationnaires pour une marche al\'eatoire sur
  {$\mathbb{T}^d\times \mathbb{R}$}.
\newblock {\em Annales de l'Institut Fourier (DOI : 10.5802/aif.3519)}, 2022.

\bibitem{Benard21-asympt}
T.~B{\'e}nard.
\newblock Some asymptotic properties of random walks on homogeneous spaces.
\newblock {\em Journal of Modern Dynamics}, 19:161--186, 2023.

\bibitem{BenardBreuillardCLT}
T.~B{\'e}nard and E.~Breuillard.
\newblock The central limit theorem on nilpotent {Lie} groups.
\newblock {\em To appear in Annals of Probability ({arXiv}:2302.06024)}.

\bibitem{BenardBreuillardLLT}
T.~B{\'e}nard and E.~Breuillard.
\newblock Local limit theorems for random walks on nilpotent {Lie} groups.
\newblock {\em Preprint arXiv:2304.14551}, 2023.

\bibitem{BenardDeSaxce22}
T.~B{\'e}nard and N.~De~Saxc{\'e}.
\newblock Random walks with bounded first moment on finite-volume spaces.
\newblock {\em Geom. Funct. Anal.}, 32(4):687--724, 2022.

\bibitem{BQ16-CLTlin}
Y.~Benoist and J.-F. Quint.
\newblock Central limit theorem for linear groups.
\newblock {\em Ann. Probab.}, 44(2):1308--1340, 2016.

\bibitem{BQ16-CLThyp}
Y.~Benoist and J.-F. Quint.
\newblock Central limit theorem on hyperbolic groups.
\newblock {\em Izv. Math.}, 80(1):3--23, 2016.

\bibitem{BQRW}
Y.~Benoist and J.~F. Quint.
\newblock {\em Random Walks on Reductive groups}.
\newblock Springer International Publishing, 2016.

\bibitem{BQ18}
Y.~Benoist and J.-F. Quint.
\newblock On the regularity of stationary measures.
\newblock {\em Isr. J. Math.}, 226(1):1--14, 2018.

\bibitem{BoulMathSertSisto21}
A.~Boulanger, P.~Mathieu, C.~Sert, and A.~Sisto.
\newblock Large deviations for random walks on {Gromov}-hyperbolic spaces.
\newblock {\em Ann. Sci. {\'E}c. Norm. Sup{\'e}r. (4)}, 56(3):885--944, 2023.

\bibitem{Bourdon95}
M.~Bourdon.
\newblock Conformal structure on the boundary and geodesic flow of a
  {{\(\text{CAT}(-1)\)}}-space.
\newblock {\em Enseign. Math. (2)}, 41(1-2):63--102, 1995.

\bibitem{Bourgain12}
J.~Bourgain.
\newblock Finitely supported measures on {{\(\mathrm{SL}_2(\mathbb R)\)}} which
  are absolutely continuous at infinity.
\newblock In {\em Geometric aspects of functional analysis. Proceedings of the
  Israel seminar (GAFA) 2006--2010}, pages 133--141. Berlin: Springer, 2012.

\bibitem{Bowen73}
R.~Bowen.
\newblock Symbolic dynamics for hyperbolic flows.
\newblock {\em Am. J. Math.}, 95:429--460, 1973.

\bibitem{Breuillard05}
E.~Breuillard.
\newblock Local limit theorems and equidistribution of random walks on the
  {Heisenberg} group.
\newblock {\em Geom. Funct. Anal.}, 15(1):35--82, 2005.

\bibitem{Brown71}
B.~Brown.
\newblock Martingale central limit theorem.
\newblock {\em Ann. Math. Stat.}, 42(59-66), 1971.

\bibitem{Choi21-CLT}
I.~Choi.
\newblock Central limit theorem and geodesic tracking on hyperbolic spaces and
  teichm{\"u}ller spaces.
\newblock {\em (arXiv:2106.13017)}, 2021.

\bibitem{ConnellMuchnik07}
C.~Connell and R.~Muchnik.
\newblock Harmonicity of quasiconformal measures and {Poisson} boundaries of
  hyperbolic spaces.
\newblock {\em Geom. Funct. Anal.}, 17(3):707--769, 2007.

\bibitem{ConstLafThom20}
D.~Constantine, J.-F. Lafont, and D.~J. Thompson.
\newblock The weak specification property for geodesic flows on
  {{\(\mathrm{CAT}(-1)\)}} spaces.
\newblock {\em Groups Geom. Dyn.}, 14(1):297--336, 2020.

\bibitem{deAcosta83}
A.~de~Acosta.
\newblock A new proof of the {Hartman}-{Wintner} law of the iterated logarithm.
\newblock {\em Ann. Probab.}, 11:270--276, 1983.

\bibitem{DenkerPhilipp84}
M.~Denker and W.~Philipp.
\newblock Approximation by {Brownian} motion for {Gibbs} measures and flows
  under a function.
\newblock {\em Ergodic Theory Dyn. Syst.}, 4:541--552, 1984.

\bibitem{DKN09}
B.~Deroin, V.~Kleptsyn, and A.~Navas.
\newblock On the question of ergodicity for minimal group actions on the
  circle.
\newblock {\em Mosc. Math. J.}, 9(2):263--303, 2009.

\bibitem{DiaconisHough21}
P.~Diaconis and R.~Hough.
\newblock Random walk on unipotent matrix groups.
\newblock {\em Ann. Sci. {\'E}c. Norm. Sup{\'e}r. (4)}, 54(3):587--625, 2021.

\bibitem{EnriquezFranchiLeJan01}
N.~Enriquez, J.~Franchi, and Y.~Le~Jan.
\newblock Stable windings on hyperbolic surfaces.
\newblock {\em Probab. Theory Relat. Fields}, 119(2):213--255, 2001.

\bibitem{EnriquezLeJan97}
N.~Enriquez and Y.~Le~Jan.
\newblock Statistic of the winding of geodesics on a {Riemann} surface with
  finite area and constant negative curvature.
\newblock {\em Rev. Mat. Iberoam.}, 13(2):377--401, 1997.

\bibitem{Foster53}
F.~G. Foster.
\newblock On the stochastic matrices associated with certain queuing processes.
\newblock {\em Ann. Math. Stat.}, 24:355--360, 1953.

\bibitem{Franchi99}
J.~Franchi.
\newblock Asymptotic singular homology of a complete hyperbolic 3-manifold of
  finite volume.
\newblock {\em Proc. Lond. Math. Soc. (3)}, 79(2):451--480, 1999.

\bibitem{Furstenberg71}
H.~Furstenberg.
\newblock Random walks and discrete subgroups of {Lie} groups.
\newblock Advances {Probab}. related {Topics} 1, 1-63 (1971)., 1971.

\bibitem{Furstenberg73}
H.~Furstenberg.
\newblock Boundary theory and stochastic processes on homogeneous spaces.
\newblock Harmonic {Analysis} homog. {Spaces}, {Proc}. {Sympos}. {Pure} {Math}.
  26, {Williamstown} 1972, 193-229 (1973)., 1973.

\bibitem{GadreMaherTiozzo15}
V.~Gadre, J.~Maher, and G.~Tiozzo.
\newblock Word length statistics and {Lyapunov} exponents for {Fuchsian} groups
  with cusps.
\newblock {\em New York J. Math.}, 21:511--531, 2015.

\bibitem{GekhtmanTiozzo20}
I.~Gekhtman and G.~Tiozzo.
\newblock Entropy and drift for {Gibbs} measures on geometrically finite
  manifolds.
\newblock {\em Trans. Am. Math. Soc.}, 373(4):2949--2980, 2020.

\bibitem{Ghys83}
E.~Ghys and P.~de~la Harpe, editors.
\newblock {\em Sur les groupes hyperboliques d'apr{\`e}s {Mikhael} {Gromov}.
  ({On} the hyperbolic groups {\`a} la {M}. {Gromov})}, volume~83 of {\em Prog.
  Math.}
\newblock Boston, MA: Birkh{\"a}user, 1990.

\bibitem{Guivarch90}
Y.~Guivarc'h.
\newblock Produits de matrices al{\'e}atoires et applications aux
  propri{\'e}t{\'e}s g{\'e}om{\'e}triques des sous-groupes du groupe
  lin{\'e}aire. ({Products} of random matrices and applications to geometric
  properties of subgroups of linear groups).
\newblock {\em Ergodic Theory Dyn. Syst.}, 10(3):483--512, 1990.

\bibitem{GuiLeJ90}
Y.~Guivarc'h and Y.~Le~Jan.
\newblock Sur l'enroulement du flot g{\'e}od{\'e}sique. ({On} the winding of
  the geodesic flow).
\newblock {\em C. R. Acad. Sci., Paris, S{\'e}r. I}, 311(10):645--648, 1990.

\bibitem{GuiLeJ93}
Y.~Guivarc'h and Y.~Le~Jan.
\newblock Asymptotic winding of the geodesic flow on modular surfaces and
  continuous fractions.
\newblock {\em Ann. Sci. {\'E}c. Norm. Sup{\'e}r. (4)}, 26(1):23--50, 1993.

\bibitem{GuivarchLepage16}
Y.~Guivarc'h and {\'E}.~Le~Page.
\newblock Spectral gap properties for linear random walks and {Pareto}'s
  asymptotics for affine stochastic recursions.
\newblock {\em Ann. Inst. Henri Poincar{\'e}, Probab. Stat.}, 52(2):503--574,
  2016.

\bibitem{HochmanSolomyak17}
M.~Hochman and B.~Solomyak.
\newblock On the dimension of {Furstenberg} measure for {{\({SL}_{2}(\mathbb
  {R})\)}} random matrix products.
\newblock {\em Invent. Math.}, 210(3):815--875, 2017.

\bibitem{Hough19}
R.~Hough.
\newblock The local limit theorem on nilpotent {Lie} groups.
\newblock {\em Probab. Theory Relat. Fields}, 174(3-4):761--786, 2019.

\bibitem{KaimanovichLePrince11}
V.~A. Kaimanovich and V.~Le~Prince.
\newblock Matrix random products with singular harmonic measure.
\newblock {\em Geom. Dedicata}, 150:257--279, 2011.

\bibitem{KapovichBenakli02}
I.~Kapovich and N.~Benakli.
\newblock Boundaries of hyperbolic groups.
\newblock In {\em Combinatorial and geometric group theory. Proceedings of the
  AMS special session on combinatorial group theory, New York, NY, USA,
  November 4--5, 2000 and the AMS special session on computational group
  theory, Hoboken, NJ, USA, April 28--29, 2001}, pages 39--93. Providence, RI:
  American Mathematical Society (AMS), 2002.

\bibitem{KatsudaSunada90}
A.~Katsuda and T.~Sunada.
\newblock Closed orbits in homology classes.
\newblock {\em Publ. Math., Inst. Hautes {\'E}tud. Sci.}, 71:5--32, 1990.

\bibitem{Kosenko21}
P.~Kosenko.
\newblock Fundamental inequality for hyperbolic {Coxeter} and {Fuchsian} groups
  equipped with geometric distances.
\newblock {\em Int. Math. Res. Not.}, 2021(6):4709--4728, 2021.

\bibitem{KosenkoTiozzo20}
P.~Kosenko and G.~Tiozzo.
\newblock The fundamental inequality for cocompact fuchsian groups.
\newblock {\em (arXiv:2012.07417)}, 2020.

\bibitem{Lalley86}
S.~P. Lalley.
\newblock Regenerative representation for one-dimensional {Gibbs} states.
\newblock {\em Ann. Probab.}, 14:1262--1271, 1986.

\bibitem{LeJan94}
Y.~Le~Jan.
\newblock The central limit theorem for the geodesic flow on noncompact
  manifolds of constant negative curvature.
\newblock {\em Duke Math. J.}, 74(1):159--175, 1994.

\bibitem{Ledrappier83}
F.~Ledrappier.
\newblock Une r{\'e}lation entre entropie, dimension et exposant pour certaines
  marches al{\'e}atoires.
\newblock {\em C. R. Acad. Sci., Paris, S{\'e}r. I}, 296:369--372, 1983.

\bibitem{LedrappierLessa21}
F.~Ledrappier and P.~Lessa.
\newblock Exact dimension of {Furstenberg} measures.
\newblock {\em (arXiv:2105.11712)}, 2021.

\bibitem{Li18}
J.~Li.
\newblock Decrease of {Fourier} coefficients of stationary measures.
\newblock {\em Math. Ann.}, 372(3-4):1189--1238, 2018.

\bibitem{LiNaudPan21}
J.~Li, F.~Naud, and W.~Pan.
\newblock Kleinian {Schottky} groups, {Patterson}-{Sullivan} measures, and
  {Fourier} decay.
\newblock {\em Duke Math. J.}, 170(4):775--825, 2021.

\bibitem{MaherTiozzo18}
J.~Maher and G.~Tiozzo.
\newblock Random walks on weakly hyperbolic groups.
\newblock {\em J. Reine Angew. Math.}, 742:187--239, 2018.

\bibitem{OtalPeigne04}
J.-P. Otal and M.~Peign{\'e}.
\newblock Variational principle and {Kleinian} groups.
\newblock {\em Duke Math. J.}, 125(1):15--44, 2004.

\bibitem{Patterson76}
S.~J. Patterson.
\newblock The limit set of a {Fuchsian} group.
\newblock {\em Acta Math.}, 136:241--273, 1976.

\bibitem{RandeckerTiozzo21}
A.~Randecker and G.~Tiozzo.
\newblock Cusp excursion in hyperbolic manifolds and singularity of harmonic
  measure.
\newblock {\em J. Mod. Dyn.}, 17:183--211, 2021.

\bibitem{Ratner73CLT}
M.~Ratner.
\newblock The central limit theorem for geodesic flows on {{\(n\)}}-dimensional
  manifolds of negative curvature.
\newblock {\em Isr. J. Math.}, 16:181--197, 1974.

\bibitem{Raugi78}
A.~Raugi.
\newblock Th{\'e}oreme de la limite centrale sur les groupes nilpotents.
\newblock {\em Z. Wahrscheinlichkeitstheor. Verw. Geb.}, 43:149--172, 1978.

\bibitem{Roblin03}
T.~Roblin.
\newblock Ergodicity and equidistribution in negative curvature.
\newblock {\em M{\'e}m. Soc. Math. Fr., Nouv. S{\'e}r.}, 95:96, 2003.

\bibitem{Sinai60}
Y.~G. Sinai.
\newblock The central limit theorem for geodesic flows on manifolds of constant
  negative curvature.
\newblock {\em Sov. Math., Dokl.}, 1:983--987, 1960.

\bibitem{Sullivan79}
D.~Sullivan.
\newblock The density at infinity of a discrete group of hyperbolic motions.
\newblock {\em Publ. Math., Inst. Hautes {\'E}tud. Sci.}, 50:171--202, 1979.

\bibitem{Sullivan84}
D.~Sullivan.
\newblock Entropy, hausdorff measures old and new, and limit sets of geo-
  metrically finite kleinian groups.
\newblock {\em Acta Mathematica}, 153:259--277, 1984.

\bibitem{Tiozzo15}
G.~Tiozzo.
\newblock Sublinear deviation between geodesics and sample paths.
\newblock {\em Duke Math. J.}, 164(3):511--539, 2015.

\bibitem{Waddington96}
S.~Waddington.
\newblock Large deviation asymptotics for {Anosov} flows.
\newblock {\em Ann. Inst. Henri Poincar{\'e}, Anal. Non Lin{\'e}aire},
  13(4):445--484, 1996.

\end{thebibliography}

\bigskip

\noindent Timothée B\'enard

\noindent\textsc{LAGA - Institut Galilée, 99 avenue Jean Baptiste Clément, 93430 Villetaneuse}

\noindent\textit{Email address}: \texttt{benard@math.univ-paris13.fr}

\end{document}